\newtheorem{theorem}{Theorem}[section]
\newtheorem{lemma}[theorem]{Lemma}
\newtheorem{proposition}[theorem]{Proposition}
\theoremstyle{definition}
\newtheorem{remark}[theorem]{Remark}
\numberwithin{equation}{section}
\renewcommand{\div}{\mathrm{div}\,}    
\providecommand{\norm}[1]{\lVert#1\rVert} 
\providecommand{\abs}[1]{\lvert#1\rvert} 
\DeclareMathOperator{\dom}{dom}
\newcommand{\R}{\mathbb{R}}
\newcommand{\N}{\mathbb{N}}
\newcommand{\PP}{\mathbb{P}}
\newcommand{\fa}{\mathfrak{a}}
\newcommand{\fb}{\mathfrak{b}}
\newcommand\restr[2]{{
  \left.\kern-\nulldelimiterspace 
  #1 
  \vphantom{\big|} 
  \right|_{#2} 
  }}
\newcommand{\IP}{\mathbb{P}} 
\newcommand{\IC}{\mathbb{C}} 
\DeclareMathOperator{\divergence}{div}
\newcommand{\e}{\mathrm{e}} 
\newcommand{\eps}{\varepsilon} 
\renewcommand{\d}{\mathrm{d}} 
\title{Nematic Liquid Crystals in Lipschitz domains}
\subjclass[2010]{Primary: 76A15, 76D03; Secondary: 35Q35, 47D06.}
\keywords{Nematic Liquid Crystals, Ericksen--Leslie model, Parabolic equations in Lipschitz domains\\
This work was partly supported by the DFG International Research Training Group IRTG 1529.
The first and third authors are supported by IRTG 1529 at TU Darmstadt.}
\author{Anupam~Pal Choudhury} 
\address{Department of Mathematics,
TU Darmstadt, Schlossgartenstr. 7, 64289 Darmstadt, Germany}
\email{anupampcmath@gmail.com}
\email{hussein@mathematik.tu-darmstadt.de}
\email{tolksdorf@mathematik.tu-darmstadt.de}
\author{Amru Hussein}
\author{Patrick Tolksdorf} 
\begin{document}

\begin{abstract}
\noindent
We consider the simplified Ericksen--Leslie model in three dimensional bounded Lipschitz domains. Applying a semilinear approach, we prove local and global well-posedness (assuming a smallness condition on the initial data) in critical spaces for initial data in $L^3_{\sigma}$ for the fluid and $W^{1,3}$ for the director field. The analysis of such models, so far, has been restricted to domains with smooth boundaries.
\end{abstract}

\maketitle

\section{Introduction}
In this article, we establish a well-posedness theory for the isothermal simplified Ericksen--Leslie model in critical spaces on a bounded Lipschitz domain $\Omega \subset \R^3$. This model describes the flow of nematic liquid crystals and is given by the following system of equations
\begin{align}\label{eq:LCD}
\left\{
\begin{aligned}
\partial_{t} u + (u\cdot \nabla) u -\nu \Delta u + \nabla \pi &= - \lambda \text{div}([\nabla d]^{\top} \nabla d) \quad &\text{in} \ (0,T)\times \Omega,  \\
\partial_{t} d+(u \cdot \nabla) d &= \gamma (\Delta d+ \vert \nabla d \vert^{2} d) \quad &\text{in} \ (0,T)\times \Omega, \\
\div u &= 0 \quad &\text{in} \ (0,T)\times \Omega,\\
\end{aligned}\right.
\end{align}
with initial data $u(0)=a$ and $d(0)=b$. 
Here $u: (0 , T) \times \Omega \rightarrow \mathbb{R}^{3} $ denotes the velocity field of the fluid, $\pi: (0 , T) \times \Omega \rightarrow \mathbb{R} $ the pressure, and
$d: (0 , T) \times \Omega \rightarrow \mathbb{R}^{3} $ denotes the molecular orientation of the liquid crystal at the macroscopic level (we shall also refer to this as the director field). This physical interpretation of $d$ imposes the condition
\begin{equation}
\vert d \vert=1 \quad \text{in} \quad (0,T)\times \Omega.
\label{abs}
\end{equation}
We shall therefore further assume that $\vert d(0) \vert= \vert b \vert=1 \ \text{in}\ \Omega $.
The constant $\nu > 0$ represents the viscosity, the constant $\gamma > 0 $ represents the microscopic elastic relaxation time for the molecular orientation field $d$, and the constant $\lambda>0 $ encodes the  competition between the kinetic and potential energies. 
Without loss of generality, we shall restrict ourselves to the case $\lambda=\gamma=\nu=1$. This system is complemented with suitable boundary conditions for $u$ and $d$. The velocity field will always be assumed to satisfy no-slip boundary conditions
\begin{align*}
 u = 0 \quad \text{on} \quad (0 , T) \times \partial \Omega,
\end{align*}
and the director field either satisfies homogeneous Neumann boundary conditions 
\begin{align}\label{bc:Neumann}
 \partial_n d = 0  \quad \text{on} \quad (0 , T) \times \partial \Omega,
\end{align}
or it is assumed that the alignment of $d$ on the boundary is prescribed by a constant unit vector $e \in \mathbb{S}^2$, i.e.,
\begin{align}\label{bc:Dirichlet}
\quad d = e \quad \text{on} \quad (0 , T) \times \partial \Omega.
\end{align}
Here, $\partial_n d$ denotes the normal derivative of $d$ to $\partial \Omega$. Notice that both these types of boundary conditions for $d$ are physically relevant and have been investigated in smooth domains~\cite{Li_Wang},~\cite{Hieber_et_al},~\cite{Li}, and~\cite{Lin_Lin_Wang}.\par
After the continuum theory of liquid crystals was developed by Ericksen~\cite{Ericksen} and Leslie~\cite{Leslie} in the 1960's, a first simplified model (which is a slightly modified version of~\eqref{eq:LCD}) was considered by Lin and Liu~\cite{Lin_Liu} in 1995. In the case of bounded and smooth domains the above mentioned system was considered by Li~\cite{Li} subject to Dirichlet boundary conditions for $d$; subject to Neumann boundary conditions it was investigated by Li and Wang~\cite{Li_Wang} and by Hieber, Nesensohn, Pr\"uss, and Schade~\cite{Hieber_et_al}. While the two latter treatments rely both on maximal regularity estimates for the Stokes operator and the Neumann Laplacian, they differ in their underlying philosophy. Namely, Li and Wang treat it as a semilinear problem and Hieber \textit{et al.\@} advertise the quasilinear approach. Detailed information on liquid crystals including their history and further references can be found in the books by Sonnet and Virga~\cite{Virga2012} and Virga~\cite{Virga1994}. Recent developments are discussed by Hieber and Pr\"uss in the survey~\cite{HieberPruess2016}. \par
In this work, we shall view the simplified Ericksen--Leslie model as a semilinear equation and treat it by the semigroup method presented for example by Giga~\cite{Giga}, Giga and Miyakawa~\cite{GigaMiyakawa1985}, and Kato~\cite{Kato}. For instance in the case of Neumann boundary conditions for $d$, this means that all nonlinear terms are considered as a ``right-hand side'' and that we shall construct mild solutions
\begin{align*}
 u(t) &= e^{- t A} a - \int_0^t e^{- (t - s) A} \IP \big\{ (u (s) \cdot \nabla) u (s) + \text{div}([\nabla d (s)]^{\top} \nabla d (s)) \big\} \; \d s , \\
 d(t) &= e^{- t B} b - \int_0^t e^{- (t - s) B} \big\{ (u(s) \cdot \nabla) d(s) - \lvert \nabla d(s) \rvert^2 d(s) \big\} \; \d s
\end{align*}
by virtue of an iteration scheme. Here, the ``fluid equation'', i.e., the first equation of~\eqref{eq:LCD}, is projected onto the solenoidal vector fields by using the Helmholtz projection $\IP$ and $A$ denotes the Stokes operator; $- B$ denotes the Neumann Laplacian. As the underlying domain is only Lipschitz, there are profound constraints concerning the regularity of the involved operators. For example the Helmholtz projection on $L^p$ as well as the Stokes semigroup on $L^p_{\sigma}$ exist only for $3 / 2 - \eps < p < 3 + \eps$ and some $\eps = \eps(\Omega) > 0$, see Fabes, Mendez, and Mitrea~\cite{Fabes_Mendez_Mitrea}, Shen~\cite{Shen}, and Deuring~\cite{Deuring}. Another point is, that one cannot expect the domains of the operators $A$ and $B$ to embed into $W^{2 , p}$ for any $p > 1$, as this property is in general wrong for the Laplacian, see Dahlberg~\cite{Dahlberg} and Jerison and Kenig~\cite{Jerison-Kenig}. Firstly, this shows that one cannot expect an $L^p$-result for $p \geq 3 + \eps$ and secondly, this directly leads to problems of how to interpret the mild solutions above, as the Stokes semigroup is applied to two derivatives of $d$. \par
To circumvent this problem, we shall write $(u(s) \cdot \nabla ) u(s)$ as $\text{div} (u(s) \otimes u(s))$ and then consider
\begin{align*}
 e^{- (t - s) A} \IP \text{div}
\end{align*}
as one composite operator on $L^p$. That this is well-defined for $3 / 2 - \eps < p < 3 + \eps$ and a bounded Lipschitz domain $\Omega$, was proven by the third author in~\cite{Tolksdorf}. To prove convergence of the iteration scheme, it will be important that $u(s) \otimes u(s)$ and $[\nabla d(s)]^{\top} \nabla d(s)$ exhibit the same decay rate in the time variable, because in this case, both integrands in the mild formulation of $u$ behave similarly with respect to the time variable. Since the constructed solutions $u$ and $d$ will be perturbations of the solutions $e^{- t A} a$ and $e^{- t B} b$ to the linearised equations, we need to impose conditions on $b$ such that $\nabla e^{- t B} b$ has the same time decay as
\begin{align*}
 \| e^{- t A} a \|_{L^q_{\sigma} (\Omega)} \leq C t^{- \frac{3}{2} (\frac{1}{p} - \frac{1}{q})} \| a \|_{L^p_{\sigma} (\Omega)} \qquad (\tfrac{3}{2} - \eps < p \leq q < 3 + \eps).
\end{align*}
Since $B$ satisfies the square root property $\| \nabla f \|_{L^p} \simeq \| B^{1 / 2} f \|_{L^p}$ for $\frac{3}{2} - \eps < p < 3 + \eps$ and $f \in \dom(B^{1 / 2})$, one has the same time decay
\begin{align*}
\| \nabla e^{- t B} b \|_{L^q (\Omega)^{3 \times 3}} \leq C t^{- \frac{3}{2} (\frac{1}{p} - \frac{1}{q})} \| B^{1 / 2} b \|_{L^p (\Omega)^3} \qquad (\tfrac{3}{2} - \eps < p \leq q < 3 + \eps),
\end{align*}
whenever $b \in \dom(B^{1 / 2}) = W^{1 , p} (\Omega)^3$. \par
This leads us to an informal formulation of our main results. In Theorems~\ref{thm:main} and~\ref{Dirichlet-main}, 
we prove local existence of mild solutions to the simplified Ericksen--Leslie model for initial data $a \in L^p_{\sigma} (\Omega)$ and $b \in W^{1 , p} (\Omega)^3$ with $\lvert b \rvert = 1$ and every $3 \leq p < 3 + \eps$ for some $\eps > 0$. If $a$ and $\nabla b$ are sufficiently small in $L^p$, then the solutions are global. Especially, the solutions satisfy
\begin{align*}
 u \in BC ([0 , T) ; L^p_{\sigma} (\Omega)), \quad \nabla d \in BC ([0 , T) ; L^p(\Omega)^{3 \times 3}), \quad \text{and} \quad d \in BC ([0 , T) ; L^{\infty} (\Omega)^3)
\end{align*}
(actually, $d$ satisfies $\lvert d (t) \rvert = 1$ for all times). In the case $p = 3$, the norms of the spaces above are invariant under the natural scaling of the equation, i.e., if $u$, $d$, and $\pi$ are solutions to~\eqref{eq:LCD} and $\alpha > 0$, then so are
\begin{align*}
 u_{\alpha} (t , x) := \alpha u(\alpha^2 t , \alpha x), \quad d_{\alpha} (t , x) := d (\alpha^2 t , \alpha x), \quad \text{and} \quad \pi_{\alpha} (t , x) := \alpha^2 \pi (\alpha^2 t , \alpha x)
\end{align*}
(on a dilated domain and time interval). Thus, we establish a well-posedness theory for the simplified Ericksen--Leslie model in critical spaces. Furthermore, we prove that under certain conditions the mild solutions are unique. Having these solutions at hand, we proceed by regarding the nonlinearities as ``right-hand sides'' and use the theory of maximal regularity to prove additional regularity properties of the solutions. Finally, we would like to stress, that this is the first time that a well-posedness theory for the simplified Ericksen--Leslie model is established on a bounded Lipschitz domain and that we prove existence results for certain initial data spaces that are even unknown in the smooth case. To the best of our knowledge, well-posedness results in critical spaces have been obtained only for the full space $\R^3$, see~\cite{Hineman2013, Hineman2014, LinDing2012}. \par
The article is organised as follows. In Section~\ref{Sec: Preliminaries}, we introduce some of the basic tools and notations. The main results are stated in Section~\ref{Sec: Main results} and the iteration scheme (for Neumann boundary conditions for $d$) is performed in~\ref{Sec: Proof of Mild solvability}. In Section~\ref{Sec: Regularity} we prove regularity of the corresponding solutions and then, in Section~\ref{Sec: Dirichlet boundary conditions}, we outline the changes of the proof for Dirichlet boundary conditions. We close the article in Section~\ref{Sec: Comparison} with a comparison of our results in the smooth setting with previously known results.

\section{Preliminaries}
\label{Sec: Preliminaries}
In this section, we collect some preliminary results which shall be used time and again in the rest of the article. \par
For the whole article, $\Omega \subset \R^3$ will be a bounded Lipschitz domain, by which we mean that the boundary can locally be described by the graph of a Lipschitz continuous function. The space dimension of the underlying Euclidean space is always fixed to three. Integration will always be performed with respect to the Lebesgue measure. For two vectors $x , y \in \R^3$ we denote by $x \otimes y$ the matrix that arises by carrying out the matrix multiplication $x y^{\top}$, where the superscript $^{\top}$ denotes the transpose of a matrix. For a linear operator $C$ defined on a Banach space $X$, we denote its domain by $\dom(C) \subset X$ and its range by $\mathrm{Rg} (C)$. \par
Define the space of all solenoidal, smooth, and compactly supported vector fields by $C_{c , \sigma}^{\infty} (\Omega)$. Then, for $1 < q < \infty$, we denote by
\begin{align*}
 L^q_{\sigma} (\Omega) := \overline{C_{c , \sigma}^{\infty} (\Omega)}^{L^q} \quad \text{and} \quad W^{1 , q}_{0 , \sigma} (\Omega) := \overline{C_{c , \sigma}^{\infty} (\Omega)}^{W^{1 , q}}
\end{align*}
the $L^q$-space and the first-order Sobolev space of solenoidal vector fields. Moreover, for $q^{\prime}$ being the H\"older conjugate exponent to $q$, we define $W^{-1 , q}_{\sigma} (\Omega) := (W^{1 , q^{\prime}}_{0 , \sigma} (\Omega))^*$, where the $^*$ indicates that the antidual space was taken. Finally, for a Banach space $X$ and an interval $I \subset \R$, we denote by $BC (I ; X)$ the space of all bounded and continuous functions endowed with the supremum norm and we will denote the space of all average-free $L^q$-functions by
\begin{align*}
 L^q_0 (\Omega)^3 := \Big\{ d \in L^q(\Omega)^3 \mid \int_{\Omega} d \; \d x = 0 \Big\}.
\end{align*}
Recall that it was proven by Fabes, Mendez, and Mitrea in~\cite{Fabes_Mendez_Mitrea} that for each Lipschitz domain $\Omega \subset \R^3$, there exists $\eps > 0$ such that the Helmholtz projection $\IP$ from $L^q(\Omega)^3$ onto $L^q_{\sigma} (\Omega)$ is a bounded operator, whenever $\lvert 1 / q - 1 / 2 \rvert < 1 / 6 + \eps$. In this case, one can canonically identify $L^q_{\sigma} (\Omega)$ with the antidual space $(L^{q^{\prime}}_{\sigma} (\Omega))^*$. This means, that for every $f \in (L^{q^{\prime}}_{\sigma} (\Omega))^*$ there exists a unique $g \in L^q_{\sigma} (\Omega)$ such that
\begin{align*}
 \langle f , u \rangle_{[L^{q^{\prime}}]^*, L^{q^{\prime}}} = \int_{\Omega} g (x) \cdot \overline{u (x)} \; \d x \quad (u \in L^{q^{\prime}}_{\sigma} (\Omega)),
\end{align*}
and we denote the corresponding isomorphism by $\Phi : (L^{q^{\prime}}_{\sigma} (\Omega))^* \to L^q_{\sigma} (\Omega)$. \par
The Stokes operator and the Neumann Laplacian are defined by means of Kato's form method as follows. Define the sesquilinear forms
\begin{align*}
 \fa : W^{1 , 2}_{0 , \sigma} (\Omega) \times W^{1 , 2}_{0 , \sigma} (\Omega) \to \IC, \quad &(u , v) \mapsto \int_{\Omega} \nabla u \cdot \overline{\nabla v} \; \d x, \\
 \fb : W^{1 , 2} (\Omega)^3 \times W^{1 , 2} (\Omega)^3 \to \IC, \quad &(u , v) \mapsto \int_{\Omega} \nabla u \cdot \overline{\nabla v} \; \d x,
\end{align*}
and let the Stokes operator $A_2$ be the $L^2_{\sigma} (\Omega)$-realisation of $\fa$ and the negative Neumann Laplacian $B_2$ be the $L^2 (\Omega)^3$-realisation of $\fb$. For $1 < q < \infty$, the Stokes operator $A_q$ is either defined as the part of $A_2$ in $L^q_{\sigma} (\Omega)$ (if $q > 2$) or as the closure of $A_2$ in $L^q_{\sigma} (\Omega)$ (if $q < 2$) whenever the closure exists. In the same way, we define the Neumann Laplacian $B_q$ on $L^q (\Omega)^3$. Note that the respective operators are closable in the case $q < 2$ if and only if $A_{q^{\prime}}$ (or $B_{q^{\prime}}$) is densely defined, for a proof, see, e.g.,~\cite[Lem.~2.8]{Tolksdorf_Higher-order}. Moreover, if one of these conditions apply, then
\begin{align*}
 \langle A_q u , v \rangle_{L^q_{\sigma} , L^{q^{\prime}}_{\sigma}} = \langle u , A_{q^{\prime}} v \rangle_{L^q_{\sigma} , L^{q^{\prime}}_{\sigma}} \quad (u \in \dom(A_q) , v \in \dom(A_{q^{\prime}}))
\end{align*}
and
\begin{align*}
 \langle B_q u , v \rangle_{L^q , L^{q^{\prime}}} = \langle u , B_{q^{\prime}} v \rangle_{L^q , L^{q^{\prime}}} \quad (u \in \dom(B_q) , v \in \dom(B_{q^{\prime}})).
\end{align*}

The following result of Shen shows that $-A_q$ generates an exponentially stable analytic semigroup on $L^q_{\sigma} (\Omega)$ whenever $\lvert 1 / q - 1 / 2 \rvert < 1 / 6 + \eps$, in particular, this implies that the Stokes operator $A_q$ is closed and densely defined for $q$ is this range.

\begin{proposition}[see~\cite{Shen}]
\label{Prop: Shen's theorem}
For every bounded Lipschitz domain $\Omega \subset \R^3$ there exists $\eps > 0$ such that for all $q$ satisfying $\lvert 1 / q - 1 / 2 \rvert < 1 / 6 + \eps$, the operator $- A_q$ generates an exponentially stable analytic semigroup on $L^q_{\sigma} (\Omega)$.
\end{proposition}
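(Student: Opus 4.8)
This is Shen's theorem~\cite{Shen}; here we only outline the strategy. The plan is to deduce the generation statement from a sectorial resolvent estimate: there are $\theta \in (0 , \pi / 2)$ and $C > 0$ such that
\begin{align*}
 \norm{(\lambda + A_q)^{-1} f}_{L^q_{\sigma} (\Omega)} \leq \frac{C}{\lvert \lambda \rvert} \norm{f}_{L^q_{\sigma} (\Omega)}, \qquad f \in L^q_{\sigma} (\Omega), \quad \lambda \in \Sigma_{\theta},
\end{align*}
where $\Sigma_{\theta} := \{ z \in \IC \setminus \{ 0 \} : \lvert \arg z \rvert < \pi - \theta \}$, valid uniformly for all $q$ with $\lvert 1 / q - 1 / 2 \rvert < 1 / 6 + \eps$. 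Granting this estimate, the standard characterisation of generators of bounded analytic semigroups applies; the non-empty resolvent set makes $A_q$ closed, and $C_{c , \sigma}^{\infty} (\Omega) \subseteq \dom(A_q)$ yields a dense domain, so $- A_q$ indeed generates a bounded analytic semigroup on $L^q_{\sigma} (\Omega)$.

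The case $q = 2$ is classical. By construction $A_2$ is the non-negative self-adjoint operator associated via Kato's form method with the closed, densely defined, symmetric form $\fa$, and the resolvent bound for $q = 2$ is the elementary estimate for sectorial self-adjoint operators. Since $\Omega$ is bounded, the Poincar\'e inequality yields $\fa (u , u) = \norm{\nabla u}_{L^2}^2 \geq \delta_0 \norm{u}_{L^2}^2$ for all $u \in W^{1 , 2}_{0 , \sigma} (\Omega)$ and some $\delta_0 > 0$; hence $A_2 \geq \delta_0$, so $0$ lies in the resolvent set and $\norm{\e^{- t A_2}} \leq \e^{- \delta_0 t}$.

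To pass from $q = 2$ to $q$ in a one-sided neighbourhood $2 \leq q < 3 + \eps$, I would invoke the real-variable method of Caffarelli--Peral and Shen. Writing $u = (\lambda + A_q)^{-1} f$, the pair $(u , \pi)$ solves the resolvent Stokes system $\lambda u - \Delta u + \nabla \pi = f$, $\div u = 0$ in $\Omega$, $u = 0$ on $\partial \Omega$, and the aim is $\norm{\lambda u}_{L^q} \leq C \norm{f}_{L^q}$. The decisive local ingredient is a weak reverse H\"older inequality: there is an exponent $p_0 > 2$ such that every weak solution of the homogeneous system ($f = 0$) on a ball $B = B (x_0 , r)$ with $r \lesssim \lvert \lambda \rvert^{- 1 / 2}$ --- including the boundary case, where in addition $u = 0$ on $\partial \Omega \cap B$ --- has its $L^{p_0}$-average over $B$ controlled by its $L^2$-average over $2 B$, with a constant independent of $r$ and of $\lambda \in \Sigma_{\theta}$. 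Interior balls are handled by standard interior regularity for the Stokes system, while the boundary estimate rests on the sharp $L^p$-solvability theory for the stationary Stokes (and, through it, the Laplace) problem on Lipschitz domains; this is precisely where the exponent $3 + \eps$ in dimension three originates. Feeding this reverse H\"older inequality together with the $L^2$ bound into Shen's real-variable extrapolation theorem produces the resolvent estimate for all $2 \leq q < p_0$. The range $q < 2$ is then recovered by duality, since the adjoint of $(\lambda + A_q)^{-1}$ on $L^q_{\sigma} (\Omega)$ equals $(\overline{\lambda} + A_{q'})^{-1}$ on $L^{q'}_{\sigma} (\Omega)$ (using the identification of $L^q_{\sigma} (\Omega)$ with $(L^{q'}_{\sigma} (\Omega))^*$ recorded above), so the estimate for $q' > 2$ transfers to $q < 2$.

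Finally, exponential stability for every admissible $q$ follows by combining analyticity with the spectral picture: the operators $(\lambda + A_q)^{-1}$ form a consistent family, so $\sigma (A_q)$ is independent of $q$ and, by the $q = 2$ analysis, contained in $\{ \Re z \geq \delta_0 \}$; a bounded analytic semigroup whose generator has spectrum bounded away from the imaginary axis is exponentially stable. The principal obstacle is the boundary reverse H\"older estimate: establishing it requires the sharp boundary regularity theory for the Stokes system on Lipschitz domains, and it is exactly this ingredient that both fixes the admissible range of $q$ and makes the whole argument close.
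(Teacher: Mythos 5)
The paper does not prove this proposition; it is quoted verbatim as Shen's theorem, with a citation to~\cite{Shen}, so there is no internal proof to compare against. Your outline is a faithful sketch of the route taken in Shen's paper: the $L^2$ case via the form method and Poincar\'e, the weak reverse H\"older inequality on Whitney--scaled balls (interior by Stokes interior regularity, boundary by the sharp $L^p$-solvability theory for the stationary Stokes/Laplace problems on Lipschitz domains — which is indeed where the exponent $3 + \eps$ comes from in dimension three), extrapolation via Shen's real-variable argument to $2 \le q < p_0$, and duality for $q < 2$. One point in the final paragraph is stated more strongly than is warranted: consistency of the resolvent family on $\rho(A_2) \cap \rho(A_q)$ does \emph{not} by itself give $\sigma(A_q) = \sigma(A_2)$, so "the spectrum is independent of $q$" is not automatic. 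What one actually needs for exponential stability is only $0 \in \rho(A_q)$, which is easier: on a bounded domain the resolvent $(I + A_q)^{-1}$ maps into $W^{1,q}_{0,\sigma}(\Omega)$ and is therefore compact by Rellich--Kondrachov, so $\sigma(A_q)$ is discrete and consists of eigenvalues; any eigenfunction lies in $L^2_\sigma(\Omega)$ by the embedding (for $q > 2$; the case $q < 2$ then follows by duality), and hence is an eigenfunction of $A_2$, so $0 \in \rho(A_2)$ forces $0 \in \rho(A_q)$. Once $0 \in \rho(A_q)$ and $-A_q$ generates a bounded analytic semigroup, exponential stability follows by the standard shift argument. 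Apart from this imprecision the outline is correct.
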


The heat semigroup generated by the Neumann Laplacian has the following properties on bounded Lipschitz domains.

\begin{proposition}\label{prop:Linfty}
For all $q \in (1 , \infty)$ the operator $- B_q$ is the generator of a bounded analytic contraction semigroup $(e^{-t B_{q}})_{t \geq 0}$ on $L^q(\Omega)^3$ and for $q=\infty$, $(e^{-t B_q})_{t \geq 0}$ is contractive, i.e.,
\begin{align*}
\norm{e^{-t B_{\infty}}d}_{L^{\infty}(\Omega)^3} \leq \norm{d}_{L^{\infty}(\Omega)^3} \quad \hbox{for all } d\in L^{\infty}(\Omega)^3.
\end{align*}
\end{proposition}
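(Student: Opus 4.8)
The plan is to deduce everything from the classical $L^2$-theory of the form $\fb$ together with submarkovianity, so that the Lipschitz regularity of $\Omega$ is not needed. First note that $\fb$ acts componentwise: it is three copies of the scalar Neumann gradient form $f \mapsto \int_\Omega \lvert \nabla f \rvert^2 \, \d x$ on $W^{1,2}(\Omega)$. This scalar form is symmetric, nonnegative, densely defined and closed (since $W^{1,2}(\Omega) \hookrightarrow L^2(\Omega)$ continuously and $W^{1,2}(\Omega)$ is complete), hence its $L^2$-realisation $-\Delta_N$ is nonnegative and self-adjoint, and by the spectral theorem it generates a contraction semigroup $(S_t)_{t \geq 0}$ on $L^2(\Omega)$ that is bounded and analytic of angle $\pi/2$. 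Consequently $B_2$ is self-adjoint and nonnegative, and $e^{-tB_2}$ acts as $S_t$ in each component.

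Next I would show that $(S_t)_{t\geq 0}$ is submarkovian; this is the one step carrying genuine content. One verifies the Beurling--Deny type criterion (in the form due to Ouhabaz), namely that for real-valued $f \in W^{1,2}(\Omega)$ the truncation $(0 \vee f) \wedge 1$ again lies in $W^{1,2}(\Omega)$ and $\int_\Omega \nabla\bigl[(0 \vee f) \wedge 1\bigr] \cdot \nabla\bigl(f - (0 \vee f) \wedge 1\bigr) \, \d x \geq 0$, which is a pointwise computation of the gradients on the level sets $\{f<0\}$, $\{0\le f\le 1\}$, $\{f>1\}$. Hence $(S_t)_{t\geq 0}$ is positivity preserving and $L^\infty(\Omega)$-contractive; by self-adjointness and duality it is then $L^1(\Omega)$-contractive, and Riesz--Thorin interpolation yields a consistent family of positive contraction semigroups $(S_t)_{t\geq 0}$ on $L^q(\Omega)$ for every $1 \le q \le \infty$. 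By the standard theory of symmetric submarkovian semigroups (Stein, Ouhabaz) this family is strongly continuous, bounded and analytic on a sector for each $1 < q < \infty$.

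To pass to the vector-valued statement I would use the domination estimate $\lvert e^{-tB_2} d \rvert \le S_t \lvert d \rvert$ pointwise a.e., valid because $e^{-tB_2}$ acts componentwise by the positive operator $S_t$: for any unit $\xi \in \IC^3$ one has $\xi \cdot (e^{-tB_2}d) = S_t(\xi\cdot d)$ and $\lvert \xi\cdot d\rvert \le \lvert d\rvert$, so $\lvert \xi\cdot(e^{-tB_2}d)\rvert \le S_t\lvert d\rvert$, and taking the supremum over $\xi$ gives the claim. Hence, for all $1 \le q \le \infty$,
\[
 \lVert e^{-tB_2} d \rVert_{L^q(\Omega)^3} = \bigl\lVert\, \lvert e^{-tB_2}d \rvert \,\bigr\rVert_{L^q(\Omega)} \le \lVert S_t\lvert d\rvert\rVert_{L^q(\Omega)} \le \bigl\lVert\, \lvert d\rvert\,\bigr\rVert_{L^q(\Omega)} = \lVert d\rVert_{L^q(\Omega)^3},
\]
which in particular gives the asserted $q=\infty$ contraction (strong continuity is not claimed there and in general fails). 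For $1 < q < \infty$ the semigroup $(e^{-tB_q})_{t\geq 0}$ obtained componentwise from $(S_t)_{t\geq 0}$ is a bounded analytic contraction semigroup on $L^q(\Omega)^3$, and it remains to identify its generator with $B_q$ as defined in the paper: for $q > 2$ it is the part of $B_2$ in the continuously and densely embedded invariant subspace $L^q(\Omega)^3$, and for $q < 2$ it is the adjoint of the $L^{q'}$-generator, hence the $L^q$-closure of $B_2$, which therefore exists, in accordance with the remark that $B_{q'}$, $q' > 2$, is densely defined.

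The hard part is really only the submarkovianity of the scalar Neumann semigroup, i.e.\ that the gradient form is a Dirichlet form on $W^{1,2}(\Omega)$; once that is in place, the extrapolation to all $L^q$, the analyticity for $1 < q < \infty$, the vector-valued domination, and the identification of generators are routine. If one prefers, this step can simply be quoted: it is classical that the Neumann Laplacian on an arbitrary open subset of $\R^3$ generates a submarkovian $C_0$-semigroup (see, e.g., Davies or Ouhabaz).
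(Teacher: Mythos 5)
Your proof is correct and follows essentially the same route as the paper's, which simply invokes Davies~\cite{Davies} (Theorems~1.3.9, 1.3.3, and~1.4.2) for the Beurling--Deny criterion, the resulting $L^q$-contractivity by interpolation, and the analyticity on $L^q$ for $1<q<\infty$. You supply the same chain of ideas but make explicit the reduction from the vector-valued operator $B_2$ to the scalar Neumann semigroup $(S_t)_{t\ge 0}$ via the pointwise domination $\lvert e^{-tB_2}d\rvert\le S_t\lvert d\rvert$, together with the identification of generators, details the paper leaves implicit when it applies Davies's scalar theorems to the diagonal operator $B_2$.
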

\begin{proof}
By~\cite[Thm.~1.3.9]{Davies}, $B_{2}$ satisfies the so-called Beurling--Deny conditions. In this case,~\cite[Thm.~1.3.3]{Davies} implies that $(e^{-t B_{q}})_{t \geq 0}$ is a semigroup of contractions on $L^q (\Omega)^3$ for $1 \leq q \leq \infty$. The analyticity for $q \in (1 , \infty)$ follows from~\cite[Thm.~1.4.2]{Davies}.
\end{proof}

Since the operator domains $\dom(A_q)$ and $\dom(B_q)$ are nested for decreasing $q$, the corresponding semigroups define a consistent family of operators, $e^{- t A_q}|_{L^p_{\sigma} (\Omega)} = e^{- t A_p}$ and $e^{- t B_q}|_{L^p (\Omega)^3} = e^{- t B_p}$ for $p > q$. Thus, if no ambiguity is expected, we will follow the standard convention and skip the subscript $_q$ henceforth and simply write $A$ and $B$. The following result characterises the domains of the square roots of $A$ and $B$ defined above.

\begin{proposition}[see~\cite{Tolksdorf} and~\cite{Jerison-Kenig}]\label{prop:sqrt}
\label{Prop: Square root properties}
Let $\Omega\subset \R^3$ be a bounded Lipschitz domain. Then there exists an $\eps > 0$ such that for all $\lvert 1 / q - 1 / 2 \rvert < 1 / 6 + \eps$, 
\begin{itemize}
\item[(a)] one has with equivalent norms
$$\dom(A^{\frac{1}{2}})= W_{0 , \sigma}^{1,q}(\Omega),$$  
\item[(b)] one has 
$$\dom(B^{\frac{1}{2}})= W^{1,q}(\Omega)^3$$
and there exists a constant $C > 0$ such that
$$ C^{-1} \| \nabla u \|_{L^q (\Omega)^{3 \times 3}} \leq \| B^{\frac{1}{2}} u \|_{L^q(\Omega)^3} \leq C \| \nabla u \|_{L^q(\Omega)^{3 \times 3}}  \quad (u \in W^{1 , q} (\Omega)^3).$$  
\end{itemize}
\end{proposition}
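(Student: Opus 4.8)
The plan is to bootstrap from the Hilbert space case $q=2$, which is classical, to the full range $\lvert 1/q - 1/2\rvert < 1/6 + \eps$ by invoking the sharp $L^q$-regularity theory for the Laplacian and the Stokes system on Lipschitz domains. For $q=2$ the forms $\fa$ and $\fb$ are densely defined, symmetric, nonnegative and closed --- being the restrictions of the Dirichlet form $v \mapsto \int_\Omega \lvert \nabla v\rvert^2 \,\d x$ to the closed subspaces $W^{1,2}_{0,\sigma}(\Omega)$ and $W^{1,2}(\Omega)^3$ of $L^2$ --- so $A_2$ and $B_2$ are nonnegative self-adjoint operators, and the form representation theorems give
\begin{align*}
 \dom(A_2^{1/2}) = W^{1,2}_{0,\sigma}(\Omega), \quad \|A_2^{1/2} u\|_{L^2} = \|\nabla u\|_{L^2} \quad (u \in W^{1,2}_{0,\sigma}(\Omega)),
\end{align*}
and likewise $\dom(B_2^{1/2}) = W^{1,2}(\Omega)^3$ with $\|B_2^{1/2} u\|_{L^2} = \|\nabla u\|_{L^2}$. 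This is (a) and (b) for $q=2$ with $C=1$.

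For part (b) and general $q$, the Neumann Laplacian acts componentwise, so it suffices to consider a scalar function. One must then show that the Riesz transform $\nabla B^{-1/2}$ and its reverse are bounded on $L^q(\Omega)$ exactly when $\lvert 1/q - 1/2\rvert < 1/6 + \eps$; equivalently, that $B^{1/2}$ maps onto $W^{1,q}(\Omega)$. This is precisely the resolution of the square root problem for the Neumann Laplacian on a bounded Lipschitz domain in $\R^3$ due to Jerison and Kenig~\cite{Jerison-Kenig}, resting on their $L^q$-solvability of the inhomogeneous Neumann problem, for which the range $3/2 - \eps < q < 3 + \eps$ is optimal in three dimensions. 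The one-dimensional kernel of $B$ (the constants) causes no trouble: on a constant both sides of the asserted inequality vanish, and the graph norm $\|u\|_{L^q} + \|B^{1/2}u\|_{L^q}$ is then equivalent to $\|u\|_{L^q} + \|\nabla u\|_{L^q} = \|u\|_{W^{1,q}}$. Applying this componentwise yields (b).

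For part (a) and general $q$, I would transfer the square root estimate from the Dirichlet Laplacian $-\Delta_D$ --- for which Jerison and Kenig~\cite{Jerison-Kenig} likewise give $\dom((-\Delta_D)^{1/2}) = W^{1,q}_0(\Omega)$ with $\|(-\Delta_D)^{1/2} v\|_{L^q} \simeq \|\nabla v\|_{L^q}$ in the same range --- through the Helmholtz projection $\IP$, which by Fabes--Mendez--Mitrea~\cite{Fabes_Mendez_Mitrea} is bounded on $L^q(\Omega)^3$ for $\lvert 1/q - 1/2\rvert < 1/6 + \eps$. On $W^{1,2}_{0,\sigma}(\Omega)$ the form $\fa$ is the Dirichlet form, so on the $L^2$-level $A^{1/2}$ and $(-\Delta_D)^{1/2}$ agree up to $\IP$; the point is to promote the factorisation $\nabla A^{-1/2} = [\nabla (-\Delta_D)^{-1/2}]\circ[(-\Delta_D)^{1/2}A^{-1/2}]$ from $L^2$ to $L^q$, using Shen's resolvent and semigroup bounds (Proposition~\ref{Prop: Shen's theorem}, which in particular makes $A$ invertible, so no kernel issues arise) together with the boundedness of $\IP$. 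This is, in essence, the composite-operator analysis carried out by the third author in~\cite{Tolksdorf}, and it yields boundedness of $\nabla A^{-1/2}$ on $L^q_\sigma(\Omega)$, hence --- since $A^{-1/2}$ is itself bounded on $L^q_\sigma(\Omega)$ by the exponential decay of $e^{-tA}$ --- boundedness of $A^{-1/2} : L^q_\sigma(\Omega) \to W^{1,q}_{0,\sigma}(\Omega)$. The reverse inequality $\|A^{1/2}u\|_{L^q} \lesssim \|\nabla u\|_{L^q}$ follows by duality: for $u$ and $v$ in suitable dense subspaces of $\dom(A^{1/2})$ and $\dom(A^{1/2}_{q'})$ one has $\langle A^{1/2}u, v\rangle = \fa(u, A^{-1/2}v) = \int_\Omega \nabla u \cdot \overline{\nabla A^{-1/2}v}\,\d x$, which is $\lesssim \|\nabla u\|_{L^q}\,\|\nabla A^{-1/2}v\|_{L^{q'}} \lesssim \|\nabla u\|_{L^q}\,\|v\|_{L^{q'}}$ by the Riesz bound at the conjugate exponent $q'$ (also admissible); alternatively, once bounded imaginary powers of $A$ on $L^q_\sigma(\Omega)$ are available, one identifies $\dom(A^{1/2}) = [L^q_\sigma(\Omega), \dom(A)]_{1/2}$ by complex interpolation. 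Combining the two directions and a density argument gives $\dom(A^{1/2}) = W^{1,q}_{0,\sigma}(\Omega)$ with equivalent norms.

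The genuinely hard content is the pair of Lipschitz-domain inputs I am borrowing wholesale: the solution of the square root problem for the Laplacian (Neumann and Dirichlet) in the sharp range~\cite{Jerison-Kenig}, and the composite-operator estimates for the Stokes operator~\cite{Tolksdorf}. On a smooth domain both would reduce to elliptic $W^{2,q}$-regularity and interpolation, but here $\dom(A)$ and $\dom(B)$ do not embed into $W^{2,q}$ for any $q>1$ (Dahlberg~\cite{Dahlberg}, Jerison--Kenig~\cite{Jerison-Kenig}), so one is forced through Rellich-identity and layer-potential machinery; matching the exponent intervals produced by the Helmholtz projection, by Proposition~\ref{Prop: Shen's theorem}, and by the square root estimates --- so that they are the single interval $\lvert 1/q - 1/2\rvert < 1/6 + \eps$ --- is where the real care is needed. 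Once these are granted, the assembly above is routine.
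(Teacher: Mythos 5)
The paper does not prove this proposition; the bracket in the statement --- ``see~\cite{Tolksdorf} and~\cite{Jerison-Kenig}'' --- is the entire proof, deferring wholesale to those references. Your treatment of part (b) is consistent with that: the Neumann Laplacian acts componentwise, the square root equivalence on the stated range is exactly what Jerison--Kenig prove, and the one-dimensional kernel of $B$ is harmless since both sides of the asserted inequality vanish there. So far, so good.

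Part (a), however, rests on a step that is circular. You propose the factorisation $\nabla A^{-1/2} = \bigl[\nabla (-\Delta_D)^{-1/2}\bigr]\circ\bigl[(-\Delta_D)^{1/2}A^{-1/2}\bigr]$, where $-\Delta_D$ is the Dirichlet Laplacian, and aim to promote it from $L^2$ to $L^q$. But once the Dirichlet square root estimate $\|(-\Delta_D)^{1/2}g\|_{L^q}\simeq\|\nabla g\|_{L^q}$ is available, the middle factor satisfies $\|(-\Delta_D)^{1/2}A^{-1/2}f\|_{L^q}\simeq\|\nabla A^{-1/2}f\|_{L^q}$, which is precisely the quantity the factorisation was introduced to estimate. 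The decomposition therefore only rephrases the Stokes Riesz bound; it does not reduce it to the Dirichlet one. Nor do the inputs you invoke break this loop: Proposition~\ref{Prop: Shen's theorem} gives resolvent and semigroup decay on $L^q_\sigma(\Omega)$, not a gradient resolvent bound $\|\nabla(\lambda+A)^{-1}\|_{L^q\to L^q}\lesssim\lvert\lambda\rvert^{-1/2}$, and boundedness of $\IP$ on $L^q$ does not supply one either. What is actually proved in~\cite{Tolksdorf} works intrinsically with the Stokes operator: one derives $L^q$ gradient bounds and off-diagonal estimates for the Stokes resolvent and semigroup on Lipschitz domains and then establishes boundedness of $\nabla A^{-1/2}$ (represented, e.g., as $\pi^{-1}\int_0^\infty t^{-1/2}\nabla(t+A)^{-1}\,\d t$) by a Calder\'on--Zygmund-type argument --- no detour through the Dirichlet Laplacian. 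Your duality argument recovering the reverse inequality $\|A^{1/2}u\|_{L^q}\lesssim\|\nabla u\|_{L^q}$ from the forward Riesz bound at the conjugate exponent is correct; the gap is that the forward bound itself has not been obtained.
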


In the following proposition, we recall and outline the proofs of $L^p$-$L^q$-type estimates for the Stokes and the heat semigroups.

\begin{proposition}\label{prop:smoothing}
Let $\Omega \subset \R^3$ be a bounded Lipschitz domain. Then there exists $\eps>0$ such that,
\begin{itemize}
\item[(a)] there exists $\omega>0$ and a constant $C > 0$ such that for $\frac{3}{2}-\eps< p \leq q < 3+\eps$ and $t>0$, 
\begin{align*}
\norm{e^{-t A} f}_{L^q_{\sigma}(\Omega)}  &\leq C e^{-\omega t} t^{-\tfrac{3}{2}\left(\tfrac{1}{p} - \tfrac{1}{q} \right)} \norm{f}_{L^p_{\sigma}(\Omega)}, \quad f\in L^p_{\sigma}(\Omega),\\
\norm{e^{-t A} \PP \div F}_{L^q_{\sigma}(\Omega)}  &\leq C e^{-\omega t} t^{-\tfrac{1}{2}-\tfrac{3}{2}\left(\tfrac{1}{p} - \tfrac{1}{q} \right)} \norm{F}_{L^p(\Omega)^{3 \times 3}}, \quad F\in L^p(\Omega)^{3 \times 3},
\end{align*}
where $e^{-t A} \PP \div$ is the $L^p$-extension of the respective operator defined a priori on $C_c^{\infty} (\Omega)^{3 \times 3}$.
\item[(b)] there exists $\omega>0$ and a constant $C > 0$ such that for all $1 < p \leq q \leq \infty$ with $p < \infty$ and $t>0$,
\begin{align*}
\norm{e^{-t B} f}_{L^q(\Omega)^3}  &\leq C e^{-\omega t} t^{-\tfrac{3}{2}\left(\tfrac{1}{p} - \tfrac{1}{q} \right)} \norm{f}_{L^p(\Omega)^3}, \quad f\in L^p_0(\Omega)^3,
\end{align*}
using the convention $\frac{1}{\infty} = 0$. Moreover, for all $\frac{3}{2} - \eps < p \leq q < 3 + \eps$ and for $t > 0$ it holds
\begin{align*}
\norm{\nabla e^{-t B} f}_{L^q(\Omega)^{3 \times 3}}  &\leq C e^{-\omega t} t^{-\tfrac{3}{2}\left(\tfrac{1}{p} - \tfrac{1}{q} \right)} \norm{\nabla f}_{L^p(\Omega)^{3 \times 3}}, \quad f\in L^p_0(\Omega)^3 \cap W^{1,p}(\Omega)^3, \\
\norm{\nabla e^{-t B}  f}_{L^q(\Omega)^{3 \times 3}}  &\leq C e^{-\omega t} t^{-\tfrac{1}{2}-\tfrac{3}{2}\left(\tfrac{1}{p} - \tfrac{1}{q} \right)} \norm{f}_{L^p(\Omega)^3}, \quad f\in L^p_{0}(\Omega)^3.
\end{align*}
\end{itemize}
\end{proposition}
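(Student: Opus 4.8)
The plan is to reduce all statements to two inputs: the analyticity and exponential stability of the two semigroups (Propositions~\ref{Prop: Shen's theorem} and~\ref{prop:Linfty}), the square root characterisations (Proposition~\ref{Prop: Square root properties}), and the fact that the composite operator $e^{-tA}\PP\div$ is $L^p$-bounded (from~\cite{Tolksdorf}). The $L^p$-$L^q$ smoothing estimates are then obtained by the standard interpolation-plus-Sobolev-embedding argument together with the abstract identity $\|C^{1/2}e^{-tC}\|_{\cL(X)}\le C t^{-1/2}e^{-\omega t}$ valid for any generator $-C$ of an exponentially stable analytic semigroup. I would organise the proof around the observation that, on a bounded Lipschitz domain, one has the Sobolev embedding $W^{1,p}(\Omega)\hookrightarrow L^{p^*}(\Omega)$ with $1/p^*=1/p-1/3$ for $p<3$ (and into every $L^q$, $q<\infty$, when $p=3$, and into $L^\infty$ when $p>3$), and that $\dom(A^{1/2})$ and $\dom(B^{1/2})$ are the relevant Sobolev spaces.

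First I would treat part (a). For the first estimate: writing $e^{-tA}f=e^{-(t/2)A}A^{-1/2}\cdot A^{1/2}e^{-(t/2)A}f$, the factor $A^{1/2}e^{-(t/2)A}$ gains a $t^{-1/2}e^{-\omega t}$ in $\cL(L^p_\sigma)$ by analyticity and stability, and $A^{-1/2}=A^{1/2}$ maps $L^p_\sigma$ into $\dom(A^{1/2})=W^{1,p}_{0,\sigma}(\Omega)\hookrightarrow L^{p^*}_\sigma$; iterating this across finitely many intermediate exponents and using the semigroup law bridges any admissible pair $p\le q$ with the claimed exponent $-\tfrac32(\tfrac1p-\tfrac1q)$, the exponential factor coming from stability on each short time step. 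For the second estimate one argues identically but starts from the $L^p$-boundedness of $e^{-sA}\PP\div$ for small $s$ (this is the extra half-derivative, contributing the $t^{-1/2}$), then applies the first estimate to propagate from $L^p_\sigma$ to $L^q_\sigma$; one must be careful that the composite $e^{-sA}\PP\div$ is only defined via its $L^p$-extension, so the density of $C_c^\infty(\Omega)^{3\times3}$ and the consistency of the extensions across $p$ should be invoked explicitly.

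For part (b), the $L^p$-$L^q$ estimate for $e^{-tB}$ on $L^p_0(\Omega)^3$ is exactly as in (a), except that the decay (rather than mere boundedness) of the semigroup requires restricting to average-free functions: on $L^p_0$ the Neumann Laplacian is invertible and analytic, hence exponentially stable, and one may bootstrap via $B^{1/2}$ and the embedding $\dom(B^{1/2})=W^{1,p}\hookrightarrow L^{p^*}$. The extension to $q=\infty$ when $p<\infty$ uses $L^\infty$-contractivity from Proposition~\ref{prop:Linfty}: split $e^{-tB}=e^{-(t/2)B}e^{-(t/2)B}$, estimate the first factor $L^p_0\to L^r$ for some large finite $r$ by the already-proven case, and the second $L^r\to L^\infty$ again by the finite-exponent case followed by one $L^\infty$-contraction step, or more directly by combining with the gradient estimate and a Sobolev embedding into $L^\infty$. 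For the two gradient estimates, by Proposition~\ref{Prop: Square root properties}(b) one has $\|\nabla e^{-tB}f\|_{L^q}\simeq\|B^{1/2}e^{-tB}f\|_{L^q}$, and then: the first gradient estimate follows by writing $B^{1/2}e^{-tB}f=e^{-tB}(B^{1/2}f)=e^{-tB}g$ with $\|g\|_{L^p}\simeq\|\nabla f\|_{L^p}$ and applying the $L^p$-$L^q$ bound for $e^{-tB}$ on $L^p_0$ (noting $B^{1/2}f\in L^p_0$); the second gradient estimate follows by writing $B^{1/2}e^{-tB}=B^{1/2}e^{-(t/2)B}\cdot e^{-(t/2)B}$, gaining $t^{-1/2}$ from $B^{1/2}e^{-(t/2)B}$ on $L^p_0$ and then $t^{-\tfrac32(\tfrac1p-\tfrac1q)}$ from the $L^p$-$L^q$ bound.

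The main obstacle I anticipate is \emph{not} any single estimate but the careful handling of admissibility ranges and the chaining across intermediate exponents: since the Sobolev embedding only improves the exponent by $1/3$ at each step, reaching a target $q$ close to $3+\eps$ from a source $p$ close to $3/2-\eps$ requires finitely many iterations, and at each one must verify that the intermediate exponents still lie in the admissible window $|1/r-1/2|<1/6+\eps$ (and, for $B$, that the average-free condition is preserved — which it is, since $e^{-tB}$ maps $L^r_0$ into $L^r_0$). A secondary subtlety is the $p=3$ endpoint, where $W^{1,3}$ embeds into every $L^q$ with $q<\infty$ but not into $L^\infty$, so the passage to $q=\infty$ must route through the $L^\infty$-contractivity of Proposition~\ref{prop:Linfty} rather than through the square-root embedding. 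Both of these are standard once set up correctly, but they are the places where the Lipschitz (as opposed to smooth) setting forces one to stay within the restricted exponent range throughout.
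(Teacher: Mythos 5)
Your overall strategy for part~(a) is a genuinely different route from the paper's. The paper simply cites~\cite[Thm.~1.2]{Tolksdorf} for the $L^p$-$L^q$ bound on $e^{-tA}$ (without the exponential factor), obtains the exponential decay by splitting $e^{-tA} = e^{-\frac{t}{2}A}e^{-\frac{t}{2}A}$ and using exponential stability on one factor, and derives the second estimate by dualising the gradient bound $t^{1/2}\|\nabla e^{-tA}f\|_{L^p_\sigma}\leq C\|f\|_{L^p_\sigma}$. Your square-root-plus-Sobolev bootstrap is an acceptable alternative for (a), since all exponents involved stay inside the window $(\tfrac32-\eps, 3+\eps)$ where $\dom(A^{1/2})=W^{1,p}_{0,\sigma}(\Omega)$ is available; the chaining you describe is the standard device. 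Both routes work for (a).

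For part~(b), however, there is a genuine gap. The first estimate in (b) is asserted for the full range $1<p\leq q\leq\infty$ (with $p<\infty$), and your mechanism — bootstrap via $\dom(B^{1/2})=W^{1,p}(\Omega)^3$ and Sobolev embedding — only operates in the window $\tfrac32-\eps<p\leq q<3+\eps$ where Proposition~\ref{Prop: Square root properties}(b) holds on a Lipschitz domain. It cannot reach $p$ near $1$ or $q$ between $3+\eps$ and $\infty$, and in particular cannot reach $q=\infty$ with the sharp exponent $t^{-3/(2p)}$. Your proposed patch (go $L^p_0\to L^r$ for some $r>3$ in the admissible window, then $W^{1,r}\hookrightarrow L^\infty$ via one more square-root step, or via the gradient estimate) produces the exponent $\tfrac12+\tfrac32\left(\tfrac1p-\tfrac1r\right)$, and this equals $\tfrac3{2p}$ only when $r=3$ exactly — but $W^{1,3}$ does not embed into $L^\infty$. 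For any $r>3$ you overshoot: the resulting bound blows up strictly faster than $t^{-3/(2p)}$ as $t\to 0$, so it is not the claimed estimate. The $L^\infty$-contractivity of Proposition~\ref{prop:Linfty} cannot supply the missing decay either, as it gives no time gain.

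The paper's proof of (b) avoids this by a different tool altogether: it invokes the Gaussian upper bound for the Neumann heat kernel on bounded Lipschitz domains (\cite[Thm.~3.2.9]{Davies}), $|k_t(x,y)|\leq C_1\max\{t^{-3/2},1\}e^{-|x-y|^2/(C_2 t)}$, applies Young's inequality in the spatial variable to get the $L^p$-$L^q$ bound for \emph{all} $1\leq p\leq q\leq\infty$ with the sharp exponent, and then splits the semigroup to import the exponential decay on $L^p_0$. This pointwise-kernel input is what makes the full range of $(p,q)$ accessible; your functional-analytic bootstrap has no analogue for it. The last two estimates in (b) you then derive exactly as the paper does (from the first one and the square root characterisation), so once the first one is in place your argument there is fine.
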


\begin{proof}
The first estimate in (a) was proven in~\cite[Thm.~1.2]{Tolksdorf} but without an exponential decay factor. The exponential decay can be obtained by using the semigroup law $e^{- t A} = e^{- \frac{t}{2} A} e^{- \frac{t}{2} A}$ and then by using the exponential decay of the semigroup on $L^q_{\sigma} (\Omega)$ first, followed by the corresponding $L^p$-$L^q$-estimate from~\cite[Thm.~1.2]{Tolksdorf}. The second estimate in (a) is derived similarly by dualising the gradient estimates $t^{1 / 2} \| \nabla e^{- t A} f \|_{L^p_{\sigma} (\Omega)} \leq C \| f \|_{L^p_{\sigma} (\Omega)}$ and then by employing the semigroup law as above and the first estimate in (a). \par
To prove the first estimate in (b), notice that the heat kernel $k_t (x , y)$ of the heat semigroup $(e^{-t B})_{t \geq 0}$ admits the following estimate
\begin{align*}
 \lvert k_t (x , y) \rvert \leq C_1 \max \{ t^{- \frac{3}{2}} , 1 \} e^{- \frac{\lvert x - y \rvert^2}{C_2 t}}
\end{align*}
for some constants $C_1 , C_2 > 0$, see~\cite[Thm.~3.2.9]{Davies}. With this and Young's inequality, it follows
\begin{align*}
 \| e^{-t B} f \|_{L^q(\Omega)^3} \leq C_1 \max \{ t^{- \frac{3}{2}} , 1 \} \Big\| x \mapsto e^{- \frac{\lvert x \rvert^2}{C_2 t}} \Big\|_{L^r (\R^3)} \| f \|_{L^p (\Omega)^3},
\end{align*}
where $1 \leq r < \infty$ is such that
 $1 + \frac{1}{q} = \frac{1}{r} + \frac{1}{p}$.
This implies 
\begin{equation}
\| e^{-t B} f \|_{L^q(\Omega)^3} \leq C \max \{ t^{- \frac{3}{2}} , 1 \} t^{\frac{3}{2 r}} \| f \|_{L^p (\Omega)^3} \leq C \max \{ 1 , t^{\frac{3}{2}} \} t^{-\frac{3}{2}(\frac{1}{p}-\frac{1}{q})} \| f \|_{L^p (\Omega)^3}. 
\notag
\end{equation}
Now let $f \in L^{p}_{0}(\Omega) $. Using the estimate above and splitting $e^{-tB}=e^{-\frac{t}{2}B} e^{-\frac{t}{2}B} $ yields
\begin{equation}
\norm{e^{-t B} f}_{L^q(\Omega)^3} \leq C e^{-\frac{t}{2}\omega_{1}} \max \Big\{ 1 , \Big(\frac{t}{2} \Big)^{\frac{3}{2}} \Big\}\ t^{-\frac{3}{2}(\frac{1}{p}-\frac{1}{q})} \norm{f}_{L^p (\Omega)^3} ,
\notag
\end{equation}
for some constant $\omega_{1}>0 $. The exponential decay is a consequence of the fact that $f$ has average zero. It is then easy to see that for some constant $\omega > 0 $
\begin{equation}
\norm{e^{-t B} f}_{L^q(\Omega)^3} \leq C e^{-\omega t} t^{-\frac{3}{2}(\frac{1}{p}-\frac{1}{q})} \norm{f}_{L^p (\Omega)^3}.
\notag
\end{equation}
The second and third estimate in (b) follow from the first by using Proposition~\ref{Prop: Square root properties}.
\end{proof}

Another important notion that is needed for the proof of the main result as well as regularity considerations of the solutions to~\eqref{eq:LCD} is the one of maximal regularity. \par
Let $X$ be a Banach space and $C : \dom(C) \subset X \to X$ be a closed and densely defined operator such that $- C$ generates a bounded analytic semigroup. Fix $1 < s < \infty$ and $0 < T \leq \infty$ and consider for $f \in L^s (0 , T ; X)$ and $c$ in the real interpolation space $(X , \dom(C))_{1 - 1 / s , s}$ the abstract Cauchy problem
\begin{align}
\label{Eq: ACP}
\left\{ \begin{aligned}
 u^{\prime} (t) + C u (t) &= f(t) \qquad (0 < t < T), \\
 u(0) &= c.
\end{aligned} \right.
\end{align}
It is well-known~\cite[Prop.~3.1.16]{ABHN}, that~\eqref{Eq: ACP} admits a unique mild solution $u$ that satisfies
\begin{align*}
 u(t) = e^{- t C} c + \int_0^t e^{- (t - s) C} f(s) \; \d s \qquad (0 < t < T).
\end{align*}
We say that $C$ has maximal $L^s$-regularity if \textit{for every} $f \in L^s (0 , T ; X)$ and \textit{every} $c \in (X , \dom(C))_{1 - 1 / s , s}$, the corresponding mild solution $u$ is differentiable for almost every $t$, satisfies $u (t) \in \dom(C)$ for almost every $t$, and $u^{\prime} , C u \in L^s (0 , T ; X)$. If $T$ is finite or, if $T = \infty$ and $C$ is boundedly invertible, then maximal $L^s$-regularity is equivalent to the fact that the mild solution to~\eqref{Eq: ACP} lies in the maximal regularity class
\begin{align*}
 u \in W^{1 , s} (0 , T ; X) \cap L^s (0 , T ; \dom(C)).
\end{align*}
Let us summarise some well-known facts: Maximal $L^s$-regularity is independent of $s$, i.e., $C$ has maximal $L^s$-regularity for some $1 < s < \infty$ if and only if it has maximal $L^s$-regularity for every $1 < s < \infty$, cf.~\cite{DHP}. Because of this, we will henceforth only write maximal regularity instead of maximal $L^s$-regularity. Another well-known fact is that it suffices to prove maximal regularity in the special case $c = 0$, see, e.g., the discussion in~\cite[Sec.~2.2]{Tolksdorf_Dissertation}. \par
For the Stokes operator, maximal regularity was proven by Kunstmann and Weis in~\cite[Prop.~13]{Kunstmann_Weis}, see also~\cite[Thm.~5.2.24]{Tolksdorf_Dissertation}. In the case of the negative Neumann Laplacian, maximal regularity follows from Proposition~\ref{prop:Linfty} combined with a result of Lamberton~\cite[Cor.~1.1]{Lamberton}.

\begin{proposition}
\label{Prop: Maximal regularity of strong operators}
Let $\Omega \subset \R^3$ be a bounded Lipschitz domain and $1 < T \leq \infty$.
\begin{enumerate}
 \item[(a)] There exists $\eps > 0$ such that for every $\lvert 1 / q - 1 / 2 \rvert < 1 / 6 + \eps$ the Stokes operator on $L^q_{\sigma} (\Omega)$ has maximal regularity.
 \item[(b)] For every $1 < q < \infty$, the negative Neumann Laplacian on $L^q(\Omega)^3$ has maximal regularity.
\end{enumerate}
\end{proposition}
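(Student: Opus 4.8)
The plan is to derive both assertions from established abstract maximal regularity criteria, feeding in the qualitative semigroup properties already recorded above; no new estimates are needed.

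\emph{Part (a).} First I note that, by Proposition~\ref{Prop: Shen's theorem}, the operator $-A_q$ generates an exponentially stable analytic semigroup on $L^q_\sigma(\Omega)$ whenever $\lvert 1/q - 1/2 \rvert < 1/6 + \eps$; hence $A_q$ is closed, densely defined and boundedly invertible, so that the cases $T < \infty$ and $T = \infty$ need not be distinguished. It then suffices to invoke the theorem of Kunstmann and Weis~\cite[Prop.~13]{Kunstmann_Weis} (see also~\cite[Thm.~5.2.24]{Tolksdorf_Dissertation}): on a bounded Lipschitz domain $\Omega \subset \R^3$ the Stokes operator is $\cR$-sectorial on $L^q_\sigma(\Omega)$ (equivalently, admits a bounded $H^\infty$-calculus) precisely in this range of $q$, and by the characterisation of maximal $L^s$-regularity via $\cR$-sectoriality, also contained in~\cite{Kunstmann_Weis}, this is the desired conclusion. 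The only point of care is that the $\eps$ furnished by~\cite{Kunstmann_Weis} and the one from Proposition~\ref{Prop: Shen's theorem} be taken to agree, which is arranged by passing to the minimum of the two.

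\emph{Part (b).} Since the form $\fb$ is symmetric, $B_2$ is a nonnegative self-adjoint operator, and by (the proof of) Proposition~\ref{prop:Linfty} the semigroup $(e^{-tB})_{t \ge 0}$ is positivity preserving and contractive on $L^q(\Omega)^3$ for all $1 \le q \le \infty$, i.e. it is a symmetric sub-Markovian semigroup. As $B$ acts diagonally on the three components it is enough to treat the scalar Neumann Laplacian on $L^q(\Omega)$; to this operator Lamberton's theorem~\cite[Cor.~1.1]{Lamberton} applies and yields maximal $L^s$-regularity on $L^q(\Omega)$ for every $1 < q < \infty$ on any finite time interval. (Alternatively I could use the Gaussian heat kernel bounds of~\cite[Thm.~3.2.9]{Davies} to obtain a bounded $H^\infty$-calculus of $B + \delta$ for $\delta > 0$ via~\cite{Duong_Robinson}, and then conclude with the Kalton--Weis theorem~\cite{Kalton_Weis}.) To pass from finite $T$ to $T = \infty$ I would exploit that $0$ is the sole obstruction to exponential stability: splitting $L^q(\Omega)^3 = \IC^3 \oplus L^q_0(\Omega)^3$ according to the mean value, $B$ annihilates the constants while on $L^q_0(\Omega)^3$ the semigroup decays exponentially by Proposition~\ref{prop:smoothing}(b); combining this decay with the finite-interval result in the usual way (cutting the Duhamel integral at $t = 1$ and estimating the tail by the exponential decay) then upgrades the conclusion to $(0 , \infty)$.

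The proof is thus essentially an assembly of the cited results, so the difficulty is bookkeeping rather than substance: in (a) one must reconcile the two constants $\eps$, and in (b) one must handle the non-invertibility of $B$ for $T = \infty$, since both Lamberton's theorem and the Kalton--Weis route produce maximal regularity only on finite time intervals. I expect this last point to be the one place where a short argument beyond direct citation is needed.
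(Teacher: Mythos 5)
Your proof is correct and at its core identical to the paper's, which for (a) cites Kunstmann--Weis~\cite{Kunstmann_Weis} (see also~\cite{Tolksdorf_Dissertation}) and for (b) combines Proposition~\ref{prop:Linfty} with Lamberton~\cite[Cor.~1.1]{Lamberton}. The one divergence is the extra splitting argument you add in (b) to pass from finite $T$ to $T = \infty$. That argument is valid as far as it goes -- the mean-value projections commute with $B$, the restriction $B|_{L^q_0(\Omega)^3}$ is boundedly invertible by Proposition~\ref{prop:smoothing}~(b), and $B$ vanishes on $\IC^3$ -- but it rests on a misreading of Lamberton's theorem. Lamberton's Corollary~1.1 is formulated on the half-line $\R_+$ from the outset: under the hypotheses secured by Proposition~\ref{prop:Linfty} (a $C_0$-semigroup that is contractive on every $L^r$, $1\le r\le\infty$, and analytic on $L^2$) it gives the singular-integral bound $\big\lVert t \mapsto B \int_0^t e^{-(t-s)B} f(s)\,\d s\big\rVert_{L^s(0,\infty;L^q)} \le C\lVert f\rVert_{L^s(0,\infty;L^q)}$ without requiring $0 \in \rho(B)$. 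That is precisely why Lamberton is a cleaner reference here than the alternative you mention parenthetically (Davies' Gaussian bounds, then Duong--Robinson for a bounded $H^{\infty}$-calculus of the \emph{shifted} operator $B+\delta$, then Kalton--Weis), which indeed produces maximal regularity only on finite intervals. Amusingly, that alternative route appears verbatim as a commented-out paragraph in the paper's source; the authors evidently considered it and replaced it with Lamberton for the very reason you flagged. So your extra splitting step is a harmless safeguard, but it is superfluous, and your concluding worry that ``a short argument beyond direct citation is needed'' is unfounded.
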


We close this section with a final remark concerning the results of this section on smooth domains.

\begin{remark}\label{rem:smooth}
If $\partial \Omega$ is smooth, then all of the results mentioned in this section are valid on the whole interval $q\in (1 , \infty)$, see~\cite{Jerison-Kenig} for the corresponding results for the Laplacian and~\cite{Giga_Analyticity},~\cite{Giga_fractional},~\cite{Giga}, and~\cite{Geissert_et_al} for the Stokes operator.
\end{remark}
%
%
%
%
%
\section{Main Result}
\label{Sec: Main results}

To begin with, we (formally) apply the Helmholtz projection $\PP$ to the first equation in~\eqref{eq:LCD} and consider the resulting system of equations
\begin{align}\label{eq:PLCD}
\left\{
\begin{array}{rll}
\partial_t u   + A u  & = - \PP (u \cdot \nabla) u - \PP \div ([\nabla d]^{\top} \nabla d), \quad &\text{ in } (0,T)\times  \Omega ,  \\
\partial_t d + B d  & = - (u \cdot \nabla) d + \abs{\nabla d}^2 d, \quad &\text{ in } (0,T) \times \Omega  , \\
\end{array}\right.
\end{align}
with initial conditions $u(0)=a$ and $d(0)=b$ on the space 
\begin{align*}
X= L_{\sigma}^q(\Omega) \times L^q(\Omega)^3.
\end{align*}

Our aim is to construct a \textit{mild solution} to~\eqref{eq:PLCD}, that is, a solution to the integral equations
\begin{align}
\begin{split}\label{eq:mild_solution}
u(t) &= e^{- t A} a - \int_0^t e^{- (t - s) A} \IP \text{div} \big\{ u (s) \otimes u (s) + [\nabla d (s)]^{\top} \nabla d (s) \big\} \; \d s , \\
d(t) &= e^{- t B} b - \int_0^t e^{- (t - s) B} \big\{ (u(s) \cdot \nabla) d(s) - \lvert \nabla d(s) \rvert^2 d(s) \big\} \; \d s,
\end{split}
\end{align}
and then to show that this solution preserves the condition $\abs{d}=1$ if $\abs{d(0)}=1$, and therefore~\eqref{eq:PLCD} turns out to be equivalent to~\eqref{eq:LCD}-\eqref{abs}. \\

For $0 < T \leq \infty$ and $3 \leq p < q$, the class of solutions considered is defined using
\begin{align*}
&S_q^u(T):= \Big\{ u\in C((0,T);L_{\sigma}^q(\Omega))\mid \sup_{0< s < T} e^{\frac{\omega s}{2}} s^{\tfrac{3}{2}\left(\tfrac{1}{p} - \tfrac{1}{q} \right)} \norm{u(s)}_{L^q_{\sigma}(\Omega)} < \infty \Big\}, \\
&S_q^d(T):= \Big\{ d \in C((0,T);W^{1,q}(\Omega)^3 )\mid \sup_{0< s < T} e^{\frac{\omega s}{2}} s^{\tfrac{3}{2}\left(\tfrac{1}{p} - \tfrac{1}{q} \right)} \norm{\nabla d (s)}_{L^q(\Omega)^{3 \times 3}} < \infty \Big\},
\end{align*}
where $\omega > 0$ is the minimum of the corresponding constants appearing in Proposition~\ref{prop:smoothing}.

\subsection{Neumann boundary condition for the director field}
Define for any $b\in L^1(\Omega)$, the average and the complementary mean value free part
\begin{align}\label{eq:split}
\overline{b} := \frac{1}{\abs{\Omega}}\int_{\Omega} b \; \d x \qquad \hbox{and} \qquad b_s := b-\overline{b}, \quad \hbox{where } \int_{\Omega} b_s \; \d x =0.
\end{align}

The main result reads as follows.

\begin{theorem}\label{thm:main}
Let $\Omega\subset \R^3$ be a bounded Lipschitz domain, then there exists $\eps > 0$ such that given initial conditions $a \in L^p_{\sigma}(\Omega)$ and $b\in W^{1,p}(\Omega)^3 \cap L^{\infty}(\Omega)^3$ where $3 \leq p < 3+\eps$, the following hold true for $q \in (p, 3+\eps)$.
\begin{itemize}
\item[(a)] There exists $T > 0$ depending on the initial data such that equation~\eqref{eq:PLCD} with Neumann boundary conditions~\eqref{bc:Neumann} for $d$ has a local mild solution $(u , d)$ satisfying 
\begin{align*}
\begin{split}
&u\in S_q^u(T) \cap BC([0,T);L_{\sigma}^p(\Omega)),\\
&d_s \in S_q^d(T) \cap BC([0,T);W^{1,p}(\Omega)^3) \cap BC([0,T);L^{\infty}(\Omega)^3),\qquad
\overline{d} \in BC([0,T);\R^3),
\end{split}
\end{align*}
where in the limit $s \to 0+$, one has
\begin{align*}
\norm{u(s) - a}_{L^p_{\sigma}(\Omega)} \to 0, \quad \norm{d(s)-b}_{L^{\infty}(\Omega)^3} \to 0, \quad \norm{\nabla [d (s) - b]}_{L^p(\Omega)^{3\times 3}} \to 0. 
\end{align*}
\item[(b)]
In the limit $s \to 0+$, the solutions satisfy  
\begin{align*}
s^{\tfrac{3}{2}\left(\tfrac{1}{p} - \tfrac{1}{q} \right)} \norm{u(s)}_{L^q_{\sigma}(\Omega)} \to 0 \quad \hbox{and} \quad s^{\tfrac{3}{2}\left(\tfrac{1}{p} - \tfrac{1}{q} \right)} \norm{\nabla d (s)}_{L^q(\Omega)^{3\times 3}} \to 0. 
\end{align*}
\item[(c)] If $a$ and $\nabla b$ are sufficiently small, then the solution exists globally in the class
\begin{align*}
&u\in S_q^u(\infty) \cap BC([0,\infty);L_{\sigma}^p(\Omega)),\\
&d_s \in S_q^d(\infty) \cap BC([0,\infty);W^{1,p}(\Omega)^3) \cap BC([0,\infty);L^{\infty}(\Omega)^3),\qquad
\overline{d} \in BC([0,\infty);\R^3).
\end{align*}
\item[(d)] 
The solution is unique in the class given in $(a)$ provided $p>3$, and in the case $p=3$, it is unique in the subset of this class satisfying in addition the limit conditions $(b)$. 

 \item[(e)] Equation~\eqref{eq:PLCD} subject to Neumann boundary conditions~\eqref{bc:Neumann} preserves the condition $\abs{d}=1$ if $\abs{d(0)}=\abs{b}=1$.
\end{itemize}
\end{theorem}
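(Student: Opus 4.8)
The plan is to solve~\eqref{eq:mild_solution} by a Kato--Fujita type fixed-point iteration in the scale-invariant class, after isolating the component of $d$ on which the Neumann heat semigroup does not smooth. Write $d=\overline d+d_s$ as in~\eqref{eq:split}. Since $e^{-tB}$ annihilates constants, $e^{-tB}b=\overline b+e^{-tB}b_s$ and Proposition~\ref{prop:smoothing}(b) applies to the average-free part $d_s$, while integrating the $d$-equation of~\eqref{eq:LCD} over $\Omega$ and using $\div u=0$ together with $u|_{\partial\Omega}=0$ (so that $\overline{(u\cdot\nabla)d}=0$) shows that the average $\overline d(t)\in\R^3$ solves the ordinary differential equation $\tfrac{\d}{\d t}\overline d(t)=\overline{|\nabla d(t)|^2 d(t)}$, $\overline d(0)=\overline b$. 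I would therefore work in the complete metric space of triples $(u,d_s,\overline d)$ with $u\in S_q^u(T)\cap BC([0,T);L^p_\sigma(\Omega))$, $d_s\in S_q^d(T)\cap BC([0,T);W^{1,p}(\Omega)^3)\cap BC([0,T);L^\infty(\Omega)^3)$ and $\overline d\in BC([0,T);\R^3)$, equipped with the obvious weighted norms and, when $p=3$, with the additional requirement that the weighted $L^q$-norms of $u$ and $\nabla d_s$ tend to $0$ as $s\to0$; on this space I define the map $\Psi$ whose components are the right-hand sides of~\eqref{eq:mild_solution}, the $\overline d$-component being read off from the ODE above.

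The heart of (a)--(d) is then showing that $\Psi$ is a contraction on a small ball. For the fluid part one writes $(u\cdot\nabla)u=\div(u\otimes u)$ and uses the second estimate in Proposition~\ref{prop:smoothing}(a) to bound $\|e^{-(t-s)A}\PP\div(u(s)\otimes u(s))\|_{L^q_\sigma}$ by $Ce^{-\omega(t-s)}(t-s)^{-\frac12-\frac{3}{2q}}\|u(s)\|_{L^q_\sigma}^2$, and the analogous bound holds for $[\nabla d(s)]^{\top}\nabla d(s)$ with $\|\nabla d(s)\|_{L^q}^2$ in place of $\|u(s)\|^2_{L^q_\sigma}$; crucially both carry the identical time weight $s^{-3(\frac1p-\frac1q)}$, which is exactly what makes the fluid iteration close. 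For the director part, $(u\cdot\nabla)d=\div(u\otimes d)$ is again average-free, so the gradient estimate $\|\nabla e^{-(t-s)B}f\|_{L^q}\le Ce^{-\omega(t-s)}(t-s)^{-\frac12-\frac{3}{2q}}\|f\|_{L^{q/2}}$ from Proposition~\ref{prop:smoothing}(b) applies; the term $|\nabla d|^2d$ is controlled using the $L^\infty$-bound on $d$, propagated by the $L^\infty$-contractivity of Proposition~\ref{prop:Linfty} together with the $L^{q/2}$--$L^\infty$ smoothing of $e^{-tB}$, and by splitting it into its average (which feeds the $\overline d$-ODE) and its average-free remainder (which is smoothed). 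Every convolution that arises is of the form $\int_0^t(t-s)^{-a}s^{-b}\,\d s=t^{1-a-b}B(1-a,1-b)$ and converges precisely because $q>3$ forces $a=\tfrac12+\tfrac{3}{2q}<1$ while $b=3(\tfrac1p-\tfrac1q)<1$; this is where the hypothesis $3\le p<q<3+\eps$ is used. Choosing the radius appropriately, $\Psi$ is a contraction on a small ball for $T$ small, giving (a); the smallness of the data terms $e^{-tA}a$, $\nabla e^{-tB}b_s$ in the relevant quantities follows, when $p=3$, from density of $L^q_\sigma$ in $L^3_\sigma$, and when $p>3$ because they are already $O(T^{\frac32(\frac1p-\frac13)})$. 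The limit relations (a)--(b) hold because the linear parts have them and the Duhamel terms vanish faster as $t\to0$; the global statement (c) follows by the same contraction on $(0,\infty)$, using the factors $e^{-\omega t}$ of Proposition~\ref{prop:smoothing} to bound the weighted norms of the data by $C(\|a\|_{L^p_\sigma}+\|\nabla b\|_{L^p})$ and to make $\int_0^\infty e^{-\omega s}s^{-3(\frac1p-\frac1q)}\,\d s$ finite (so $\overline d$ stays bounded), provided the data are small. Uniqueness (d) is the standard consequence of the contraction estimate applied to the difference of two solutions, which forces it to vanish first on a short interval and then everywhere; for $p=3$ this requires both solutions to satisfy the vanishing conditions of (b), which is why the statement restricts to that subclass.

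Finally, for the geometric constraint (e) set $\varphi:=|d|^2-1$. Differentiating and using the $d$-equation of~\eqref{eq:LCD} together with $\Delta|d|^2=2|\nabla d|^2+2d\cdot\Delta d$, one finds that $\varphi$ is a weak solution of the linear parabolic problem $\partial_t\varphi+(u\cdot\nabla)\varphi-\Delta\varphi=2|\nabla d|^2\varphi$ with $\partial_n\varphi=0$ on $\partial\Omega$ (since $\partial_n d=0$) and $\varphi(0)=|b|^2-1=0$. Testing with $\varphi$, the transport term drops by $\div u=0$ and $u|_{\partial\Omega}=0$, and one obtains $\tfrac12\tfrac{\d}{\d t}\|\varphi\|_{L^2}^2+\|\nabla\varphi\|_{L^2}^2=2\int_\Omega|\nabla d|^2\varphi^2\,\d x$; bounding the right-hand side by H\"older, the Sobolev interpolation $\|\varphi\|_{L^{2q/(q-2)}}\le C\|\varphi\|_{W^{1,2}}^{3/q}\|\varphi\|_{L^2}^{1-3/q}$ and Young's inequality to absorb $\|\nabla\varphi\|_{L^2}^2$, gives $\tfrac{\d}{\d t}\|\varphi(t)\|_{L^2}^2\le C\bigl(1+\|\nabla d(t)\|_{L^q}^{2q/(q-3)}\bigr)\|\varphi(t)\|_{L^2}^2$. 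The right-hand coefficient is only borderline integrable at $t=0$ along the natural scaling, and this is the main obstacle. It is resolved using part (b): from $s^{\frac32(\frac1p-\frac1q)}\|\nabla d(s)\|_{L^q}\to0$ one gets $s\bigl(1+\|\nabla d(s)\|_{L^q}^{2q/(q-3)}\bigr)\to0$ as $s\to0$, so for every $\eta>0$ there is $\delta>0$ with $\int_\tau^t\bigl(1+\|\nabla d\|_{L^q}^{2q/(q-3)}\bigr)\,\d s\le C+\eta\log(\delta/\tau)$ whenever $0<\tau<t<\delta$; combined with the elementary a priori bound $\|\varphi(t)\|_{L^2}\le Ct^{1/2}$ (from $\varphi=(d-b)\cdot(d+b)$, boundedness of $\|d(t)\|_{L^\infty}$, and $\|d(t)-b\|_{L^2}\le Ct^{1/2}$, the latter following from analyticity of $e^{-tB}$ on $L^2$, $b\in\dom(B^{1/2})$ and the Duhamel estimate), Gronwall's inequality on $(\tau,t)$ yields $\|\varphi(t)\|_{L^2}^2\le C\tau^{1-\eta}\to0$ as $\tau\to0$, so $\varphi\equiv0$ on $(0,\delta)$ and then, by the usual Gronwall argument on intervals bounded away from $0$, on all of $[0,T)$. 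For $p>3$ the coefficient is genuinely integrable at $t=0$ and this refinement is unnecessary.
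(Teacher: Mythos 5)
Your iteration scheme for parts (a)--(d) is essentially the one used in the paper: the same splitting $d = \overline d + d_s$ with the ODE for the average, the same weighted norms based on the decay in Proposition~\ref{prop:smoothing}, and the same contraction argument with the beta-function estimates. (The observation $(u\cdot\nabla)d = \operatorname{div}(u\otimes d)$ is not actually used in the paper's estimate of the director nonlinearity; what is used, and what you also use, is that $(u\cdot\nabla)d$ is mean-free and lies in $L^{q/2}$, so the third smoothing estimate in Proposition~\ref{prop:smoothing}(b) applies directly.) Your treatment of the smallness of $k_0^q(T)$ for $p=3$ via density and for $p>3$ via the vanishing of $C_1(T)$ is exactly the paper's Lemma~\ref{small} and Remark~\ref{rem:CT}.

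Where you genuinely diverge is part (e). Both you and the paper derive the identity
$\tfrac12\tfrac{\d}{\d t}\|\varphi\|_{L^2}^2+\|\nabla\varphi\|_{L^2}^2=2\int_\Omega|\nabla d|^2\varphi^2$ and face the same obstacle: at the critical exponent $p=3$ the coefficient $\|\nabla d(s)\|_{L^q}^{2q/(q-3)}$ scales like $s^{-1}$. You resolve this by upgrading the bound to $o(s^{-1})$ via the limit in part~(b), establishing an a~priori decay $\|\varphi(t)\|_{L^2}\lesssim t^{1/2}$ from the mild formulation, and then running Gronwall on $(\tau,t)$ with $\tau\to0$ so that $\tau^{1-\eta}\to0$. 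The paper instead avoids the borderline singularity altogether: it splits the time integral at a small $t_0$, approximates $\nabla b$ by smooth $\nabla b_n$, and uses the $BC([0,T);W^{1,p})$-continuity of $\nabla d$ together with $\|\nabla b_n\|_{L^q}<\infty$ to extract a \emph{uniform} Gronwall coefficient, so that plain Gronwall from $\varphi(0)=0$ closes the argument. Both routes work; the paper's is cleaner in that it needs no a~priori rate for $\varphi$ and no little-o strengthening, while yours has the virtue of making the role of the scaling-critical limit~(b) explicit.

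One genuine gap you should close: you assert that $\varphi$ is a weak solution of the stated heat equation and proceed to test with $\varphi$, but on a Lipschitz domain this does not follow automatically from the mild formulation. Since $\dom(B)$ does not embed into $W^{2,p}$ and one cannot integrate the $d$-equation by parts na\"ively, the paper establishes in Subsection~\ref{Subsec: Retrieving} (using the maximal regularity of $B_{p/2}$, Proposition~\ref{Prop: Maximal regularity of strong operators}, and the interpolation inclusion of $W^{1,p}$ into the trace space) that $d\in W^{1,s}(0,T;L^{p/2})\cap L^s(0,T;\dom(B_{p/2}))$, which is precisely what makes $\varphi'\in L^{p/2}$, $\varphi\in W^{1,(p/2)'}$, and the identity~\eqref{Eq: Tested d-equation} rigorous for test functions in $W^{1,(p/2)'}$. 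Without some version of this step, the differential inequality for $\|\varphi\|_{L^2}^2$ is only formal.
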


\begin{remark}\label{rem:kosdfgjhiodkwßeshjlfg}
	We note that the number $\eps > 0$ is minimum of the corresponding constants appearing in Section~\ref{Sec: Preliminaries}. 
	
	Furthermore, the smallness condition in Theorem~\ref{thm:main} $(c)$ can be made precise in the sense that
	there exists a constant $C>0$ depending only on $p$, $q$, and $\Omega$ such that if
	\begin{align*}
	\max\{ \kappa, \kappa^2 \} (1 + \norm{b}_{L^{\infty}(\Omega)^3})  
	< C, \quad \hbox{where} \quad \kappa:=\norm{a}_{L^p_{\sigma}(\Omega)} + \norm{\nabla b}_{L^p(\Omega)^{3\times 3}},
	\end{align*}
	then the solution exists globally.
\end{remark}

\begin{theorem}\label{Regularity}
For every $s \in (1 , 2)$, the solution in Theorem~\ref{thm:main} has the following additional regularity properties 
\begin{align*}
	&u \in W^{1 , s} (0 , T ; W^{-1 , \frac{p}{2}}_{\sigma} (\Omega)) \cap L^s (0 , T ;  W^{1 , \frac{p}{2}}_{0 , \sigma} (\Omega)), \\
	&d^{\prime} , B_{\frac{p}{2}} d  \in L^s (0 , T ; L^{\frac{p}{2}} (\Omega)^3).
\end{align*}
\end{theorem}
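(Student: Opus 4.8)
The plan is to use the theory of maximal regularity, already recalled in Proposition~\ref{Prop: Maximal regularity of strong operators}, applied to the two linear problems governing $u$ and $d$, after verifying that the nonlinear right-hand sides coming from the mild solution of Theorem~\ref{thm:main} lie in the appropriate $L^s$-in-time spaces. The scaling $p/2$ is natural: since $u(t) \in L^q_\sigma(\Omega)$ and $\nabla d(t) \in L^q(\Omega)^{3\times3}$ with $q \in (p,3+\eps)$, the quadratic terms $u\otimes u$, $[\nabla d]^\top \nabla d$, and $(u\cdot\nabla)d$ naturally live in $L^{q/2}$, and $|\nabla d|^2 d$ lies in $L^{q/2}$ as well (using $d \in L^\infty$). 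Since $q$ can be taken arbitrarily close to $p$ but $p/2 < q/2$, the inclusion $L^{q/2} \hookrightarrow L^{p/2}$ on the bounded domain $\Omega$ makes all nonlinear terms members of $L^{p/2}(\Omega)$ pointwise in time. One then needs $p/2$ to be an admissible exponent: since $3 \le p < 3+\eps$, we have $3/2 \le p/2 < 3/2 + \eps/2$, so $p/2$ lies in the admissible range $(3/2-\eps, 3+\eps)$ of Propositions~\ref{Prop: Shen's theorem}, \ref{prop:sqrt}, and \ref{Prop: Maximal regularity of strong operators}(a), and of course $p/2 \in (1,\infty)$ for part (b).

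The key steps, in order, are as follows. First, I would record the time-decay of the nonlinear terms: from $u \in S_q^u(T)$ and $d_s \in S_q^d(T)$ one has $\|u(s)\|_{L^q_\sigma} \lesssim s^{-\beta}$ and $\|\nabla d(s)\|_{L^q} \lesssim s^{-\beta}$ with $\beta = \tfrac32(\tfrac1p - \tfrac1q)$, whence $\|u(s)\otimes u(s)\|_{L^{p/2}}$, $\|[\nabla d(s)]^\top\nabla d(s)\|_{L^{p/2}}$, $\|(u(s)\cdot\nabla)d(s)\|_{L^{p/2}}$ all behave like $s^{-2\beta}$, and $\||\nabla d(s)|^2 d(s)\|_{L^{p/2}} \lesssim \|d\|_{L^\infty} s^{-2\beta}$. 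Second, note that $2\beta = 3(\tfrac1p-\tfrac1q) < 1$ can be guaranteed by choosing $q$ close enough to $p$ (indeed $2\beta \to 0$ as $q\to p$), so the right-hand sides lie in $L^s(0,T;L^{p/2})$ for every $s < 1/(2\beta)$, in particular for every $s \in (1,2)$ after shrinking the admissible $q$; near $s=T$ the exponential factors $e^{-\omega s}$ give integrability on $(0,\infty)$ if $T=\infty$. Third, for the director equation, apply Proposition~\ref{Prop: Maximal regularity of strong operators}(b) to $B_{p/2}$ on $L^{p/2}(\Omega)^3$: since the forcing $F_d(s) := -(u(s)\cdot\nabla)d(s) + |\nabla d(s)|^2 d(s)$ is in $L^s(0,T;L^{p/2})$ and $d$ is the mild solution, maximal regularity yields $d' , B_{p/2} d \in L^s(0,T;L^{p/2}(\Omega)^3)$ — here one uses that it suffices to treat the case of zero initial data for the maximal regularity \emph{estimate}, handling the semigroup part $e^{-tB}b$ separately (it is smooth for $t>0$ and, because $b \in W^{1,p} = \dom(B_p^{1/2})$, one checks $B e^{-tB}b \in L^s(0,T;L^{p/2})$ directly from analyticity and the smoothing estimates). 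Fourth, for the fluid equation, the forcing is $F_u(s) := -\IP\div\{u(s)\otimes u(s) + [\nabla d(s)]^\top\nabla d(s)\}$; writing $G(s) := u(s)\otimes u(s) + [\nabla d(s)]^\top\nabla d(s) \in L^s(0,T;L^{p/2})$, one has $\IP\div G(s) \in L^s(0,T;W^{-1,p/2}_\sigma(\Omega))$ since $\div : L^{p/2} \to W^{-1,p/2}$ and $\IP$ is bounded. One then invokes maximal regularity for the Stokes operator on the \emph{negative-order} space $W^{-1,p/2}_\sigma(\Omega)$ — equivalently, $A^{-1/2}$-conjugated maximal regularity, using that $A^{1/2}$ has a bounded $H^\infty$-calculus and $\dom(A^{1/2}) = W^{1,p/2}_{0,\sigma}(\Omega)$ by Proposition~\ref{prop:sqrt}(a) — to obtain $u \in W^{1,s}(0,T; W^{-1,p/2}_\sigma(\Omega)) \cap L^s(0,T; W^{1,p/2}_{0,\sigma}(\Omega))$.

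The main obstacle I anticipate is the fluid estimate in the negative-order space $W^{-1,p/2}_\sigma(\Omega)$: maximal regularity of the Stokes operator was stated (Proposition~\ref{Prop: Maximal regularity of strong operators}(a)) only on $L^q_\sigma(\Omega)$, so one must transfer it to $W^{-1,p/2}_\sigma$. The clean way is to conjugate by the isomorphism $A_{p/2}^{1/2} : W^{1,p/2}_{0,\sigma}(\Omega) \to L^{p/2}_\sigma(\Omega)$ (and its extension/dual $L^{p/2}_\sigma \to W^{-1,p/2}_\sigma$), under which $A$ on $W^{-1,p/2}_\sigma$ is similar to $A$ on $L^{p/2}_\sigma$ composed with this scale shift; since maximal regularity is preserved under such isomorphisms and the composite operator $e^{-tA}\IP\div$ has already been controlled in Proposition~\ref{prop:smoothing}(a), the estimate $\|A^{1/2} u\|_{L^s(L^{p/2})} = \|u\|_{L^s(W^{1,p/2}_{0,\sigma})} < \infty$ follows from maximal regularity applied to the forcing $A^{-1/2}\IP\div G \in L^s(0,T;L^{p/2}_\sigma)$, which is bounded because $A^{-1/2}\IP\div$ extends boundedly from $L^{p/2}$ to $L^{p/2}_\sigma$ (dual to the gradient estimate for $e^{-tA}$, in the spirit of Proposition~\ref{prop:smoothing}(a)). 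A secondary technical point is the low-regularity endpoint $p=3$, $p/2 = 3/2$: one must check that $3/2$ is strictly inside, not merely on the boundary of, the admissible interval — which it is, since the interval is $(3/2-\eps,3+\eps)$ — and that the time-integrability exponent $s<2$ is compatible with $2\beta<1$, which forces us to pick $q$ sufficiently close to $p$ at the outset; this is harmless because $q \in (p,3+\eps)$ was free to begin with and the regularity conclusion does not reference $q$.
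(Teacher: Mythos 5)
Your proposal is correct and follows the same overall strategy as the paper: verify that the nonlinear right-hand sides $F_u$ and $F_d$ have the required $L^s$-in-time integrability in $W^{-1,p/2}_\sigma$ and $L^{p/2}$ respectively, then invoke maximal regularity for the Neumann Laplacian $B_{p/2}$ and for a weak Stokes operator realised on $W^{-1,p/2}_\sigma$ via conjugation by the square-root isomorphism of Proposition~\ref{prop:sqrt}(a). You correctly identify the negative-order functional framework as the crux of the fluid estimate; the paper carries this out explicitly in Lemma~\ref{Lem: Representation of weak Stokes} and Proposition~\ref{Prop: Analyticity of weak Stokes}, which match your sketch.

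Two points where your execution diverges from the paper, both ultimately harmless but worth flagging. First, to place the nonlinearities in $L^s(0,T;L^{p/2})$, you use the time-weighted decay from $u\in S_q^u(T)$, $d_s\in S_q^d(T)$, obtaining the bound $s^{-2\beta}$ with $2\beta = 3(1/p-1/q)$, and then shrink $q$ towards $p$ so that $2\beta s < 1$. The paper takes the shorter route of using the already-available memberships $u\in BC([0,T);L^p_\sigma)$, $\nabla d\in BC([0,T);L^p)$, $d\in BC([0,T);L^\infty)$ from Theorem~\ref{thm:main}(a), which give $F_u\in L^\infty(0,T;W^{-1,p/2}_\sigma)$ and $F_d\in L^\infty(0,T;L^{p/2})$ at once, so no dependence on $q$ enters and no shrinking is needed. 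Second, to handle the inhomogeneous initial data you split off the semigroup term and estimate $B e^{-tB}b$ directly from analyticity; the paper instead verifies the real-interpolation membership $b\in (L^{p/2},\dom(B_{p/2}))_{1-1/s,s}$ and $\Phi^{-1}a\in (W^{-1,p/2}_\sigma,\dom(\mathcal A_{p/2}))_{1-1/s,s}$ via the chain $\dom(B^{1/2})\subset (X,\dom(B))_{1/2,\infty}\subset (X,\dom(B))_{1-1/s,s}$, which is the standard formulation of maximal regularity with nonzero initial datum. Both routes rely on $1<s<2$ in the same way, so they are equivalent in substance; the interpolation version is perhaps tidier because it plugs directly into the formulation of maximal regularity already recalled in Section~\ref{Sec: Preliminaries}.
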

\subsection{Dirichlet boundary condition for the director field}
In case of Dirichlet boundary conditions~\eqref{bc:Dirichlet} for the director field, consider the new variable
\begin{align*}
\delta = d-e 
\end{align*}
with homogeneous Dirichlet boundary conditions.
Denoting by $B$ the negative Dirichlet Laplacian, we construct a \textit{mild solution} to the transformed equation~\eqref{model3}, that is, a solution to the integral equations
\begin{align}\label{eq:mildDirichlet}
\begin{split}
u(t) &= e^{- t A} a - \int_0^t e^{- (t - s) A} \IP \text{div} \big\{ u (s) \otimes u (s) + [\nabla \delta (s)]^{\top} \nabla \delta (s) \big\} \; \d s , \\
\delta(t) &= e^{- t B} \tilde{b} - \int_0^t e^{- (t - s) B} \big\{ (u(s) \cdot \nabla) \delta(s) - \lvert \nabla \delta(s) \rvert^2 (\delta(s) +e) \big\} \; \d s,
\end{split}
\end{align}
where $\tilde{b}=b-e$.

\begin{theorem}\label{Dirichlet-main}
	Let $\Omega\subset \R^3$ be a bounded Lipschitz domain, then there exists $\eps > 0$ such that given initial conditions $a \in L^p_{\sigma}(\Omega)$ and $b\in W^{1,p}(\Omega)^3 \cap L^{\infty}(\Omega)^3$ with $b=e$ on $\partial \Omega$ for some $e \in \mathbb{S}^2$ where $3 \leq p < 3+\eps$, the following hold true for $q \in (p, 3+\eps)$.
	\begin{itemize}
		\item[(a)] There exists $T > 0$ depending on the initial data such that equation~\eqref{eq:mildDirichlet} with Dirichlet boundary conditions~~\eqref{bc:Dirichlet} has a local mild solution $(u , \delta)$ satisfying 
		\begin{align*}
		\begin{split}
		&u\in S_q^u(T) \cap BC([0,T);L_{\sigma}^p(\Omega)),\\
		&\delta \in S_q^d(T) \cap BC([0,T);W_0^{1,p}(\Omega)^3) \cap BC([0,T);L^{\infty}(\Omega)^3), 
		\end{split}
		\end{align*}
		where in the limit $s \to 0+$, one has
		\begin{align*}
		\norm{u(s) - a}_{L^p_{\sigma}(\Omega)} \to 0, \quad \norm{\delta(s)- \tilde{b}}_{L^{\infty}(\Omega)^3} \to 0, \quad \norm{\nabla [\delta (s) - \tilde{b}]}_{L^p(\Omega)^{3\times 3}} \to 0. 
		\end{align*}
		\item[(b)]
		In the limit $s \to 0+$, the solutions satisfy  
		\begin{align*}
		s^{\tfrac{3}{2}\left(\tfrac{1}{p} - \tfrac{1}{q} \right)} \norm{u(s)}_{L^q_{\sigma}(\Omega)} \to 0 \quad \hbox{and} \quad s^{\tfrac{3}{2}\left(\tfrac{1}{p} - \tfrac{1}{q} \right)} \norm{\nabla \delta (s)}_{L^q(\Omega)^{3\times 3}} \to 0. 
		\end{align*}
		\item[(c)] If $a$ and $\nabla b$ are sufficiently small, then the solution exists globally in the class
		\begin{align*}
		&u\in S_q^u(\infty) \cap BC([0,\infty);L_{\sigma}^p(\Omega)),\\
		&\delta \in S_q^d(\infty) \cap BC([0,\infty);W_0^{1,p}(\Omega)^3) \cap BC([0,\infty);L^{\infty}(\Omega)^3) .
		\end{align*}
		\item[(d)] The solution is unique in the class given in $(a)$  provided $p>3$, and in the case $p=3$, it is unique in the subset of this class satisfying in addition the limit conditions $(b)$. 
		\item[(e)] Equation~\eqref{eq:PLCD} subject to Dirichlet boundary conditions~\eqref{bc:Dirichlet} preserves the condition $\abs{d}=1$ if $\abs{d(0)}=\abs{b}=1$.
	\end{itemize}
\end{theorem}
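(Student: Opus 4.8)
The plan is to reduce everything to the already-proven Neumann case (Theorem~\ref{thm:main}) by running the same iteration scheme for the shifted system~\eqref{eq:mildDirichlet}, using the Dirichlet Laplacian in place of the Neumann Laplacian. The crucial observation is that Proposition~\ref{prop:smoothing}(b) and Proposition~\ref{Prop: Square root properties}(b) have exact analogues for the Dirichlet Laplacian $B$ on a bounded Lipschitz domain: the $L^p$--$L^q$ smoothing estimates hold for $\tfrac32-\eps<p\le q<3+\eps$ (now \emph{without} the need to pass to the mean-value-free part, since the Dirichlet semigroup is already exponentially stable on all of $L^p$), and Jerison--Kenig~\cite{Jerison-Kenig} gives $\dom(B^{1/2})=W^{1,q}_0(\Omega)^3$ with $\|\nabla f\|_{L^q}\simeq\|B^{1/2}f\|_{L^q}$ in the same range. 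Maximal regularity of the Dirichlet Laplacian on $L^q$ for all $1<q<\infty$ is classical (Gaussian heat kernel bounds plus Lamberton~\cite{Lamberton}, exactly as in Proposition~\ref{Prop: Maximal regularity of strong operators}(b)). Thus all the linear tools used in Sections~\ref{Sec: Proof of Mild solvability} and~\ref{Sec: Regularity} transfer verbatim.

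First I would fix the function spaces: the ball in $S_q^u(T)\times S_q^d(T)$ around the linear evolution $(e^{-tA}a,\,e^{-tB}\tilde b)$, noting that since $\tilde b\in W^{1,p}_0(\Omega)^3$ we have $e^{-tB}\tilde b\in W^{1,p}_0(\Omega)^3$ with the decay $s^{\frac32(\frac1p-\frac1q)}\|\nabla e^{-sB}\tilde b\|_{L^q}\to 0$ as $s\to0+$, matching $\|e^{-sA}a\|_{L^q}$. Next I would set up the fixed-point map $\Psi(u,\delta)$ defined by the right-hand sides of~\eqref{eq:mildDirichlet} and estimate each nonlinear term. The terms $\IP\mathrm{div}(u\otimes u)$, $\IP\mathrm{div}([\nabla\delta]^\top\nabla\delta)$, $(u\cdot\nabla)\delta$, and $|\nabla\delta|^2\delta$ are estimated exactly as in the Neumann proof; the only genuinely new term is $|\nabla\delta|^2 e$. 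But $e$ is a constant unit vector, so $\||\nabla\delta(s)|^2 e\|_{L^{p/2}}=\|\nabla\delta(s)\|_{L^p}^2$, and feeding this through $\nabla e^{-(t-s)B}$ (the gradient estimate with the extra $(t-s)^{-1/2}$ factor) produces an integrand whose time singularity is $\lesssim (t-s)^{-1/2}s^{-3(\frac1p-\frac1q)}$, integrable near $s=0$ and near $s=t$ since $p<3+\eps$; so it causes no new difficulty and in fact carries a factor $\|\nabla\delta\|^2$ which is small on a short time interval or for small data. Parts~(a), (b), (c), and~(d) then follow by the Banach fixed-point / successive-approximation argument and the continuity arguments of the Neumann case, mutatis mutandis, and the regularity statement (analogue of Theorem~\ref{Regularity}) follows by reading the nonlinearities as $L^s(0,T;W^{-1,p/2}_{\sigma})$- resp.\ $L^s(0,T;L^{p/2})$-data and invoking maximal regularity of $A_{p/2}$ and of the Dirichlet Laplacian $B_{p/2}$.

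For part~(e) one argues as in the Neumann case: setting $\varphi:=|d|^2-1=|\delta+e|^2-1$, a computation using the $d$-equation in~\eqref{eq:LCD} shows $\partial_t\varphi+(u\cdot\nabla)\varphi=\Delta\varphi-2|\nabla d|^2\varphi$ with $\varphi=0$ on $\partial\Omega$ (since $d=e$ there) and $\varphi(0)=|b|^2-1=0$; testing against $\varphi$, using $\div u=0$ and the no-slip condition to kill the transport term, and Gronwall's inequality gives $\varphi\equiv0$. The main obstacle, as in the Neumann analysis, is not any single estimate but ensuring that the pair $(u,\delta)$ stays in a space where $\nabla\delta$ and $u\otimes u$ share the same time-decay exponent so that the Stokes integral closes; here this is automatic because $\dom(B^{1/2})=W^{1,q}_0$ for the Dirichlet Laplacian on Lipschitz domains gives precisely the required gradient decay, and the new inhomogeneous term $|\nabla\delta|^2 e$ is controlled by the square of that same decaying quantity. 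I would therefore only spell out the estimate for the term $|\nabla\delta|^2 e$ and the $\varphi$-argument in detail, and refer to Section~\ref{Sec: Proof of Mild solvability} for the remaining, identical, steps.
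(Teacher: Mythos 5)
Your proposal follows essentially the same route as the paper: change variables to $\delta=d-e$ to reduce to homogeneous Dirichlet boundary conditions, note that the Dirichlet Laplacian on a bounded Lipschitz domain enjoys the same square-root, $L^p$--$L^q$ smoothing, and maximal regularity properties as the Neumann Laplacian (and is in fact simpler since the semigroup is exponentially stable on all of $L^q$, obviating the mean-value-free splitting), run the identical iteration scheme with $|e|=1$ playing the role of $\overline{b}$, and recover $|d|=1$ by the same energy/Gronwall argument using that $\varphi=|d|^2-1$ vanishes on $\partial\Omega$. This matches Section~\ref{Sec: Dirichlet boundary conditions} of the paper in both structure and detail.
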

Concerning $\eps>0$ and the smallness condition in Theorem~\ref{Dirichlet-main} $(c)$, analogous statements to Remark~ \ref{rem:kosdfgjhiodkwßeshjlfg} hold.

\begin{theorem}\label{Dirichlet-regularity}
	For every $s \in (1 , 2)$, the solution in Theorem~\ref{Dirichlet-main} has the following additional regularity properties 
	\begin{align*}
	u & \in W^{1 , s} (0 , T ; W^{-1 , \frac{p}{2}}_{\sigma} (\Omega)) \cap L^s (0 , T ;  W^{1 , \frac{p}{2}}_{0 , \sigma} (\Omega)), \\
	\delta & \in W^{1 , s} (0 , T ; L^{\frac{p}{2}} (\Omega)^3) \cap L^s (0 , T ; \dom(B_{\frac{p}{2}})).
	\end{align*}
\end{theorem}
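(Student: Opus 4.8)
The plan is to freeze the nonlinear terms of~\eqref{eq:mildDirichlet} as inhomogeneities and to invoke maximal regularity, proceeding exactly as in the proof of Theorem~\ref{Regularity} but with the Dirichlet Laplacian in place of the Neumann one. Throughout, $(u,\delta)$ denotes the solution from Theorem~\ref{Dirichlet-main}, considered on a finite time interval $(0,T)$ (if the solution is global, one argues on each compact subinterval), and $s\in(1,2)$ is fixed. First I would observe that, since $3\le p<3+\eps$, both $p/2$ and its conjugate exponent $(p/2)'=p/(p-2)$ lie in the admissible range of exponents from Section~\ref{Sec: Preliminaries}, so that the square root, smoothing, and maximal regularity results recorded there are available at these exponents. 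Using $u\in BC([0,T);L^p_\sigma(\Omega))$, $\nabla\delta\in BC([0,T);L^p(\Omega)^{3\times3})$ and $\delta\in BC([0,T);L^\infty(\Omega)^3)$, one application of H\"older's inequality shows that $F:=u\otimes u+[\nabla\delta]^\top\nabla\delta\in BC([0,T);L^{p/2}(\Omega)^{3\times3})$ and $G:=-(u\cdot\nabla)\delta+|\nabla\delta|^2(\delta+e)\in BC([0,T);L^{p/2}(\Omega)^3)$. Since the composite operator $\PP\div$ maps $L^{p/2}(\Omega)^{3\times3}$ boundedly into $W^{-1,p/2}_\sigma(\Omega)=(W^{1,(p/2)'}_{0,\sigma}(\Omega))^*$ — which, for $p/2$ in the admissible range, follows by duality from the boundedness of the Helmholtz projection at the exponent $(p/2)'$ — it follows that $-\PP\div F\in L^s(0,T;W^{-1,p/2}_\sigma(\Omega))$ and $G\in L^s(0,T;L^{p/2}(\Omega)^3)$, using that $T$ and $s$ are finite.

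For the director field I would then use that the Dirichlet Laplacian $B_{p/2}$ has maximal regularity on $L^{p/2}(\Omega)^3$ — the argument behind Proposition~\ref{Prop: Maximal regularity of strong operators}(b), namely Proposition~\ref{prop:Linfty} together with Lamberton's theorem, carries over verbatim since the Dirichlet semigroup is likewise positivity preserving and $L^\infty$-contractive. The initial value $\tilde b=b-e$ lies in $W^{1,p}_0(\Omega)^3\hookrightarrow W^{1,p/2}_0(\Omega)^3=\dom(B_{p/2}^{1/2})$ by the square root property for the Dirichlet Laplacian, and since $1-1/s<1/2$ we have $\dom(B_{p/2}^{1/2})\hookrightarrow(L^{p/2}(\Omega)^3,\dom(B_{p/2}))_{1-1/s,s}$. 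As $\delta$ is the mild solution of $\delta'+B\delta=G$ with $\delta(0)=\tilde b$, maximal regularity delivers $\delta\in W^{1,s}(0,T;L^{p/2}(\Omega)^3)\cap L^s(0,T;\dom(B_{p/2}))$.

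For the velocity field the inhomogeneity only lives in $W^{-1,p/2}_\sigma(\Omega)$, so I would conjugate the Stokes problem with $A^{1/2}$. By Proposition~\ref{Prop: Square root properties}(a) and the exponential stability from Proposition~\ref{Prop: Shen's theorem}, $A^{1/2}$ is an isomorphism from $W^{1,p/2}_{0,\sigma}(\Omega)$ onto $L^{p/2}_\sigma(\Omega)$ and, dualising the square root property at the exponent $(p/2)'$, it extends to an isomorphism from $L^{p/2}_\sigma(\Omega)$ onto $W^{-1,p/2}_\sigma(\Omega)$; in particular $A^{-1/2}$ is bounded from $W^{-1,p/2}_\sigma(\Omega)$ to $L^{p/2}_\sigma(\Omega)$ and commutes with the Stokes semigroup. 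Applying $A^{-1/2}$ to the first equation in~\eqref{eq:mildDirichlet} — which is legitimate because the Stokes semigroups on $L^{p/2}_\sigma(\Omega)$ and $W^{-1,p/2}_\sigma(\Omega)$ are consistent — shows that $v:=A^{-1/2}u\in BC([0,T);L^{p/2}_\sigma(\Omega))$ is the mild solution of $v'+Av=g$ with $g:=-A^{-1/2}\PP\div F\in L^s(0,T;L^{p/2}_\sigma(\Omega))$ and $v(0)=A^{-1/2}a$. Since $a\in L^p_\sigma(\Omega)\hookrightarrow L^{p/2}_\sigma(\Omega)$, we get $v(0)\in\dom(A^{1/2})=W^{1,p/2}_{0,\sigma}(\Omega)\hookrightarrow(L^{p/2}_\sigma(\Omega),\dom(A_{p/2}))_{1-1/s,s}$, again because $1-1/s<1/2$. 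Maximal regularity of $A_{p/2}$, Proposition~\ref{Prop: Maximal regularity of strong operators}(a), then gives $v\in W^{1,s}(0,T;L^{p/2}_\sigma(\Omega))\cap L^s(0,T;\dom(A_{p/2}))$, and applying the isomorphism $A^{1/2}$ once more — it carries $L^{p/2}_\sigma(\Omega)$ onto $W^{-1,p/2}_\sigma(\Omega)$ and $\dom(A_{p/2})$ onto $\dom(A_{p/2}^{1/2})=W^{1,p/2}_{0,\sigma}(\Omega)$ — yields $u=A^{1/2}v\in W^{1,s}(0,T;W^{-1,p/2}_\sigma(\Omega))\cap L^s(0,T;W^{1,p/2}_{0,\sigma}(\Omega))$, which is the assertion.

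The step I expect to be the main obstacle is the passage to the \emph{weak} Stokes operator in the last paragraph: one has to set up the square root isomorphisms $A^{\pm1/2}$ between $L^{p/2}_\sigma(\Omega)$, $W^{1,p/2}_{0,\sigma}(\Omega)$ and $W^{-1,p/2}_\sigma(\Omega)$ together with their duality behaviour — this is exactly where the restriction $3\le p<3+\eps$ is needed, since it keeps both $p/2$ and $(p/2)'$ in the admissible window of Section~\ref{Sec: Preliminaries} — and to verify carefully that the mild solution furnished by Theorem~\ref{Dirichlet-main} really coincides with the abstract mild solution of the Cauchy problem posed on the larger space $W^{-1,p/2}_\sigma(\Omega)$. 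Everything else is a routine combination of H\"older's inequality, the square root properties, and the maximal regularity statements already available from Sections~\ref{Sec: Preliminaries} and~\ref{Sec: Dirichlet boundary conditions}.
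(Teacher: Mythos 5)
Your proposal is correct and follows essentially the same route as the paper's proof of Theorem~\ref{Regularity}, which Section~\ref{Sec: Dirichlet boundary conditions} declares to carry over unchanged to the Dirichlet case. The treatment of $\delta$ is exactly the paper's: put $F_\delta\in L^s(0,T;L^{p/2}(\Omega)^3)$, check $\tilde b\in(L^{p/2}(\Omega)^3,\dom(B_{p/2}))_{1-1/s,s}$ via the square root property and the embedding $\dom(B_{p/2}^{1/2})\subset(L^{p/2},\dom(B_{p/2}))_{1/2,\infty}$, and invoke maximal regularity of the Dirichlet Laplacian. The only presentational difference is in the velocity equation: the paper introduces the weak Stokes operator $\mathcal{A}_{p/2}$ on $W^{-1,p/2}_{\sigma}(\Omega)$ and proves its maximal regularity by exhibiting the similarity $\mathcal{A}_{p/2}=[A^{1/2}_{(p/2)'}]^*\Phi^{-1}\circ A_{p/2}\circ\Phi[A^{-1/2}_{(p/2)'}]^*$ from Lemma~\ref{Lem: Representation of weak Stokes} and Proposition~\ref{Prop: Analyticity of weak Stokes}, whereas you conjugate the mild formulation directly by $A^{-1/2}$, apply maximal regularity of the strong Stokes operator $A_{p/2}$ on $L^{p/2}_{\sigma}(\Omega)$, and transport back with $A^{1/2}$. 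This is the same similarity argument unfolded, and you correctly flag the square root isomorphisms between $W^{1,p/2}_{0,\sigma}$, $L^{p/2}_{\sigma}$, and $W^{-1,p/2}_{\sigma}$ as the genuinely technical step — that is precisely what Lemma~\ref{Lem: Representation of weak Stokes} and Proposition~\ref{Prop: Analyticity of weak Stokes} supply.

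One small caveat: your fallback to compact subintervals would give only $W^{1,s}_{\mathrm{loc}}$-regularity in the global case $T=\infty$. The paper avoids this by exploiting the exponential decay built into the solution classes (so that $F_u$ and $F_\delta$ already lie in $L^s(0,\infty;\cdot)$ for any $s\in[1,\infty]$), together with the fact that maximal regularity on the half-line is available because $A_{p/2}$ and the Dirichlet $B_{p/2}$ are boundedly invertible.
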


\section{Proof of Theorem~\ref{thm:main}}
\label{Sec: Proof of Mild solvability}
In this section, we prove first the existence and uniqueness of mild solutions in the case of Neumann boundary conditions for the director field $d$, and second using the existence and uniqueness, we prove that $\abs{d(t)}=1$ holds if $\abs{d(0)}=1$. The number $\eps > 0$ denotes the minimal $\eps$ appearing in Section~\ref{Sec: Preliminaries}.
\subsection{Existence and uniqueness}
\label{Subsec: Existence and uniqueness}
We note that the semigroup generated by $- B$ is not exponentially stable on $L^q(\Omega)^3$. However, it is exponentially stable on $L_0^q(\Omega)^3$, see Proposition~\ref{prop:smoothing}, the complementary subspace of which are the constant functions. 
So, in order to achieve the global well-posedness result a change of coordinates is useful to split the exponentially stable part of $- B$ from the constant part, compare~\cite{PSZ} for a far more general method. \par
Note that~\eqref{eq:split} defines bounded projections in all $L^p$-spaces, $p\in [1,\infty]$, defined by 
\begin{align*}
P_c d = \overline{d} \qquad \hbox{and} \qquad P_s d = d_s.
\end{align*}
Now, using~\eqref{eq:split} we can define the new variables
\begin{align*}
x= \overline{d} - \overline{b} \qquad \hbox{and} \qquad y = d_s,
\end{align*}
where $x(0)=0$ and $y(0)= b_s$. Since
\begin{align*}
\Delta x = 0, \quad \nabla x = 0, \quad \Delta y = \Delta d, \quad \nabla y = \nabla d
\end{align*}
and
\begin{align*}
P_c (u \cdot \nabla) y = \frac{1}{|\Omega|}\int_{\Omega} (u \cdot \nabla) y \ \d x = \frac{1}{|\Omega|} \bigg( \int_{\Omega} u \cdot \nabla y_k \ \d x \bigg)_{1 \leq k \leq 3} =  0 \quad \hbox{for } u\in L_{\sigma}^p(\Omega),\ y\in W^{1,p}(\Omega)^3,
\end{align*}
one obtains as a reformulation of~\eqref{eq:PLCD}
\begin{align*}
\left\{
\begin{array}{rll}
\partial_t u   + A u  & = - \PP ( u \cdot \nabla ) u - \PP \div ([\nabla y]^{\top} \nabla y ), \quad &\text{ in } \Omega \times (0,T),  \\
\partial_t y + B y  & = - (u \cdot \nabla) y + P_s \abs{\nabla y}^2 (x+y+\overline{b}), \quad &\text{ in } \Omega \times (0,T), \\
\partial_t x & =    P_c \abs{\nabla y}^2 (x+y+\overline{b}), \quad &\text{ in } \Omega \times (0,T),
\end{array}\right.
\end{align*}
which defines a system in the space
\begin{align*}
L^q_{\sigma}(\Omega) \times L^q_{0}(\Omega)^3 \times \R^3.
\end{align*}

The nonlinear terms are comprised, using the representation $(u \cdot \nabla) u= \div u \otimes u$ for $\div u =0$, by the notation
\begin{align*}
F_u(u, \nabla y) &= - \PP \div (u \otimes u + [\nabla y]^{\top}  \nabla y ), \\
F_y(u, \nabla y, y, x, \overline{b})  &= - (u \cdot \nabla) y + P_s \abs{\nabla y}^2 (x+y+\overline{b}), \\
F_x(\nabla y, y, x, \overline{b}) &=  P_c \abs{\nabla y}^2 (x+y+\overline{b}).
\end{align*}

Starting with the mild formulation of the problem, we can now define the iteration scheme as follows. For $j\in \N_0$, define
\begin{eqnarray*}
u_0 :=e^{-tA}a,& & u_{j+1}  := u_0 + \int_0^t e^{-(t-s)A} F_u(u_j(s), \nabla y_j(s)) \ \d s,\\
y_0 :=e^{-t B}b_s,& & y_{j+1}  := y_0 + \int_0^t e^{-(t-s)B} F_y(u_j(s), \nabla y_j(s), y_j(s), x_j(s), \overline{b}) \ \d s, \\
x_0 =0,& & x_{j+1}  := \int_0^t F_x(\nabla y_j(s), y_j(s), x_j(s), \overline{b}) \ \d s.
\end{eqnarray*}
%
%
We break down the proof in several steps. To begin with, we derive some estimates for the approximating sequences $(u_{j})_{j \in \mathbb{N}}$, $(y_{j})_{j \in \mathbb{N}}$, and $(x_{j})_{j \in \mathbb{N}}$. In the following, the constant $C>0$ will be generic and independent of time.

\subsubsection{Estimates}

For $0<T \leq \infty$ and $\omega > 0$ being the minimum $\omega$ appearing in Proposition~\ref{prop:smoothing}, let us define the quantities
\begin{align*}
k_j^u(T) &:= \sup_{0< s < T} e^{\frac{\omega s}{2}} s^{\tfrac{3}{2}\left(\tfrac{1}{p} - \tfrac{1}{q} \right)} \norm{u_j(s)}_{L^q_{\sigma}(\Omega)}, && k_j^{y}(T) := \sup_{0< s < T} \norm{ y_j(s)}_{L^{\infty}(\Omega)^3}, \\
\ k_j^{\nabla y}(T) &:= \sup_{0< s < T} e^{\frac{\omega s}{2}} s^{\tfrac{3}{2}\left(\tfrac{1}{p} - \tfrac{1}{q} \right)} \norm{\nabla y_j(s)}_{L^q(\Omega)^{3 \times 3}}, &&  k_j^{x}(T) := \sup_{0< s < T} \lvert x_j(s) \rvert.
\end{align*}
In the following, we will inductively show that all of these four quantities are finite and we will derive recursive inequalities relating these quantities at step $j + 1$ with the ones at step $j$ and zero. The finiteness for $j = 0$ is proven in the following lemma.

\begin{lemma}\label{lemma:initialvalues}
For all $3 \leq p \leq q < 3 + \eps$, there exists a constant $C > 0$ such that for all $0 < T \leq \infty$,
\begin{equation}
 k_0^u (T) + k_0^{\nabla y} (T) \leq C \Big(\norm{a}_{L^{p}_{\sigma}(\Omega)} +\norm{\nabla b}_{L^{p}(\Omega)^{3 \times 3}} \Big).
\label{conv-24}
\end{equation}
Moreover, $k_0^x (T) = 0$ and $k_0^y (T) \leq 2 \| b \|_{L^{\infty} (\Omega)^3}$.
\end{lemma}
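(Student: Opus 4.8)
The plan is to bound the four quantities $k_0^u(T)$, $k_0^{\nabla y}(T)$, $k_0^x(T)$, $k_0^y(T)$ one at a time, each being a direct consequence of the smoothing and contractivity estimates collected in Section~\ref{Sec: Preliminaries}; the only points requiring a little care are that all constants must be taken independent of $T$ (so that the bounds persist for $T=\infty$) and that we stay within the admissible range of exponents $3\le p\le q<3+\eps$ where those estimates are available. The identity $k_0^x(T)=0$ is immediate since $x_0\equiv 0$.

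For $k_0^u(T)$ I would apply the first estimate in Proposition~\ref{prop:smoothing}(a) to $u_0(s)=e^{-sA}a$, which is legitimate because $3\le p\le q<3+\eps$ lies in the required range; this gives $\|u_0(s)\|_{L^q_\sigma(\Omega)}\le Ce^{-\omega s}s^{-\frac32(\frac1p-\frac1q)}\|a\|_{L^p_\sigma(\Omega)}$, and after multiplying by $e^{\omega s/2}s^{\frac32(\frac1p-\frac1q)}$ and using $e^{-\omega s/2}\le 1$ one obtains $k_0^u(T)\le C\|a\|_{L^p_\sigma(\Omega)}$ uniformly in $T$. For $k_0^{\nabla y}(T)$ the key observation is that, writing $b=\overline b+b_s$ with $\overline b$ a constant vector, one has $b_s\in L^p_0(\Omega)^3\cap W^{1,p}(\Omega)^3$ with $\nabla b_s=\nabla b$; hence $\nabla y_0(s)=\nabla e^{-sB}b_s$, and the second gradient estimate in Proposition~\ref{prop:smoothing}(b) — applicable precisely because $b_s$ has zero average and $3\le p\le q<3+\eps$ — yields $\|\nabla y_0(s)\|_{L^q(\Omega)^{3\times3}}\le Ce^{-\omega s}s^{-\frac32(\frac1p-\frac1q)}\|\nabla b\|_{L^p(\Omega)^{3\times3}}$. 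The same absorption of the exponential factor gives $k_0^{\nabla y}(T)\le C\|\nabla b\|_{L^p(\Omega)^{3\times3}}$, and adding the two estimates produces~\eqref{conv-24}.

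For $k_0^y(T)$ I would use the $L^\infty$-contractivity of the Neumann heat semigroup from Proposition~\ref{prop:Linfty}, so that $\|y_0(s)\|_{L^\infty(\Omega)^3}=\|e^{-sB}b_s\|_{L^\infty(\Omega)^3}\le\|b_s\|_{L^\infty(\Omega)^3}$, and then estimate $\|b_s\|_{L^\infty(\Omega)^3}\le\|b\|_{L^\infty(\Omega)^3}+|\overline b|\le 2\|b\|_{L^\infty(\Omega)^3}$, since $|\overline b|=\big|\tfrac1{|\Omega|}\int_\Omega b\,\d x\big|\le\|b\|_{L^\infty(\Omega)^3}$. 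This last part is entirely elementary; there is no genuine obstacle in the lemma beyond the bookkeeping mentioned above — in particular, recognizing that passing to the mean-free part $b_s$ is exactly what makes the gradient estimate of Proposition~\ref{prop:smoothing}(b), and hence the $T$-uniform exponential decay, available.
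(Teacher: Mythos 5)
Your proposal is correct and essentially reproduces the paper's argument: Stokes $L^p$--$L^q$ smoothing for $k_0^u$, the gradient smoothing estimate of Proposition~\ref{prop:smoothing}(b) applied to $b_s \in L^p_0(\Omega)^3 \cap W^{1,p}(\Omega)^3$ for $k_0^{\nabla y}$, $L^\infty$-contractivity of the Neumann heat semigroup for $k_0^y$, and $x_0\equiv 0$ by definition. The only cosmetic difference is that you invoke the second gradient estimate of Proposition~\ref{prop:smoothing}(b) directly, whereas the paper re-derives it on the spot via the square-root property $\|\nabla e^{-tB}b_s\|_{L^q}\simeq\|e^{-tB}B^{1/2}b_s\|_{L^q}$ combined with the $L^p$--$L^q$ bound on $L^p_0$; these are the same computation.
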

\begin{proof}

We note that by Proposition~\ref{prop:smoothing}
\begin{equation}
\begin{aligned}
e^{\frac{\omega t}{2}} t^{\frac{3}{2}(\frac{1}{p}-\frac{1}{q})} \norm{ e^{-tA} a}_{L^{q}_{\sigma}(\Omega)}&\leq C e^{-\frac{\omega t}{2}} \norm{  a}_{L^{p}_{\sigma}(\Omega)} \leq C \norm{  a}_{L^{p}_{\sigma}(\Omega)},
\end{aligned}
\notag
\end{equation}
and hence 
\[k^{u}_{0}(T) \leq C \norm{ a}_{L^{p}_{\sigma}(\Omega)}. \]
Concerning the heat semigroup, we use the square root property of the Laplacian first, cf.\@ Proposition~\ref{Prop: Square root properties}, apply then Proposition~\ref{prop:smoothing} together with the fact that $\mathrm{Rg}(B^{\frac{1}{2}}) \subset L^p_0 (\Omega)^3$, and finally Proposition~\ref{Prop: Square root properties} again to deduce
\begin{equation}
\begin{aligned}
e^{\frac{\omega t}{2}} t^{\frac{3}{2}(\frac{1}{p}-\frac{1}{q})} \norm{\nabla e^{-tB} b_{s}}_{L^{q}(\Omega)^{3 \times 3}} &\leq C e^{\frac{\omega t}{2}} t^{\frac{3}{2}(\frac{1}{p}-\frac{1}{q})} \norm{e^{-tB} B^{\frac{1}{2}} b_{s}}_{L^{q}(\Omega)^3}
\leq  C \norm{\nabla b_{s}}_{L^{p}(\Omega)^{3 \times 3}}.
\end{aligned}
\notag
\end{equation}
Therefore, recalling that $\nabla b_{s}=\nabla b $, we have
\begin{align*}
 k^{\nabla y}_{0}(T) &\leq C \norm{\nabla b}_{L^{p}(\Omega)^{3 \times 3}}.
\end{align*}
\indent It is clear that $k_0^x (T)$ vanishes by its very definition. Moreover, for $k_0^y (T)$ we find by Proposition~\ref{prop:Linfty}
\begin{align*}
 \| e^{- t B} b_s \|_{L^{\infty} (\Omega)^3} \leq \| b_s \|_{L^{\infty} (\Omega)^3} &\leq 2 \| b \|_{L^{\infty} (\Omega)^3}. \qedhere
\end{align*}
\end{proof}

Notice that the choice of $W^{1 , p} (\Omega)^3 \cap L^{\infty} (\Omega)^3$ as the initial data space for $d$ is crucial for proving Lemma~\ref{lemma:initialvalues}. \par
For $3 \leq p < q < 3 +\eps$, we derive the following estimate for the sequence $(u_{j})_{j \in \mathbb{N}}$ using Proposition~\ref{prop:smoothing} (a) for $q$ and $\tfrac{q}{2}$, and H\"older's inequality for $\tfrac{2}{q}=\tfrac{1}{q}+\tfrac{1}{q}$
\begin{equation}
\begin{aligned}
\norm{u_{j+1}}_{L_{\sigma}^q(\Omega)}  &\leq  \norm{u_{0}}_{L_{\sigma}^q(\Omega)} + \Big\| \int_0^t  e^{-(t-s)A} \PP \div (u_j\otimes u_j) + e^{-(t-s)A} \PP \div([\nabla y_j]^{\top} \nabla y_j) \ \d s \Big\|_{L_{\sigma}^{q}(\Omega)}  \\
&\leq  \norm{u_{0}}_{L_{\sigma}^q(\Omega)} + C \int_0^t e^{-(t-s)\omega} (t-s)^{-\tfrac{1}{2}-\tfrac{3}{2q}} \big( \norm{u_j \otimes u_j}_{L_{\sigma}^{q/2}(\Omega)} + \norm{[\nabla y_j]^{\top} \nabla y_j}_{L^{q/2}(\Omega)^{3 \times 3}}  \big) \ \d s\\
&\leq  \norm{u_{0}}_{L_{\sigma}^q(\Omega)} + C \int_0^t e^{-t \omega} (t-s)^{-\tfrac{1}{2}-\tfrac{3}{2q}} s^{-3(\frac{1}{p}-\frac{1}{q})} \\
&\qquad \qquad \qquad \qquad \Big\{ \Big( e^{\frac{s\omega}{2}} s^{\frac{3}{2}(\frac{1}{p}-\frac{1}{q})} \norm{u_j}_{L_{\sigma}^{q}(\Omega)}\Big)^2 
  + \Big( e^{\frac{s \omega}{2}} s^{\frac{3}{2}(\frac{1}{p}-\frac{1}{q})}\norm{\nabla y_j}_{L^{q}(\Omega)^{3 \times 3}}  \Big)^2 \Big\} \ \d s \\
&\leq \norm{u_{0}}_{L_{\sigma}^q(\Omega)}+ C \Big(e^{-t \omega} \int_{0}^{t} (t-s)^{-\frac{1}{2}-\frac{3}{2q}} s^{-3(\frac{1}{p}-\frac{1}{q})} \ \d s \Big) [k_{j}^{u}(T)^2+k_{j}^{\nabla y}(T)^2]
\end{aligned}
\notag
\end{equation}
which implies, multiplying by the factor $e^{\frac{\omega t}{2}} t^{\frac{3}{2}(\frac{1}{p}-\frac{1}{q})}$ and taking $\sup_{0< t < T}$, that
\begin{equation}
k^{u}_{j+1}(T) \leq k^{u}_{0}(T)+ C \Big(\sup_{0< t< T} e^{-\frac{\omega t}{2}} t^{\frac{3}{2}(\frac{1}{p}-\frac{1}{q})}\int_{0}^{t} (t-s)^{-\frac{1}{2}-\frac{3}{2q}} s^{-3(\frac{1}{p}-\frac{1}{q})} \ \d s \Big) [k_{j}^{u}(T)^2+k_{j}^{\nabla y}(T)^2].
\label{conv-1}
\end{equation}
Since $3\leq p<q<3+\eps$, it follows that 
$\frac{1}{2}-\frac{3}{2p} \geq 0,\ 1-3(\frac{1}{p}-\frac{1}{q}) >0,\ \frac{1}{2}-\frac{3}{2q}>0$, 
and hence 
\[
\begin{aligned}
\sup_{0< t< T} e^{-\frac{\omega t}{2}} t^{\frac{3}{2}(\frac{1}{p}-\frac{1}{q})}\int_{0}^{t} (t-s)^{-\frac{1}{2}-\frac{3}{2q}} s^{-3(\frac{1}{p}-\frac{1}{q})} \ \d s &=\left(\sup_{0< t< T} e^{-\frac{\omega t}{2}} t^{\frac{1}{2}-\frac{3}{2p}}\right) B \big(1-3\big(\tfrac{1}{p}-\tfrac{1}{q}\big),\tfrac{1}{2}-\tfrac{3}{2q}\big),
 \end{aligned}
 \]
where $B(x,y)$ denotes the beta function for $x,y>0 $. 
Therefore, setting $C_{1}(T):= \sup_{0<t<T} e^{-\frac{\omega t}{2}} t^{\frac{1}{2}-\frac{3}{2p}}$, equation~\eqref{conv-1} turns into
\begin{equation}
\begin{aligned}
k^{u}_{j+1}(T) &\leq k^{u}_{0}(T)+ C C_{1}(T) B \big(1-3\big(\tfrac{1}{p}-\tfrac{1}{q}\big),\tfrac{1}{2}-\tfrac{3}{2q}\big) [k_{j}^{u}(T)^2+k_{j}^{\nabla y}(T)^2].
\end{aligned}
\label{conv-2}
\end{equation}
Similarly, for $\nabla y_{j + 1}$, but now using Proposition~\ref{prop:smoothing} (b) and H\"older's inequality we obtain 
\begin{equation}
\begin{aligned}
\norm{\nabla y_{j+1}}_{L^q(\Omega)^{3 \times 3}}  &\leq  \norm{\nabla y_{0}}_{L^q(\Omega)^{3 \times 3}} + \Big\| \int_0^t  \nabla e^{-(t-s)B} \big( (u_j \cdot \nabla ) y_j -  P_s \abs{\nabla y_j}^2 (x_j+y_j+\overline{b}) \big) \ \d s \Big\|_{L^{q}(\Omega)^{3 \times 3}}  \\
&\leq  \norm{\nabla y_{0}}_{L^q(\Omega)^{3 \times 3}} + C \int_0^t e^{-(t-s)\omega} (t-s)^{-\tfrac{1}{2}-\tfrac{3}{2q}} \\
&\qquad \big(\norm{( u_j \cdot \nabla ) y_j }_{L^{q/2}(\Omega)^3} + \norm{\abs{\nabla y_j}^2}_{L^{q/2}(\Omega)} (\lvert x_j \rvert +\norm{y_j}_{L^{\infty}(\Omega)^3}+\rvert \overline{b} \rvert)  \big) \ \d s\\
&\leq  \norm{\nabla y_{0}}_{L^q(\Omega)^{3 \times 3}} + C \int_0^t e^{-t\omega} (t-s)^{-\frac{1}{2}-\frac{3}{2q}}  s^{-3(\frac{1}{p}-\frac{1}{q})} \ \d s  \\
&\qquad \Big\{ \Big(\sup_{0 < s < T} e^{\frac{s \omega}{2}} s^{\frac{3}{2}({\frac{1}{p}-\frac{1}{q}})} \norm{u_j}_{L_{\sigma}^{q}(\Omega)}\Big)\Big(\sup_{0 < s < T} e^{\frac{s \omega}{2}}  s^{\frac{3}{2}(\frac{1}{p}-\frac{1}{q})}\norm{\nabla y_j}_{L^{q}(\Omega)^{3 \times 3}}  \Big)\\ 
 & \qquad +\Big(\sup_{0 < s < T} e^{\frac{s\omega}{2}} s^{\frac{3}{2}(\frac{1}{p}-\frac{1}{q})}\norm{\nabla y_j}_{L^{q}(\Omega)^{3 \times 3}}  \Big)^2  (\sup_{0 < s < T} \lvert x_j(s) \rvert +\sup_{0 < s < T}\norm{y_j}_{L^{\infty}(\Omega)^3}+ \lvert \overline{b} \rvert) \Big\}.
\end{aligned}
\label{conv-3}
\end{equation}
Proceeding as in the previous case, we obtain
\begin{align}
\begin{aligned}
k^{\nabla y}_{j+1}(T)&\leq k^{\nabla y}_{0}(T) + C C_{1}(T) B \big(1-3\big(\tfrac{1}{p}-\tfrac{1}{q}\big),\tfrac{1}{2}-\tfrac{3}{2p}\big) \\
& \qquad \left[k^{u}_{j}(T) k^{\nabla y}_{j}(T) + k^{\nabla y}_{j}(T)^2 (k^{x}_{j}(T)+k^{y}_{j}(T)+\lvert \overline{b} \rvert) \right].
\end{aligned}
\label{Eq: Recursive inequality for gradient y}
\end{align}
Now, let us also consider analogous estimates for $r$ where $3 \leq \max \{p,\frac{q}{2}\} \leq r \leq q < 3+\eps$. Taking $q$ close enough to $p$, we can assure that $\frac{q}{2} \leq p $ and thus, that the choice $r = p$ is possible. \par
For the sequence $(u_{j})_{j \in \mathbb{N}} $, we obtain similar to the above by Proposition~\ref{prop:smoothing} (a)
\begin{equation}
\begin{aligned}
\norm{u_{j+1}}_{L_{\sigma}^r(\Omega)} 
&\leq  \norm{u_{0}}_{L_{\sigma}^r(\Omega)} + C \int_0^t e^{-t \omega} (t-s)^{-\frac{1}{2}-\frac{3}{2}(\frac{2}{q}-\frac{1}{r})} s^{-3(\frac{1}{p}-\frac{1}{q})} \\
&\qquad \qquad \qquad \qquad \Big\{ \left( e^{\frac{s\omega}{2}} s^{\frac{3}{2}(\frac{1}{p}-\frac{1}{q})} \norm{u_j}_{L_{\sigma}^{q}(\Omega)}\right)^2 
  + \left( e^{\frac{s \omega}{2}} s^{\frac{3}{2}(\frac{1}{p}-\frac{1}{q})}\norm{\nabla y_j}_{L^{q}(\Omega)^{3 \times 3}}  \right)^2 \Big\} \ \d s, \\
\end{aligned}
\notag
\end{equation}
and this implies
\begin{equation}
\begin{aligned}
\sup_{0<t<T} e^{\frac{\omega t}{2}} t^{\frac{3}{2}(\frac{1}{p}-\frac{1}{r})} &\norm{u_{j+1}}_{L_{\sigma}^r(\Omega)} \leq \sup_{0<t<T} e^{\frac{\omega t}{2}} t^{\frac{3}{2}(\frac{1}{p}-\frac{1}{r})} \norm{u_0}_{L_{\sigma}^r(\Omega)} \\
&+ C \left(\sup_{0<t<T} e^{-\frac{\omega t}{2}} t^{\frac{1}{2}-\frac{3}{2p}} \right) B \big(1-3\big(\tfrac{1}{p}-\tfrac{1}{q}\big),\tfrac{1}{2}-\tfrac{3}{2}\big(\tfrac{2}{q}-\tfrac{1}{r} \big)\big)  [k_{j}^{u}(T)^2+k_{j}^{\nabla y}(T)^2].
\end{aligned}
\label{conv-100}
\end{equation}
Similarly, it follows for $\nabla y_{j + 1}$, that
\begin{equation}
\begin{aligned}
&\sup_{0<t<T} e^{\frac{\omega t}{2}} t^{\frac{3}{2}(\frac{1}{p}-\frac{1}{r})} \norm{\nabla y_{j+1}}_{L^r(\Omega)^{3 \times 3}} \leq \sup_{0<t<T} e^{\frac{\omega t}{2}} t^{\frac{3}{2}(\frac{1}{p}-\frac{1}{r})} \norm{\nabla y_0}_{L^r(\Omega)^{3 \times 3}} \\
&+ C \Big(\sup_{0<t<T} e^{- \frac{\omega t}{2}} t^{\frac{1}{2}-\frac{3}{2p}} \Big) B \big(1-3\big(\tfrac{1}{p}-\tfrac{1}{q}\big),\tfrac{1}{2}-\tfrac{3}{2}\big(\tfrac{2}{q}-\tfrac{1}{r} \big) \big)  \Big[k^{u}_{j}(T) k^{\nabla y}_{j}(T) + k^{\nabla y}_{j}(T)^2 (k^{x}_{j}(T)+k^{y}_{j}(T)+\lvert \overline{b} \rvert) \Big].
\end{aligned}
\label{conv-101}
\end{equation}
Next we estimate $\norm{y_{j + 1}}_{L^{\infty}(\Omega)^3}$, by virtue of Propositions~\ref{prop:Linfty} and~\ref{prop:smoothing} (b) and H\"older's inequality, as
\begin{equation}
\begin{aligned}
\norm{y_{j+1}}_{L^{\infty}(\Omega)^3}  &\leq  \|y_{0}\|_{L^{\infty}(\Omega)^3} + \Big\| \int_0^t  e^{-(t-s)B} \big( (u_j \cdot \nabla) y_j -  P_s \abs{\nabla y_j}^2 (x_j+y_j+\overline{b})\big) \; \d s \Big\|_{L^{\infty}(\Omega)^3} \\
&\leq  \norm{y_{0}}_{L^{\infty}(\Omega)^3} + C \int_0^t e^{-(t-s)\omega} (t-s)^{-\frac{3}{q}} \\
&\qquad \qquad  \big(\norm{u_j}_{L_{\sigma}^{q}(\Omega)}\norm{\nabla y_j}_{L^{q}(\Omega)^{3 \times 3}} + \norm{\nabla y_{j}}^2_{L^{q}(\Omega)^{3 \times 3}} (\lvert x_j \rvert +\norm{y_j}_{L^{\infty}(\Omega)^3}+ \lvert \overline{b} \rvert)  \big) \ \d s \\
&\leq  \norm{y_{0}}_{L^{\infty}(\Omega)^3} + C \int_0^t  e^{-t\omega}(t-s)^{-\frac{3}{q}}  s^{-3(\frac{1}{p}-\frac{1}{q})} \ \d s  \\
&\qquad \Big\{ \Big(\sup_{0 < s < T} e^{\frac{s\omega}{2}} s^{\frac{3}{2}(\frac{1}{p}-\frac{1}{q})} \norm{u_j}_{L_{\sigma}^{q}(\Omega)}\Big)\Big(\sup_{0 < s < T} e^{\frac{s \omega}{2}} s^{\frac{3}{2}(\frac{1}{p}-\frac{1}{q})}\norm{\nabla y_{j}}_{L^{q}(\Omega)^{3 \times 3}} \Big)\\ 
 &\qquad + \Big(\sup_{0 < s < T}  e^{\frac{s\omega}{2}} s^{\frac{3}{2}(\frac{1}{p}-\frac{1}{q})}\norm{\nabla y_j}_{L^{q}(\Omega)^{3 \times 3}}  \Big)^2 
  \big( \sup_{0 < s < T} \lvert x_j(s) \rvert + \sup_{0 < s < T}\norm{y_j}_{L^{\infty}(\Omega)^3}+ \lvert \overline{b} \rvert \big) \Big\},
\end{aligned}
\label{conv-5}
\end{equation}
and hence, setting $C_{2}(T):=\sup_{0<t<T} e^{-\omega t} t^{1-\frac{3}{p}}$, we have
\begin{equation}
\begin{aligned}
k^{y}_{j+1}(T)&\leq k^{y}_{0}(T)+ C C_{2}(T) B \big(1-3 \big(\tfrac{1}{p}-\tfrac{1}{q} \big),1-\tfrac{3}{q} \big) \Big[k^{u}_{j}(T) k^{\nabla y}_{j}(T) + k^{\nabla y}_{j}(T)^2 (k^{x}_{j}(T)+k^{y}_{j}(T)+ \lvert \overline{b} \rvert ) \Big].
\end{aligned}
\label{conv-6}
\end{equation}
Finally for $\lvert x_{j + 1} \rvert$ one obtains using H\"older's inequality and the embedding $L^q(\Omega)\hookrightarrow L^2(\Omega)$
\begin{equation}
\begin{aligned}
 \lvert x_{j+1} \rvert &\leq \int_{0}^{t} \Big\vert \frac{1}{\vert \Omega \vert} \int_{\Omega} \vert \nabla y_{j} \vert^2 (x_{j}+y_{j}+\overline{b})  \Big\vert \ \d s 
 \leq \frac{1}{\vert \Omega \vert} \int_0^t \norm{\nabla y_j}_{L^2(\Omega)^{3 \times 3}}^2 ( \lvert x_{j} \rvert +\norm{y_{j}}_{L^{\infty}(\Omega)^3}+ \lvert \overline{b} \rvert ) \ \d s\\
 &\leq C \int_0^t \norm{\nabla y_j}_{L^q(\Omega)^{3 \times 3}}^2 ( \lvert x_{j} \rvert +\norm{y_{j}}_{L^{\infty}(\Omega)^3}+ \lvert \overline{b} \rvert ) \; \d s\\
&\leq C \int_0^t  e^{-s \omega} s^{-3(\frac{1}{p}-\frac{1}{q})} \; \d s \left(\sup_{0 < s < T} e^{\frac{s \omega}{2}} s^{\frac{3}{2}(\frac{1}{p}-\frac{1}{q})}\norm{\nabla y_j}_{L^{q}(\Omega)^{3 \times 3}} \right)^2 
\big( \sup_{0 < s < T} \lvert x_j \rvert +\sup_{0 < s < T} \norm{y_j}_{L^{\infty}(\Omega)^3} + \lvert \overline{b} \rvert \big)
\end{aligned}
\label{conv-7}
\end{equation}
and therefore with $C_{3}(T):=\sup_{0<t<T} \int_0^t e^{-s \omega} s^{-3(\frac{1}{p}-\frac{1}{q})} \; \d s $
\begin{equation}
\begin{aligned}
&k^{x}_{j+1}(T) \leq C C_{3}(T) k^{\nabla y}_{j}(T)^2 (k^{x}_{j}(T)+ k^{y}_{j}(T)+ \lvert \overline{b} \rvert ).
\end{aligned}
\label{conv-8}
\end{equation}

For $0<T\leq \infty$, set $\tilde{C}_T :=\max \{C_{1}(T), C_{2}(T), C_{3}(T) \}$.

\begin{remark}\label{rem:CT}
Note that $C_i(T)$, $i=1,2,3$, are continuous as functions in $T$, uniformly bounded on $(0,\infty)$, and monotonically increasing. In particular, 
\begin{itemize}
\item[(a)] for $T=+\infty$, $C_i(T)$ are well-defined constants, and
\item[(b)] for $ p>3$, $\lim_{T \to 0} C_i(T)=0$.
\end{itemize}
Notice that $\tilde{C}_T$ inherits these properties.
\end{remark}

Summarising the estimates~\eqref{conv-2},~\eqref{Eq: Recursive inequality for gradient y},~\eqref{conv-6}, and~\eqref{conv-8}, we arrive at the inequalities
\begin{equation}
\begin{aligned}
&k^{u}_{j+1}(T) \leq k^{u}_{0}(T)+ C \tilde{C}_T [k_{j}^{u}(T)^2+k_{j}^{\nabla y}(T)^2], \\
&k^{\nabla y}_{j+1}(T) \leq k^{\nabla y}_{0}(T)+ C \tilde{C}_T \Big[k^{u}_{j}(T) k^{\nabla y}_{j}(T) + k^{\nabla y}_{j}(T)^2 (k^{x}_{j}(T)+k^{y}_{j}(T)+ \lvert \overline{b} \rvert ) \Big], \\
&k^{y}_{j+1}(T) \leq k^{y}_{0}(T)+ C \tilde{C}_T \Big[k^{u}_{j}(T) k^{\nabla y}_{j}(T) + k^{\nabla y}_{j}(T)^2 (k^{x}_{j}(T)+k^{y}_{j}(T)+ \lvert \overline{b} \rvert ) \Big], \\
&k^{x}_{j+1}(T) \leq C \tilde{C}_T k^{\nabla y}_{j}(T)^2 (k^{x}_{j}(T)+ k^{y}_{j}(T)+ \lvert \overline{b} \rvert ).
\end{aligned}
\label{conv-9}
\end{equation}
By virtue of Lemma~\ref{lemma:initialvalues} this proves by induction that $k^{u}_{j+1}(T)$, $k^{\nabla y}_{j+1}(T)$, $k^{y}_{j+1}(T)$, and $k^{x}_{j+1}(T)$ are finite. Let us introduce the notations
\begin{equation}
k^{q}_{j}:= k^{u}_{j} + k^{\nabla y}_{j}, \quad k^{\infty}_{j}:= k^{y}_{j} + k^{x}_{j}.
\notag
\end{equation}

\subsubsection{Estimates on the differences}
Next, let us define 
\begin{align*}
\begin{aligned}
 &W_{j}(t) := u_{j+1}(t)-u_{j}(t), &&Z_{j}(t) :=\nabla y_{j+1}(t)-\nabla y_{j}(t),  \\
 &Y_{j}(t) :=y_{j+1}(t)-y_{j}(t) , &&X_{j}(t) :=x_{j+1}(t)-x_{j}(t),
\end{aligned}
\notag
\end{align*}
and the corresponding time-weighted quantities
\begin{align*}
\delta_j^u(T) &:= \sup_{0< s < T} e^{\frac{\omega s}{2}} s^{\tfrac{3}{2}\left(\tfrac{1}{p} - \tfrac{1}{q} \right)} \norm{W_j(s)}_{L^q_{\sigma}(\Omega)}, \quad &&\delta_j^{\nabla y}(T) := \sup_{0< s < T} e^{\frac{\omega s}{2}} s^{\tfrac{3}{2}\left(\tfrac{1}{p} - \tfrac{1}{q} \right)} \norm{Z_j(s)}_{L^q(\Omega)^{3 \times 3}}, \\
\delta_j^{y}(T) &:= \sup_{0< s < T} \norm{Y_j(s)}_{L^{\infty}(\Omega)^3}, \quad && \delta_j^{x}(T) := \sup_{0< s < T} \lvert X_j(s) \rvert.
\end{align*}
Now, using the bi-linearity of the tensor product
\begin{equation}
\begin{aligned}
W_{j}(t)
= - \int_{0}^{t} e^{-(t-s)A} \PP \div\Big[  (W_{j-1} \otimes u_{j} +u_{j-1} \otimes W_{j-1} ) +  (Z_{j-1}^{\top} \nabla y_{j}+ [\nabla y_{j-1}]^{\top} Z_{j-1}) \Big] \ \d s
\end{aligned}
\notag
\end{equation}
and therefore 
proceeding as in the derivation of~\eqref{conv-1} and~\eqref{conv-9}, we arrive at
\begin{equation}
\begin{aligned}
\delta^{u}_{j}(T) \leq C \tilde{C}_{T} \Big[\delta^{u}_{j-1}(T)(k^{q}_{j}(T)+k^{q}_{j-1}(T)) + \delta^{\nabla y}_{j-1}(T)(k^{q}_{j}(T)+k^{q}_{j-1}(T)) \Big].
\end{aligned}
\label{conv-10}
\end{equation}
To estimate $Z_{j}$, we write
\begin{align*}
 Z_j &= \int_{0}^{t} \nabla e^{-(t-s)B} [W_{j-1} \cdot \nabla y_{j}+u_{j-1} \cdot Z_{j-1}] -P_{s} \big[ \big\{ (\lvert \nabla y_j \rvert - \lvert \nabla y_{j - 1} \rvert) \vert \nabla y_{j} \vert \\
 &\qquad+ \vert \nabla y_{j-1} \vert (\lvert \nabla y_j \rvert - \lvert \nabla y_{j - 1} \rvert) \big\}(x_{j}+y_{j}+\overline{b}) + \vert \nabla y_{j-1} \vert^2 \{(x_{j} - x_{j - 1} +y_{j} - y_{j - 1}) \} \big] \; \d s
\end{align*}
and estimate analogously to~\eqref{conv-3} and~\eqref{conv-9} using in addition $\big\lvert \lvert \nabla y_j \rvert - \lvert \nabla y_{j - 1} \rvert \big\rvert \leq \lvert Z_{j - 1} \rvert$. This yields the inequality
\begin{equation}
\begin{aligned}
\delta^{\nabla y}_{j}(T) &\leq C \tilde{C}_{T} \Big[\delta^{u}_{j-1}(T) k^{q}_{j}(T) +k^{q}_{j-1}(T) \delta^{\nabla y}_{j-1}(T) \\
&\quad + \big( \delta^{\nabla y}_{j-1}(T) k^{q}_{j}(T)+ k^{q}_{j-1}(T) \delta^{\nabla y}_{j-1}(T) \big) (2 k^{\infty}_{j}(T)+ \lvert \overline{b} \lvert)+k^{q}_{j-1}(T)^2 \big(\delta^{x}_{j-1}(T)+\delta^{y}_{j-1}(T) \big) \Big].
\end{aligned}
\label{conv-12}
\end{equation} 
The term $Y_j$ can be written similarly to $Z_j$ but without the gradient in front of the semigroup. Thus, following~\eqref{conv-5} and the derivation of~\eqref{conv-9} it follows
\begin{equation}
\begin{aligned}
\delta^{y}_{j}(T) &\leq C \tilde{C}_{T} \Big[\delta^{u}_{j-1}(T) k^{q}_{j}(T) +k^{q}_{j-1}(T) \delta^{\nabla y}_{j-1}(T) \\
&\qquad  + \big( \delta^{\nabla y}_{j-1}(T) k^{q}_{j}(T)+ k^{q}_{j-1}(T) \delta^{\nabla y}_{j-1}(T) \big) (2 k^{\infty}_{j}(T)+ \lvert \overline{b} \rvert)+k^{q}_{j-1}(T)^2 \big(\delta^{x}_{j-1}(T)+\delta^{y}_{j-1}(T) \big) \Big].
\end{aligned}
\label{conv-13}
\end{equation}
In order to deal with the term $X_{j}$, we observe that
\begin{equation}
\begin{aligned}
X_{j}(t) 
&= \int_{0}^{t}  \frac{1}{\vert \Omega \vert} \int_{\Omega} \big\{ (\lvert \nabla y_j \rvert - \lvert \nabla y_{j - 1} \rvert) \vert \nabla y_{j} \vert+\vert \nabla y_{j-1} \vert (\lvert \nabla y_j \rvert - \lvert \nabla y_{j - 1} \rvert) \big\} (x_{j}+y_{j}+\overline{b}) \\
 &\qquad \qquad + \vert \nabla y_{j-1} \vert^{2}(x_j - x_{j-1} + y_j - y_{j-1}) \; \d x \; \d s
\end{aligned}
\notag
\end{equation}
and hence
by proceeding as in~\eqref{conv-7} and the derivation of~\eqref{conv-9}, we deduce
\begin{equation}
\begin{aligned}
\delta^{x}_{j}(T) &\leq C \tilde{C}_{T} \Big[ \big(\delta^{\nabla y}_{j-1}(T) k^{q}_{j}(T)+ k^{q}_{j-1}(T) \delta^{\nabla y}_{j-1}(T) \big) (2 k^{\infty}_{j}(T) + \lvert \overline{b} \rvert)\\
&\qquad \qquad \qquad \qquad \qquad \qquad \qquad +k^{q}_{j-1}(T)^2 \big(\delta^{x}_{j-1}(T)+\delta^{y}_{j-1}(T) \big) \Big].
\end{aligned}
\label{conv-14}
\end{equation}
Using the first two inequalities in~\eqref{conv-9}, an estimate for $k_{j + 1}^q$ in terms of $k_0^q$, $k_j^q$, and $k_j^{\infty}$ can be deduced by Young's inequality and by absorbing the factors into the constant $C>0$ as follows 
\begin{equation}
\begin{aligned}
k^{q}_{j+1}(T)&\leq k^{q}_{0}(T)+ C \tilde{C}_{T} \Big[k^{u}_{j}(T)^2+ k^{u}_{j}(T) k^{\nabla y}_{j}(T)+k^{\nabla y}_{j}(T)^2 (1+k^{\infty}_{j}(T)+ \lvert \overline{b} \rvert) \Big] \\
&\leq k^{q}_{0}(T)+ C \tilde{C}_{T} \Big[k^{q}_{j}(T)^2+k^{q}_{j}(T)^2(1+k^{\infty}_{j}(T) + \lvert \overline{b} \rvert) \Big]\\
&\leq k^{q}_{0}(T)+ C \tilde{C}_{T} \Big[k^{q}_{j}(T)^2(1+k^{\infty}_{j}(T) + \lvert \overline{b} \rvert) \Big].
\end{aligned}
\label{conv-15}
\end{equation}
Similarly, by virtue of the last two inequalities in~\eqref{conv-9}, we obtain
\begin{equation}
\begin{aligned}
k^{\infty}_{j+1}(T) &\leq k^{\infty}_{0}(T)+C \tilde{C}_{T} \Big[k^{u}_{j}(T) k^{\nabla y}_{j}(T)+ k^{\nabla y}_{j}(T)^2(k^{\infty}_{j}(T)+ \lvert \overline{b} \rvert ) \Big] \\
&\leq k^{\infty}_{0}(T)+C \tilde{C}_{T} \Big[k^{q}_{j}(T)^2 (1+k^{\infty}_{j}(T)+ \lvert \overline{b} \rvert ) \Big].
\end{aligned}
\label{conv-16}
\end{equation}
Let us denote $\delta_{j}(T):= \delta^{u}_{j}(T)+\delta^{\nabla y}_{j}(T)+\delta^{y}_{j}(T)+\delta^{x}_{j}(T)$. Now, suppose that there are constants $K_{1}, K_{2}>0 $ (possibly depending on $T$), without loss of generality let $K_{2}\geq 1$, satisfying
\begin{align}
\label{Eq: Boundedness of sequences}
 k^{q}_{0}(T)\leq \frac{K_{1}}{2},\quad k^{q}_{j}(T) < K_{1},\quad k^{\infty}_{0}(T) \leq \frac{K_{2}}{2},\quad k^{\infty}_{j}(T) < K_{2},\quad  \lvert \overline{b} \rvert \leq K_{2}.
\end{align}
Then,~\eqref{conv-10} implies
\begin{equation}
\begin{aligned}
\delta^{u}_{j}(T)&\leq C \tilde{C}_{T} (k^{q}_{j}(T)+k^{q}_{j-1}(T)) \delta_{j-1}(T) < 2 K_{1} C \tilde{C}_{T} \delta_{j-1}(T).
\end{aligned}
\label{conv-17}
\end{equation}
From~\eqref{conv-12}, we have
\begin{equation}
\begin{aligned}
\delta^{\nabla y}_{j}(T)&\leq C \tilde{C}_{T} \Big[(k^{q}_{j}(T)+k^{q}_{j-1}(T))+(k^{q}_{j}(T)+k^{q}_{j-1}(T))(2 k^{\infty}_{j}(T) + \lvert \overline{b} \rvert)+k^{q}_{j-1}(T)^2 \Big] \delta_{j-1}(T) \\
&<  C \tilde{C}_{T} [2 K_{1}+6 K_{1} K_{2}+ K_{1}^2] \delta_{j-1}(T).
\end{aligned}
\label{conv-18}
\end{equation}
Similarly,~\eqref{conv-13} implies
 \begin{equation}
\begin{aligned}
\delta^{ y}_{j}(T)
&< C \tilde{C}_{T} [2 K_{1}+6 K_{1} K_{2}+ K_{1}^2] \delta_{j-1}(T)
\end{aligned}
\label{conv-19}
\end{equation}
and~\eqref{conv-14} implies
\begin{equation}
\begin{aligned}
\delta^{x}_{j}(T)
&< C \tilde{C}_{T} [6 K_{1} K_{2}+ K_{1}^2] \delta_{j-1}(T).
\end{aligned}
\label{conv-20}
\end{equation}
Combining~\eqref{conv-17}-\eqref{conv-20}, we obtain
\begin{equation}
\begin{aligned}
\delta_{j}(T) < 6 C \tilde{C}_{T} [2K_{1}+6K_{1}K_{2}+K_{1}^2] \delta_{j-1}(T).
\end{aligned}
\label{conv-21}
\end{equation}
Therefore, if we can show that $6 C \tilde{C}_{T} [2K_{1}+6K_{1}K_{2}+K_{1}^2]<1 $, then~\eqref{conv-21} gives us a contraction, i.e., for some $\theta \in (0 , 1)$
\begin{align}
\label{Eq: Contraction of delta}
 \delta_j < \theta \delta_{j - 1} \qquad (j \in \N).
\end{align}
Next, note that if $6C \tilde{C}_{T} K_{1} K_{2} <1$, then~\eqref{conv-15} together with~\eqref{Eq: Boundedness of sequences} implies
\begin{equation}
k^{q}_{j+1}(T) \leq k^{q}_{0}(T)+3 C \tilde{C}_{T} K_{1}^2 K_{2} < \frac{K_{1}}{2}+ \frac{K_{1}}{2} = K_{1}. 
\notag
\end{equation}
Similarly, if $6C \tilde{C}_{T} K_{1}^2  <1 $, then~\eqref{conv-16} together with~\eqref{Eq: Boundedness of sequences} implies
\begin{equation}
k^{\infty}_{j+1}(T) \leq k^{\infty}_{0}(T)+3 C \tilde{C}_{T} K_{1}^2 K_{2} < \frac{K_{2}}{2}+ \frac{K_{2}}{2} = K_{2}. 
\notag
\end{equation}
We further note that both of the conditions above are also fulfilled if $6 C \tilde{C}_{T} [2K_{1}+6K_{1}K_{2}+K_{1}^2]<1 $. Let us define 
\begin{align}
\label{Eq: Definition of bounds}
 K_{1}:= 2 k^{q}_{0}(T) \quad \text{and} \quad K_{2}:= \max\{2 k^{\infty}_{0}(T), \lvert \overline{b} \rvert\}. 
\end{align}
We already know by Lemma~\ref{lemma:initialvalues} that $k_0^{\infty} \leq 2 \| b \|_{L^{\infty} (\Omega)^3}$ so that $K_{2} \leq 4 \| b \|_{L^{\infty} (\Omega)^3}$. Now,
\begin{equation}
\begin{aligned}
6C \tilde{C}_{T} \Big[2K_{1}+6K_{1}K_{2}+K_{1}^2 \Big] &\leq 6C \tilde{C}_{T} \Big[3 K + 24 K \| b \|_{L^{\infty} (\Omega)^3} \Big] 
= 144 C \tilde{C}_{T} K (1 + \| b \|_{L^{\infty} (\Omega)^3}), 
\end{aligned}
\notag
\end{equation}
where $K:=\max \{K_{1},K_{1}^2 \} $.
Therefore, if 
\begin{equation}
144 C \tilde{C}_{T} K (1 + \| b \|_{L^{\infty} (\Omega)^3}) < 1,
\label{conv-22}
\end{equation} 
then all the conditions are fulfilled and we have a contraction in terms of $\delta_{j}$ in the sense of~\eqref{Eq: Contraction of delta}.
%
\subsubsection{Conditions on the initial data and global existence}

In the following, we show that~\eqref{conv-22} is valid under the present hypotheses of the theorem. The following lemma is crucial for the validity of~\eqref{conv-22} for small times $T$.

\begin{lemma}\label{small}
If $q > p$, the term $k^{q}_{0}(T) $ admits the following behaviour 
\begin{equation}
k^{q}_{0}(T) \rightarrow 0 \quad \text{as}\quad T \rightarrow 0. 
\label{conv-23}
\end{equation}
\end{lemma}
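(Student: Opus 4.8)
The plan is to exploit that the temporal exponent $\alpha:=\tfrac{3}{2}\big(\tfrac1p-\tfrac1q\big)$ is \emph{strictly positive} when $q>p$, so that $s^{\alpha}\to 0$ as $s\to 0+$, combined with a density argument: on a dense subset of the initial data the relevant semigroup acts \emph{boundedly} into the target space (producing a factor $s^{\alpha}\to 0$), while on the small remainder the smoothing estimates of Proposition~\ref{prop:smoothing} apply, and there the weight $s^{\alpha}$ exactly cancels the singular factor, leaving only the small $L^p$-norm. Since $k_0^q(T)=k_0^u(T)+k_0^{\nabla y}(T)$ is monotone in $T$, it is enough to prove $\limsup_{s\to 0+} e^{\omega s/2}s^{\alpha}\|e^{-sA}a\|_{L^q_{\sigma}(\Omega)}=0$ and $\limsup_{s\to 0+} e^{\omega s/2}s^{\alpha}\|\nabla e^{-sB}b_s\|_{L^q(\Omega)^{3\times 3}}=0$; note that $e^{\omega s/2}\le e^{\omega T/2}$ is harmless for small $T$.

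\emph{Stokes part.} Fix $\eta>0$. By density of $C^{\infty}_{c,\sigma}(\Omega)$ in $L^p_{\sigma}(\Omega)$, write $a=a_1+a_2$ with $a_1\in C^{\infty}_{c,\sigma}(\Omega)\subset L^q_{\sigma}(\Omega)$ and $\|a_2\|_{L^p_{\sigma}(\Omega)}<\eta$. Since $-A$ generates a bounded analytic semigroup on $L^q_{\sigma}(\Omega)$ (Proposition~\ref{Prop: Shen's theorem}), one has $\|e^{-sA}a_1\|_{L^q_{\sigma}(\Omega)}\le C\|a_1\|_{L^q_{\sigma}(\Omega)}$, hence $e^{\omega s/2}s^{\alpha}\|e^{-sA}a_1\|_{L^q_{\sigma}(\Omega)}\le C s^{\alpha}\|a_1\|_{L^q_{\sigma}(\Omega)}\to 0$; while by Proposition~\ref{prop:smoothing}(a), $e^{\omega s/2}s^{\alpha}\|e^{-sA}a_2\|_{L^q_{\sigma}(\Omega)}\le C\|a_2\|_{L^p_{\sigma}(\Omega)}<C\eta$. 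Taking $\limsup_{s\to 0+}$ and then $\eta\to 0$ gives the first limit.

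\emph{Director part.} First reduce to a genuine $L^p$--$L^q$ smoothing estimate via the square-root property. Since $b\in W^{1,p}(\Omega)^3$ implies $b_s\in W^{1,p}(\Omega)^3=\dom(B^{1/2})$ (Proposition~\ref{prop:sqrt}(b)) and $\mathrm{Rg}(B^{1/2})\subset L^p_0(\Omega)^3$, set $g:=B^{1/2}b_s\in L^p_0(\Omega)^3$; then $\|\nabla e^{-sB}b_s\|_{L^q(\Omega)^{3\times 3}}\simeq\|B^{1/2}e^{-sB}b_s\|_{L^q(\Omega)^3}=\|e^{-sB}g\|_{L^q(\Omega)^3}$ and $\|g\|_{L^p(\Omega)^3}\simeq\|\nabla b\|_{L^p(\Omega)^{3\times 3}}$. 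As $\Omega$ is bounded, $L^q_0(\Omega)^3$ is dense in $L^p_0(\Omega)^3$ (apply the bounded projection $P_s$ to the dense inclusion $L^q(\Omega)^3\hookrightarrow L^p(\Omega)^3$), so split $g=g_1+g_2$ with $g_1\in L^q_0(\Omega)^3$ and $\|g_2\|_{L^p(\Omega)^3}<\eta$. Using that $(e^{-sB})_{s\ge 0}$ is contractive on $L^q(\Omega)^3$ (Proposition~\ref{prop:Linfty}) we get $e^{\omega s/2}s^{\alpha}\|e^{-sB}g_1\|_{L^q(\Omega)^3}\le C s^{\alpha}\|g_1\|_{L^q(\Omega)^3}\to 0$, while the first estimate of Proposition~\ref{prop:smoothing}(b) gives $e^{\omega s/2}s^{\alpha}\|e^{-sB}g_2\|_{L^q(\Omega)^3}\le C\|g_2\|_{L^p(\Omega)^3}<C\eta$. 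Again letting $s\to 0+$ and then $\eta\to 0$ proves the second limit, and adding the two contributions yields $\lim_{T\to 0}k_0^q(T)=0$.

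The argument is essentially routine; the only point needing attention is the bookkeeping of temporal weights. The smoothing estimates must be applied to the \emph{small} piece, so that the singular weight is absorbed against $s^{\alpha}$ and leaves a bounded constant times a small norm, whereas the smooth piece must be treated by \emph{mere boundedness} of the semigroup into the target space, leaving the vanishing factor $s^{\alpha}$. This is also why, for the director field, one first composes through $B^{1/2}$ to land in $L^p_0(\Omega)^3$, where Proposition~\ref{prop:smoothing}(b) is available with the exponentially decaying, weight-cancelling estimate.
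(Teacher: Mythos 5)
Your proof is correct. The Stokes part is essentially identical to the paper's: decompose the initial datum into a smooth piece (treated by mere boundedness of the semigroup on $L^q_\sigma$, so the factor $s^{\alpha}$ survives and kills the term) and a small remainder in $L^p_\sigma$ (treated by the $L^p$--$L^q$ smoothing estimate, which absorbs $s^{\alpha}$ and leaves only a small constant). For the director part you take a genuinely different route: the paper approximates $b_s$ in $W^{1,p}(\Omega)^3$ by smooth average-free functions (its Lemma~\ref{approx}), then applies the two gradient estimates of Proposition~\ref{prop:smoothing}(b) directly to $\nabla e^{-tB}$; you instead first pass through the square-root property to replace $\|\nabla e^{-sB}b_s\|_{L^q}$ by $\|e^{-sB}g\|_{L^q}$ with $g=B^{1/2}b_s\in L^p_0(\Omega)^3$, and then run the same density argument as for the Stokes part, splitting $g$ in $L^p_0$ and using contractivity on $L^q$ for the regular piece. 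This reduces the director estimate to the exact same structural form as the Stokes estimate and avoids any need for the $W^{1,p}$-approximation lemma, at the mild cost of invoking density of $L^q_0$ in $L^p_0$ and the commutation $B^{1/2}e^{-sB}=e^{-sB}B^{1/2}$; both are unproblematic, and both facts ($\mathrm{Rg}(B^{1/2})\subset L^p_0$ and the square-root equivalence) are already used in the paper's Lemma~\ref{lemma:initialvalues}. Either route is fine; yours is arguably slightly cleaner in that it treats the two components of $k^q_0$ uniformly.
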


In the following lemma, we recall that a Sobolev function with average zero can be approximated by a sequence of smooth functions with average zero. This short proof is left for the reader.

\begin{lemma}\label{approx}
Let $b \in W^{1 , p} (\Omega)^3$ be a function with average zero. Then there exists a sequence $(b_{n})_{n \in \N} \subset C^{\infty} (\overline{\Omega})^3 $ such that 
\[b_{n} \to b \quad \text{in} \quad W^{1 , p} (\Omega)^3 \]
as $n \to \infty$ and $b_{n} $ has average zero for every $n \in \N$.
\end{lemma}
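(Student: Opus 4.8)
\textbf{Proof proposal for Lemma~\ref{approx}.}
The plan is to start from an arbitrary smooth approximating sequence and then correct it by subtracting its mean, exploiting the fact that the correction is a constant, hence smooth and gradient-free. First I would recall that, since $\Omega \subset \R^3$ is a bounded Lipschitz domain, it is a $W^{1,p}$-extension domain, and consequently $C^{\infty}(\overline{\Omega})^3$ is dense in $W^{1,p}(\Omega)^3$; thus there is a sequence $(\widetilde{b}_n)_{n \in \N} \subset C^{\infty}(\overline{\Omega})^3$ with $\widetilde{b}_n \to b$ in $W^{1,p}(\Omega)^3$ as $n \to \infty$.

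Next I would pass to the mean-free parts. Set
\begin{align*}
 b_n := \widetilde{b}_n - \frac{1}{\lvert \Omega \rvert} \int_{\Omega} \widetilde{b}_n \; \d x \qquad (n \in \N).
\end{align*}
Each $b_n$ is the difference of a function in $C^{\infty}(\overline{\Omega})^3$ and a constant vector, hence $b_n \in C^{\infty}(\overline{\Omega})^3$, and by construction $\int_{\Omega} b_n \, \d x = 0$ for every $n$. It remains to check $b_n \to b$ in $W^{1,p}(\Omega)^3$. Since $W^{1,p}(\Omega)^3 \hookrightarrow L^p(\Omega)^3$ and $\Omega$ has finite measure, $\widetilde{b}_n \to b$ in $W^{1,p}$ implies $\widetilde{b}_n \to b$ in $L^1(\Omega)^3$, so the averages converge: $\lvert \Omega \rvert^{-1} \int_{\Omega} \widetilde{b}_n \, \d x \to \lvert \Omega \rvert^{-1} \int_{\Omega} b \, \d x = 0$, using the hypothesis that $b$ has average zero. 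The gradient of the subtracted constant is zero, so $\nabla b_n = \nabla \widetilde{b}_n \to \nabla b$ in $L^p(\Omega)^{3 \times 3}$, while $b_n \to b$ in $L^p(\Omega)^3$ because $\widetilde{b}_n \to b$ in $L^p$ and the constant corrections tend to $0$ in $L^p(\Omega)^3$ (a constant vector $c_n \to 0$ in $\R^3$ gives $\lVert c_n \rVert_{L^p(\Omega)^3} = \lvert \Omega \rvert^{1/p} \lvert c_n \rvert \to 0$). Combining the two, $b_n \to b$ in $W^{1,p}(\Omega)^3$, which completes the proof.

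There is essentially no obstacle here: the only input beyond elementary manipulations is the density of $C^{\infty}(\overline{\Omega})^3$ in $W^{1,p}(\Omega)^3$ on Lipschitz domains, which is classical; everything else is a one-line bookkeeping argument about the mean value functional being continuous on $L^p(\Omega)^3$.
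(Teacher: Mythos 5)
Your proof is correct and follows essentially the same route as the paper's (the paper leaves the proof to the reader, but the intended argument is exactly this): take a smooth approximating sequence, subtract its mean, and observe that the subtracted constants tend to zero because $b$ has average zero while the gradients are unchanged. No gaps.
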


\begin{proof}[Proof of Lemma~\ref{small}]
Let $(a^j)_{j \in \mathbb{N}}$ be a sequence in $C_{c , \sigma}^{\infty}(\Omega)$ such that $a^j \rightarrow a$ in $L^p_{\sigma} (\Omega)$. Then,
\[ e^{\frac{\omega t}{2}} t^{\frac{3}{2}(\frac{1}{p}-\frac{1}{q})} \norm{e^{-tA} a}_{L^{q}_{\sigma}(\Omega)} \leq e^{\frac{\omega t}{2}} t^{\frac{3}{2}(\frac{1}{p}-\frac{1}{q})} \norm{e^{-tA} (a - a^{j})}_{L^{q}_{\sigma}(\Omega)}+ e^{\frac{\omega t}{2}} t^{\frac{3}{2}(\frac{1}{p}-\frac{1}{q})} \norm{e^{-tA} a^{j}}_{L^{q}_{\sigma}(\Omega)}.\]
Estimate by virtue of Proposition~\ref{prop:smoothing}
\begin{equation}
\begin{aligned}
e^{\frac{\omega t}{2}} t^{\frac{3}{2}(\frac{1}{p}-\frac{1}{q})} \norm{e^{-tA} (a - a^{j})}_{L^{q}_{\sigma}(\Omega)} &\leq C e^{-\frac{\omega t}{2}} t^{\frac{3}{2}(\frac{1}{p}-\frac{1}{q})} \cdot t^{-\frac{3}{2}(\frac{1}{p}-\frac{1}{q})} \norm{a - a^{j}}_{L^{p}_{\sigma}(\Omega)} \leq C \norm{a - a^{j}}_{L^{p}_{\sigma}(\Omega)},
\end{aligned}
\notag
\end{equation}
and use that the Stokes semigroup is exponentially stable on $L^q_{\sigma} (\Omega)$ to deduce
\begin{equation}
\begin{aligned}
e^{\frac{\omega t}{2}} t^{\frac{3}{2}(\frac{1}{p}-\frac{1}{q})} \norm{e^{-tA} a^{j}}_{L^{q}_{\sigma}(\Omega)}&\leq C e^{-\frac{\omega t}{2}} t^{\frac{3}{2}(\frac{1}{p}-\frac{1}{q})} \norm{a^{j}}_{L^q_{\sigma} (\Omega)}.
\end{aligned}
\notag
\end{equation}
%
Similarly, choosing an approximating sequence of smooth functions $(b_{s}^{j})_{j \in \mathbb{N}}$ to $b_{s}$ (cf.\@ Lemma~\ref{approx}), we can write 
\[ e^{\frac{\omega t}{2}} t^{\frac{3}{2}(\frac{1}{p}-\frac{1}{q})} \norm{\nabla e^{-tB} b_{s}}_{L^{q}(\Omega)^{3 \times 3}} \leq e^{\frac{\omega t}{2}} t^{\frac{3}{2}(\frac{1}{p}-\frac{1}{q})} \norm{\nabla e^{-tB} (b_{s}-b_{s}^{j})}_{L^{q}(\Omega)^{3 \times 3}}+ e^{\frac{\omega t}{2}} t^{\frac{3}{2}(\frac{1}{p}-\frac{1}{q})} \norm{\nabla e^{-tB} b_{s}^{j}}_{L^{q}(\Omega)^{3 \times 3}}. \]
Now, by Proposition~\ref{prop:smoothing} (b) 
\[\begin{aligned}
e^{\frac{\omega t}{2}} t^{\frac{3}{2}(\frac{1}{p}-\frac{1}{q})} \norm{\nabla e^{-tB} b_{s}^{j}}_{L^{q}(\Omega)^{3 \times 3}} 
\leq C t^{\frac{3}{2}(\frac{1}{p}-\frac{1}{q})} \norm{\nabla b_{s}^{j}}_{L^{q}(\Omega)^{3 \times 3}},
\end{aligned} \]
and also
\[\begin{aligned}
e^{\frac{\omega t}{2}} t^{\frac{3}{2}(\frac{1}{p}-\frac{1}{q})} \norm{\nabla e^{-tB} (b_{s}-b_{s}^{j})}_{L^{q}(\Omega)^{3 \times 3}} 
\leq C  \norm{\nabla (b_{s}-b_{s}^{j})}_{L^{p}(\Omega)^{3 \times 3}}.
\end{aligned}\]
Therefore, choosing simultaneously $j \in \mathbb{N}$ sufficiently large and $T$ sufficiently small, we can conclude the result.
\end{proof}

We give a short summary of the conditions that provide the validity of~\eqref{conv-22}.

\begin{remark}\label{rem:smallness}
\begin{itemize}
\item[(a)] By virtue of Remark~\ref{rem:CT}, we conclude in the case $T = + \infty$ that if $\norm{a}_{L^{p}_{\sigma}(\Omega)} +\norm{\nabla b}_{L^{p}(\Omega)^{3 \times 3}}$ is small enough, the validity of~\eqref{conv-22} can be inferred by Lemma~\ref{lemma:initialvalues}. This implies eventually the global existence under a suitable smallness condition on the initial data $a$ and $\nabla b$.
\item[(b)] If $p > 3$, then~\eqref{conv-22} follows by using the previous Lemma~\ref{lemma:initialvalues} and the fact that $\lim_{T \rightarrow 0} \tilde{C}_{T} = 0$, cf.\@ Remark~\ref{rem:CT}. This will imply local existence of solutions without any smallness assumptions on the initial data.
\item[(c)] If $p =3 $, then~\eqref{conv-22} follows from Lemma~\ref{small} for small times $T$. Again this will imply local existence of solutions without any smallness assumption on the initial data.
\end{itemize}
\end{remark}

\subsubsection{Continuity with respect to time}
We shall next prove the continuity with respect to time of $u_{j}$, $y_{j}$, $\nabla y_{j}$, and $x_{j}$ considered earlier. In this direction, let $3 \leq \max \{p,\frac{q}{2}\} \leq r \leq q < 3+\eps $ and we shall begin our consideration with the sequence $(u_{j})_{j \in \mathbb{N}} $. \par
Let $t_{0} \in (0,T)  $. Then, for $h > 0$ small enough, we have
\begin{equation}
\begin{aligned}
&\norm{u_{0}(t_{0}+h)-u_{0}(t_{0})}_{L^{r}_{\sigma}(\Omega)} = \norm{e^{-(t_{0}+h)A}a-e^{-t_{0}A}a}_{L^{r}_{\sigma}(\Omega)} \leq C t_{0}^{-\frac{3}{2}(\frac{1}{p}-\frac{1}{r})} \norm{e^{-hA}a-a}_{L^{p}_{\sigma}(\Omega)}
\end{aligned}
\label{cont25}
\end{equation}
which tends to $0$ as $h \rightarrow 0$ by strong continuity of $(e^{-tA})_{t \geq 0} $ on $L^{p}_{\sigma}(\Omega) $. Similarly, 
\begin{equation}
\begin{aligned}
\norm{u_{0}(t_{0}-h)-u_{0}(t_{0})}_{L^{r}_{\sigma}(\Omega)}&= \norm{e^{-(t_{0}-h)A}a-e^{-t_{0}A}a}_{L^{r}_{\sigma}(\Omega)}= \norm{e^{-(t_{0}-h)A} [Id-e^{-hA}] a}_{L^{r}_{\sigma}(\Omega)} \\
&\leq C (t_{0}-h)^{-\frac{3}{2}(\frac{1}{p}-\frac{1}{r})} \norm{e^{-hA}a-a}_{L^{p}_{\sigma}(\Omega)},
\end{aligned}
\label{cont26}
\end{equation}
which tends to $0$ as $h \rightarrow 0$. \par
Next, let $h$ be small enough so that $0<h<T-t_{0} $. We want to show that
\begin{equation}
\begin{aligned}
&\Big\| \int_{0}^{t_{0}+h} e^{-(t_{0}+h-s)A} F_{u}(u_{j},\nabla y_{j}) \ \d s - \int_{0}^{t_{0}} e^{-(t_{0}-s)A} F_{u}(u_{j},\nabla y_{j}) \ \d s \Big\|_{L^{r}_{\sigma}(\Omega)}
\end{aligned}
\label{cont30}
\end{equation}
converges to $0$ as $h \rightarrow 0$. To see this, we note that~\eqref{cont30} can be majorised by
\begin{equation}
\begin{aligned}
&\int_{t_{0}}^{t_{0}+h} \norm{e^{-(t_{0}+h-s)A} F_{u}(u_{j},\nabla y_{j})}_{L^{r}_{\sigma}(\Omega)} \ \d s + \int_{0}^{t_{0}} \norm{[e^{-hA}-Id] e^{-(t_{0}-s)A} F_{u}(u_{j},\nabla y_{j})}_{L^{r}_{\sigma}(\Omega)} \ \d s.
\end{aligned}
\label{cont31}
\end{equation}
The first term in~\eqref{cont31} can be dominated as in~\eqref{conv-100} by
\begin{equation}
\begin{aligned}
&C \int_{t_{0}}^{t_{0}+h} e^{-(t_{0}+h)\omega} (t_{0}+h-s)^{-\frac{1}{2}-\frac{3}{2}(\frac{2}{q}-\frac{1}{r})} s^{-3(\frac{1}{p}-\frac{1}{q})} \ \d s \Big[ k^{u}_{j}(T)^2+k^{\nabla y}_{j}(T)^2 \Big] \\
&\leq C e^{-(t_{0}+h)\omega} (t_{0}+h)^{\frac{1}{2}-3(\frac{1}{p}-\frac{1}{2r})} \int_{\frac{t_{0}}{t_{0}+h}}^{1} (1-\tilde{s})^{-\frac{1}{2}-\frac{3}{2}(\frac{2}{q}-\frac{1}{r})} \tilde{s}^{-3(\frac{1}{p}-\frac{1}{q})} \ \d \tilde{s} \Big[k^{u}_{j}(T)^2+k^{\nabla y}_{j}(T)^2 \Big],
\end{aligned}
\end{equation}
which converges to $0$ since $\frac{t_{0}}{t_{0}+h} \rightarrow 1 $ as $h \rightarrow 0$. \par
To deal with the second term of~\eqref{cont31}, we note that
\begin{equation}
\norm{e^{-(t_{0}-s)A} F_{u}(u_{j},\nabla y_{j})}_{L^{r}_{\sigma}(\Omega)} \leq C e^{-t_{0}\omega} (t_{0}-s)^{-\frac{1}{2}-\frac{3}{2}(\frac{2}{q}-\frac{1}{r})} s^{-3(\frac{1}{p}-\frac{1}{q})} \Big[ k^{u}_{j}(T)^2+k^{\nabla y}_{j}(T)^2 \Big],
\notag
\end{equation}
which is integrable as seen in~\eqref{conv-100}. Thus, by the dominated convergence theorem together with the strong continuity of the semigroup this term converges to zero as well. The left-continuity can be proved in a similar manner. Consequently, we have for $3 \leq \max \{p,\frac{q}{2}\} \leq r \leq q < 3+\eps $, 
\begin{align*}
 e^{\frac{\omega s}{2}} s^{\frac{3}{2}(\frac{1}{p}-\frac{1}{r})} u_{j} \in BC((0,T);L^{r}_{\sigma}(\Omega)),
\end{align*}
the boundedness being already proved in~\eqref{conv-100}. Next, if $r > p $, then~\eqref{conv-100},~\eqref{Eq: Boundedness of sequences},~\eqref{Eq: Definition of bounds}, and Lemma~\ref{small} imply
\begin{align*}
 \lim_{s \rightarrow 0} e^{\frac{\omega s}{2}} s^{\frac{3}{2}(\frac{1}{p}-\frac{1}{r})} \norm{u_{j}(s)}_{L^r_{\sigma}(\Omega)}=0,
\end{align*}
which yields $ e^{\frac{\omega s}{2}} s^{\frac{3}{2}(\frac{1}{p}-\frac{1}{r})} u_{j} \in BC([0,T);L^{r}_{\sigma}(\Omega))$. Finally, if $r = p$, choose $q$ close enough to $p$ such that $q \leq 2p$, then one derives similarly to~\eqref{conv-100} the estimate
\begin{align*}
 \sup_{0<t<T} e^{\frac{\omega t}{2}} \norm{u_{j} - u_0}_{L_{\sigma}^p(\Omega)} \leq C k_{j - 1}^q (T)^2
\end{align*}
for some constant $C > 0$. This, combined with~\eqref{Eq: Boundedness of sequences},~\eqref{Eq: Definition of bounds}, Lemma~\ref{small}, and the strong continuity of the Stokes semigroup proves
\begin{align*}
 \lim_{s \to 0} \| u_j (s) - a \|_{L^p_{\sigma} (\Omega)} = 0
\end{align*}
and thus, $u_j \in BC ([0 , T) ; L^p_{\sigma} (\Omega))$. \par
We shall next consider the continuity for $\nabla y_{j} $. Let $t_{0} \in (0,T)$ and $h > 0$ be small enough. Then,
\begin{equation}
\begin{aligned}
&\norm{\nabla y_{0}(t_{0}+h)-\nabla y_{0}(t_{0})}_{L^{r}(\Omega)^{3 \times 3}} \leq C t_{0}^{-\frac{1}{2}-\frac{3}{2}(\frac{1}{p}-\frac{1}{r})} \norm{e^{-hB}b_{s}-b_{s}}_{L^{p}(\Omega)^3},
\end{aligned}
\label{cont34}
\end{equation}
which converges to $0$ as $h \rightarrow 0$, by the strong continuity of the semigroup. The left-continuity follows similarly. \par
Now, let $h$ be small enough so that $0<h<T-t_{0} $. Then 
\begin{equation}
\begin{aligned}
\Big\| \int_{0}^{t_{0}+h} \nabla e^{-(t_{0}+h-s)B} &F_{y}(u_{j},\nabla y_{j},y_{j},x_{j},\overline{b}) \ \d s - \int_{0}^{t_{0}} \nabla e^{-(t_{0}-s)B} F_{y}(u_{j},\nabla y_{j},y_{j},x_{j},\overline{b}) \ \d s \Big\|_{L^{r}(\Omega)^{3 \times 3}}\\
&\leq \int_{t_{0}}^{t_{0}+h} \norm{\nabla e^{-(t_{0}+h-s)B} F_{y}(u_{j},\nabla y_{j},y_{j},x_{j},\overline{b})}_{L^{r}(\Omega)^{3 \times 3}}\ \d s \\
 &\qquad\qquad + \int_{0}^{t_{0}} \norm{\nabla [e^{-hB}-Id]e^{-(t_{0}-s)B} F_{y}(u_{j},\nabla y_{j},y_{j},x_{j},\overline{b})}_{L^{r}(\Omega)^{3 \times 3}} \ \d s.
\end{aligned}
\label{cont35}
\end{equation}
We note that by Proposition~\ref{prop:smoothing}
\begin{equation}
\begin{aligned}
 \int_{t_{0}}^{t_{0}+h} &\norm{\nabla e^{-(t_{0}+h-s)B} F_{y}(u_{j},\nabla y_{j},y_{j},x_{j},\overline{b})}_{L^{r}(\Omega)^{3 \times 3}}\ \d s \\
&\leq C e^{-(t_{0}+h)\omega} (t_{0}+h)^{\frac{1}{2}-3(\frac{1}{p}-\frac{1}{2r})} \int_{\frac{t_{0}}{t_{0}+h}}^{1} (1-\tilde{s})^{-\frac{1}{2}-\frac{3}{2}(\frac{2}{q}-\frac{1}{r})} \tilde{s}^{-3(\frac{1}{p}-\frac{1}{q})} \ \d \tilde{s} \\
 &\qquad\qquad\qquad\qquad\qquad\qquad\qquad\qquad\qquad  \Big[k^{u}_{j}(T) k^{\nabla y}_{j}(T) + k^{\nabla y}_{j}(T)^2 (k^{x}_{j}(T)+k^{y}_{j}(T)+ \lvert \overline{b} \rvert) \Big]
\end{aligned}
\notag
\end{equation}
and this converges to $0$ as $h \rightarrow 0$ since $\frac{t_{0}}{t_{0}+h} \rightarrow 1 $. To deal with the second term, we estimate by virtue of Proposition~\ref{Prop: Square root properties}~(b)
\begin{equation}
\begin{aligned}
\|\nabla [e^{-hB}-Id]e^{-(t_{0}-s)B} &F_{y}(u_{j},\nabla y_{j},y_{j},x_{j},\overline{b})\|_{L^{r}(\Omega)^{3 \times 3}}  \\
 &\leq C \norm{[e^{-hB}-Id]B^{\frac{1}{2}} e^{-(t_{0}-s)B} F_{y}(u_{j},\nabla y_{j},y_{j},x_{j},\overline{b})}_{L^{r}(\Omega)^3}.
\end{aligned}
\notag
\end{equation}
Now, by Propositions~\ref{Prop: Square root properties} and~\ref{prop:smoothing} and standard estimates for fractional powers applied to analytic semigroups
\begin{equation}
\norm{B^{\frac{1}{2}} e^{-(t_{0}-s)B} F_{y}(u_{j},\nabla y_{j},y_{j},x_{j},\overline{b})}_{L^{r}(\Omega)^3} \leq C e^{-(t_{0}-s)\omega} (t_{0}-s)^{-\frac{1}{2}-\frac{3}{2}(\frac{2}{q}-\frac{1}{r})} \norm{F_{y}(u_{j},\nabla y_{j},y_{j},x_{j},\overline{b})}_{L^{\frac{q}{2}}(\Omega)^3}
\notag
\end{equation}
and
\begin{equation}
\begin{aligned}
 \int_{0}^{t_{0}} &e^{-(t_{0}-s)\omega} (t_{0}-s)^{-\frac{1}{2}-\frac{3}{2}(\frac{2}{q}-\frac{1}{r})} \norm{F_{y}(u_{j},\nabla y_{j},y_{j},x_{j},\overline{b})}_{L^{\frac{q}{2}}(\Omega)^3} \ \d s \\
&\leq C \int_{0}^{t_{0}} (t_{0}-s)^{-\frac{1}{2}-\frac{3}{2}(\frac{2}{q}-\frac{1}{r})} s^{-3(\frac{1}{p}-\frac{1}{q})} \ \d s \Big[k^{u}_{j}(T) k^{\nabla y}_{j}(T) + k^{\nabla y}_{j}(T)^2 (k^{x}_{j}(T)+k^{y}_{j}(T)+ \lvert \overline{b} \rvert) \Big]
\end{aligned}
\notag
\end{equation}
which is finite. Hence, we can use the dominated convergence theorem together with the strong continuity of the semigroup to infer that the second term in~\eqref{cont35} converges to zero as well, thus proving right-continuity.  The left-continuity can be studied in a similar fashion. Therefore, for $3 \leq \max \{p,\frac{q}{2}\} \leq r \leq q < 3+\eps$, we find by~\eqref{conv-101}
\begin{align*}
&e^{\frac{\omega s}{2}} s^{\frac{3}{2}(\frac{1}{p}-\frac{1}{r})} \nabla y_{j} \in BC((0,T);L^{r}(\Omega)^{3 \times 3}).
\end{align*}
If $r > p$, then~\eqref{conv-101},~\eqref{Eq: Boundedness of sequences},~\eqref{Eq: Definition of bounds}, Lemma~\ref{lemma:initialvalues}, and Lemma~\ref{small} imply
\begin{align*}
 \lim_{s \rightarrow 0} e^{\frac{\omega s}{2}} s^{\frac{3}{2}(\frac{1}{p}-\frac{1}{r})} \norm{\nabla y_{j}(s)}_{L^r(\Omega)^{3 \times 3}}=0,
\end{align*}
yielding $e^{\frac{\omega s}{2}} s^{\frac{3}{2}(\frac{1}{p}-\frac{1}{r})} \nabla y_{j} \in BC([0,T);L^{r}(\Omega)^{3 \times 3})$. In the case $p = r$ choose $q$ close enough to $p$ such that $q \leq 2 p$. Then, a similar calculation to~\eqref{conv-101} yields
\begin{align*}
 &\sup_{0<t<T} e^{\frac{\omega t}{2}} t^{\frac{3}{2}(\frac{1}{p}-\frac{1}{r})} \norm{\nabla [y_{j} - y_0]}_{L^r(\Omega)^{3 \times 3}} \leq C \Big[k^q_{j - 1}(T)^2 + k^{q}_{j - 1}(T)^2 (k^{\infty}_{j - 1}(T) + \lvert \overline{b} \rvert) \Big],
\end{align*}
for some constant $C > 0$. This, combined with~\eqref{Eq: Boundedness of sequences},~\eqref{Eq: Definition of bounds}, Lemma~\ref{lemma:initialvalues}, Lemma~\ref{small}, and Proposition~\eqref{Prop: Square root properties}
\begin{align*}
 \lim_{s \to 0} \| \nabla [y_j (s) - b] \|_{L^p(\Omega)^{3 \times 3}} \leq \lim_{s \to 0} \| \nabla [y_j (s) - y_0 (s)] \|_{L^p(\Omega)^{3 \times 3}} + \lim_{s \to 0} \| B^{\frac{1}{2}} [e^{- s B} b - b] \|_{L^p (\Omega)^{3 \times 3}} = 0.
\end{align*}
Altogether, it follows $\nabla y_j \in BC ([0 , T) ; L^p(\Omega)^{3 \times 3})$. \par
Finally, the continuity of $y_j$ with respect to the $L^{\infty}$-norm and the continuity of $x_j$ follows by similar calculations, which are omitted here.

\subsubsection{} 
Summing up the discussions above, and using the fact that by~\eqref{Eq: Contraction of delta} the sequence $(\delta_{j})_{j \in \mathbb{N}} $ gives rise to a contraction, we conclude the convergence of the sequences $(u_{j})_{j \in \mathbb{N}}$, $(y_{j})_{j \in \mathbb{N}}$, and $(x_{j})_{j \in \mathbb{N}}$ to functions $u$, $y$, and $x  $ such that 
\begin{equation}
\begin{aligned}
&u \in S^{u}_{q}(T),\quad d:=y+x \in  BC([0,T);L^{\infty}(\Omega)^3), \quad \text{and} \quad \nabla d \in S^{d}_{q}(T),\\
&\lim_{s \rightarrow 0} e^{\frac{\omega s}{2}} s^{\frac{3}{2}(\frac{1}{p}-\frac{1}{q})} \norm{u(s)}_{L^q_{\sigma}(\Omega)}=0, 
\qquad \lim_{s \rightarrow 0} e^{\frac{\omega s}{2}} s^{\frac{3}{2}(\frac{1}{p}-\frac{1}{q})} \norm{\nabla d(s)}_{L^q(\Omega)^{3 \times 3}}=0.
\end{aligned}
\label{conv-200}
\end{equation}
Here, $q$ is any number satisfying $p < q < 3 + \eps$. If $q$ is close enough to $p$, i.e., if $q \leq 2 p$, then one can derive similar inequalities to~\eqref{conv-100} and~\eqref{conv-101} for the differences of $u_j$ and $u_l$ and of $\nabla y_j$ and $\nabla y_l$ for $j , l \in \N$ and for $r = p$. This proves that $(u_j)_{j \in \N}$ is a Cauchy sequence in $BC ([0 , T) ; L^p_{\sigma} (\Omega))$ and that $(\nabla y_j)_{j \in \N}$ is a Cauchy sequence in $BC ([0 , T) ; L^p(\Omega)^{3 \times 3})$ whenever 
\begin{align*}
 \big(e^{\frac{\omega t}{2}} t^{\frac{3}{2} (\frac{1}{p} - \frac{1}{q})} u_j \big)_{j \in \N} \subset BC ([0 , T) ; L^q_{\sigma} (\Omega)),& \quad \big(e^{\frac{\omega t}{2}} t^{\frac{3}{2} (\frac{1}{p} - \frac{1}{q})} \nabla y_j \big)_{j \in \N} \subset BC ([0 , T) ; L^q (\Omega)^{3 \times 3}),\\
 \big( y_j \big)_{j \in \N} \subset BC ([0 , T) ; L^{\infty} (\Omega)^3),& \quad \text{and} \quad \big( x_j \big)_{j \in \N} \subset BC ([0 , T) ; \R^3)
\end{align*}
are Cauchy sequences. Consequently, $u \in BC([0,T);L^{p}_{\sigma}(\Omega))$ and $d \in BC([0,T);W^{1,p}(\Omega)^3)$ and thus $u$ and $d$ give rise to a mild solution satisfying properties (a), (b), and $(c)$ of Theorem~\ref{thm:main}.

\subsubsection{Uniqueness}
Let $u_{1}, u_{2} \in BC([0,T);L^{p}_{\sigma}(\Omega)) $ and $d_{1}, d_{2} \in BC([0,T);W^{1,p}(\Omega)^3)$ be such that $(u_{1},d_{1}) $ and $(u_{2},d_{2}) $ are two mild solutions satisfying~\eqref{conv-200} for some $q\in (p, 3 + \eps)$. 
\par
We consider the differences $u_{1}-u_{2} $ and $d_{1}-d_{2} $ and proceed as in the estimation of the sequence $(\delta_{j})_{j \in \N}$ to infer that an analogous version of~\eqref{Eq: Contraction of delta} is valid for the differences. Now, on the left-hand side as well as on the right-hand side of this analogous version of~\eqref{Eq: Contraction of delta} the differences of $u_1$ and $u_2$ and of $d_1$ and $d_2$ appear. This already implies the uniqueness. The only major point to note in this regard is that if $p >3 $, the conditions 
\begin{equation}
\begin{aligned}
&\lim_{s \rightarrow 0} e^{\frac{\omega s}{2}} s^{\frac{3}{2}(\frac{1}{p}-\frac{1}{q})} \norm{u_{i}(s)}_{L^q_{\sigma}(\Omega)}=0,\\
&\lim_{s \rightarrow 0} e^{\frac{\omega s}{2}} s^{\frac{3}{2}(\frac{1}{p}-\frac{1}{q})} \norm{\nabla d_{i}(s)}_{L^q(\Omega)^{3 \times 3}}=0, \ i=1,2,
\end{aligned}
\label{conv-201}
\end{equation}
are not required since we can use the fact that the constant $\tilde{C}_{T} \rightarrow 0 $ as $T \rightarrow 0 $.
But for $p=3$, the conditions~\eqref{conv-201} need to be assumed in addition.

\subsection{Retrieving the $\lvert d \rvert = 1$ condition}
\label{Subsec: Retrieving}
Let $u$ and $d$ be mild solutions to~\eqref{eq:PLCD} with Neumann boundary conditions for $d$. Moreover, assume that the initial conditions satisfy $a \in L^p_{\sigma} (\Omega)$ and $b \in W^{1 , p} (\Omega)^3$ with $\lvert b \rvert = 1$ in $\Omega$ for some $3 \leq p < 3 + \eps$. Without loss of generality let $T < \infty$ and assume further, that for every $p \leq q < 3 + \eps$,
\begin{align*}
 t \mapsto e^{\frac{\omega t}{2}} t^{\frac{3}{2} (\frac{1}{p} - \frac{1}{q})} u (t) &\in B C ([0 , T) ; L^q_{\sigma} (\Omega)), \\
 t \mapsto e^{\frac{\omega t}{2}} t^{\frac{3}{2} (\frac{1}{p} - \frac{1}{q})} \nabla d (t) &\in B C ([0 , T) ; L^q (\Omega)^{3 \times 3}), \\
 d &\in B C ([0 , T) ; L^{\infty} (\Omega)^3).
\end{align*}
These are precisely the properties of the solutions constructed in Subsection~\ref{Subsec: Existence and uniqueness}. Especially, the second equation of~\eqref{eq:LCD} shows that $d$ is a solution to the linear heat equation with right-hand side
\begin{align*}
 - (u \cdot \nabla) d + \lvert \nabla d \rvert^2 d \in L^s(0 , T ; L^{\frac{p}{2}} (\Omega)^3)
\end{align*}
and with initial value $b \in W^{1 , p} (\Omega)^3 \subset (L^{\frac{p}{2}} (\Omega)^3, \dom(B_{\frac{p}{2}}))_{1 - 1 / s, s}$ for every $1 < s < 2$. The inclusion follows from the following observations. First of all, $W^{1 , p} (\Omega)^3 \subset W^{1 , \frac{p}{2}} (\Omega)^3$ is a consequence of H\"older's inequality. Next, the equality $\dom(B_{p / 2}^{\frac{1}{2}}) = W^{1 , \frac{p}{2}} (\Omega)^3$ follows from Proposition~\ref{Prop: Square root properties} and finally $\dom(B^{\frac{1}{2}}) \subset (X , \dom(B))_{1 / 2 , \infty}$ is valid for every sectorial operator $B$ of angle less than $\pi / 2$, see~\cite[Cor.~6.6.3]{Haase}. The embedding $(X , Y)_{1 / 2 , \infty} \subset (X , Y)_{1 - 1 / s , s}$ is a standard embedding in the theory of real interpolation whenever $Y \subset X$. In our case $X=L^{\frac{p}{2}} (\Omega)^3$ and $Y = \dom(B_{\frac{p}{2}})$. \par
Now, $B$ has maximal regularity, cf.~Proposition~\ref{Prop: Maximal regularity of strong operators}. This means that whenever $\mathfrak{d}$ is a mild solution to the equation
\begin{align*}
\left\{ \begin{aligned}
 \mathfrak{d}^{\prime} +B \mathfrak{d} &= f \qquad (0 < t < T) \\
 \mathfrak{d} (0) &= a_{\mathfrak{d}}
\end{aligned} \right.
\end{align*}
with $f \in L^s (0 , T ; L^{
\frac{p}{2}} (\Omega)^3)$ and $a_{\mathfrak{d}} \in (L^{\frac{p}{2}} (\Omega)^3 , \dom(B_{\frac{p}{2}}))_{1 - 1 / s , s}$, then $\mathfrak{d}$ is already a strong solution, i.e.,
\begin{align*}
 \mathfrak{d} \in W^{1 , s} (0 , T ; L^{\frac{p}{2}} (\Omega)^3) \cap L^s (0 , T ; \dom(B_{\frac{p}{2}}))
\end{align*}
and $\mathfrak{d}$ solves the equation above pointwise almost everywhere. \par
As we have shown, for $1 < s < 2$ the function $d$ solves exactly such an equation so that by maximal regularity $d$ lies in the maximal regularity space stated in the previous equation. Especially, $d^{\prime} (t)$ exists for almost every $t \in (0 , T)$ and $d(t) \in \dom(B_{\frac{p}{2}})$ for almost every $t \in (0 , T)$. Moreover, the second equation of~\eqref{eq:LCD} holds for almost every $t \in (0 , T)$ in the strong sense. More precisely, this means
\begin{align*}
 d^{\prime} (t) +B d(t) = - (u(t) \cdot  \nabla) d(t) + \lvert \nabla d(t) \rvert^2 d(t) \qquad \text{a.e. } t \in (0 , T).
\end{align*}
The sense in which this equality has to be read is the following. As the Laplacian is defined via a sesquilinear form and since $\dom(B_{\frac{p}{2}}) \subset W^{1 , p / 2} (\Omega)^3$ by Proposition~\ref{Prop: Square root properties} it holds for every $\vartheta \in C^{\infty} (\overline{\Omega})^3$
\begin{align}
\label{Eq: Tested d-equation}
 \int_{\Omega} d^{\prime} (t) \cdot \overline{\vartheta} \; \d x + \int_{\Omega} \nabla d(t) \cdot \overline{\nabla \vartheta} \; \d x = - \int_{\Omega} (u (t) \cdot \nabla) d (t) \cdot \overline{\vartheta} \; \d x + \int_{\Omega} \lvert \nabla d (t) \rvert^2 d (t) \cdot \overline{\vartheta} \; \d x.
\end{align}
Clearly, by density this identity holds for all $\vartheta \in W^{1 , (\frac{p}{2})^{\prime}} (\Omega)^3$. Next, define
\begin{align*}
 \varphi := \lvert d \rvert^2 - 1.
\end{align*}
The aim is to show that under the present conditions, $\varphi$ is identically zero. To do so, we derive an equation for $\varphi$ by employing~\eqref{Eq: Tested d-equation}. However, note first that by assumption $\varphi(t) \in L^{\infty} (\Omega)$ for every $t \in (0 , T)$ and that
\begin{align}
\label{Eq: Derivatives of phi}
 \partial_k \varphi (t) = 2 \overline{d (t)} \cdot \partial_k d (t) \in L^p (\Omega) \quad \text{and} \quad \varphi^{\prime} (t) = 2 \overline{d (t)} \cdot d^{\prime} (t) \in L^{\frac{p}{2}} (\Omega).
\end{align}
Thus, let $w$ be a test function in $C^{\infty} (\overline{\Omega})$. Then, by~\eqref{Eq: Derivatives of phi} and the product rule
\begin{align*}
 \int_{\Omega} \varphi^{\prime} \overline{w} \; \d x + \int_{\Omega} \nabla \varphi \cdot \overline{\nabla w} \; \d x = 2 \int_{\Omega} d^{\prime} \cdot \overline{(d w)} \; \d x + 2 \sum_{k , l = 1}^3 \int_{\Omega} \partial_k d_l \overline{\partial_k (d_l w)} \; \d x - 2 \sum_{k , l = 1}^3 \int_{\Omega} \partial_k d_l \overline{(\partial_k d_l) w} \; \d x.
\end{align*}
Now, we use~\eqref{Eq: Tested d-equation} with $d w$ as the test function and the fact that $\varphi = \lvert d \rvert^2 - 1$. Note that this is possible as $d(t) \in L^{\infty} (\Omega)^3$ and since $p \geq 3$, $\nabla d(t) \in L^p(\Omega)^{3 \times 3} \subset L^{\frac{p}{p - 2}} (\Omega)^{3 \times 3} = L^{(\frac{p}{2})^{\prime}} (\Omega)^{3 \times 3}$. Consequently, it holds
\begin{align*}
 \int_{\Omega} \varphi^{\prime} \overline{w} \; \d x + \int_{\Omega} \nabla \varphi \cdot \overline{\nabla w} \; \d x = - 2 \int_{\Omega} (u \cdot \nabla) d \cdot \overline{(d w)} \; \d x + 2 \int_{\Omega} \lvert \nabla d \rvert^2 \varphi \overline{w} \; \d x.
\end{align*}
Finally, note that
\begin{align*}
 (u \cdot \nabla) d \cdot \overline{d} = \sum_{k , l = 1}^3 u_k \partial_k d_l \overline{d_l} = \frac{1}{2} u \cdot \nabla \lvert d \rvert^2 = \frac{1}{2}u \cdot \nabla \varphi,
\end{align*}
so that
\begin{align}
\label{Eq: Equation for phi}
 \int_{\Omega} \varphi^{\prime} \overline{w} \; \d x + \int_{\Omega} \nabla \varphi \cdot \overline{\nabla w} \; \d x = - \int_{\Omega} u \cdot \nabla \varphi \overline{w} \; \d x + 2 \int_{\Omega} \lvert \nabla d \rvert^2 \varphi \overline{w} \; \d x
\end{align}
holds true for all $w \in C^{\infty} (\overline{\Omega})$. Since $u(t) \in L^p(\Omega)^3, \varphi(t) \in L^{\infty} (\Omega)$, and $\nabla d (t) \in L^p(\Omega)^{3 \times 3}$,~\eqref{Eq: Derivatives of phi} implies
\begin{align*}
 \varphi^{\prime} (t) , \quad \nabla \varphi(t) , \quad u (t) \cdot \nabla \varphi (t) , \quad \lvert \nabla d \rvert^2 \varphi \quad \in L^{\frac{p}{2}} (\Omega).
\end{align*}
Summarising, by density~\eqref{Eq: Equation for phi} remains valid for $w \in W^{1 , (\frac{p}{2})^{\prime}} (\Omega)$. Moreover, for $p \geq 3$,~\eqref{Eq: Derivatives of phi} implies also that for almost every $t \in (0 , T)$ we have $\varphi(t) \in W^{1 , p} (\Omega) \subset W^{1 , (\frac{p}{2})^{\prime}} (\Omega)$. Thus, $\varphi$ itself is an admissible test function so that~\eqref{Eq: Equation for phi} turns into
\begin{align}
\label{Eq: phi-equation tested with phi}
 \int_{\Omega} \varphi^{\prime} \varphi \; \d x + \int_{\Omega} \lvert \nabla \varphi \rvert^2 \; \d x = - \int_{\Omega} u \cdot \nabla \varphi \varphi \; \d x + 2 \int_{\Omega} \lvert \nabla d \rvert^2 \varphi^2 \; \d x.
\end{align}
Moreover, note that
\begin{align*}
 \int_0^t \int_{\Omega} \varphi^{\prime} \varphi \; \d x \; \d s = \frac{1}{2} \int_{\Omega} \varphi (t)^2 \; \d x
\end{align*}
since $\varphi(0) = 0$ and that
\begin{align*}
 \int_{\Omega} u \cdot \varphi \nabla \varphi  \; \d x = \frac{1}{2} \int_{\Omega} u \cdot \nabla \varphi^2 \; \d x = 0
\end{align*}
since $u$ is divergence free and vanishes on the boundary. After integrating~\eqref{Eq: phi-equation tested with phi} with respect to time one finds
\begin{align}
\label{Eq: Energy of phi}
 \frac{1}{2} \int_{\Omega} \varphi (t)^2 \; \d x + \int_0^t \int_{\Omega} \lvert \nabla \varphi(s) \rvert^2 \; \d x \; \d s = 2 \int_0^t \int_{\Omega} \lvert \nabla d(s) \rvert^2 \varphi(s)^2 \; \d x \; \d s.
\end{align}
To show that~\eqref{Eq: Energy of phi} implies that $\varphi$ is zero we show that Gronwall's lemma is applicable to the function $\varphi^2$. To do so, let $t_0 \in (0 , t)$ be a number to be determined and split the integral on the right-hand side of~\eqref{Eq: Energy of phi} as
\begin{align*}
 \int_0^t \int_{\Omega} \lvert \nabla d(s) \rvert^2 \varphi (s)^2 \; \d x \; \d s = \int_0^{t_0} \int_{\Omega} \lvert \nabla d(s) \rvert^2 \varphi (s)^2 \; \d x \; \d s + \int_{t_0}^t \int_{\Omega} \lvert \nabla d(s) \rvert^2 \varphi (s)^2 \; \d x \; \d s =: \mathrm{I} + \mathrm{II}.
\end{align*}

We estimate the term $\mathrm{II}$ first:
For $p < q < 3 + \eps$ one estimates by means of H\"older's and Sobolev's inequality as well as the decay estimates of $\nabla d$,
\begin{align*}
 \int_{t_0}^t \int_{\Omega} \lvert \nabla d(s) \rvert^2 \varphi(s)^2 \; \d x \; \d s &\leq \int_{t_0}^t \| \nabla d(s) \|^2_{L^q} \| \varphi(s) \|_{L^{2 \cdot (\frac{q}{2})^{\prime}}}^2 \; \d s \leq C t_0^{- 3 (\frac{1}{p} - \frac{1}{q})} \int_{t_{0}}^t \| \varphi (s) \|_{L^2}^{2 - 2 \alpha} \| \varphi (s) \|_{W^{1 , 2}}^{2 \alpha} \; \d s,
\end{align*}
where $\alpha = 3 / q$. Continuing the estimate above with Young's inequality delivers
\begin{align*}
 C t_0^{- 3 (\frac{1}{p} - \frac{1}{q})} \int_{t_0}^t \| \varphi (s) \|_{L^2}^{2 - 2 \alpha} \| \varphi (s) \|_{W^{1 , 2}}^{2 \alpha} \; \d s \leq C(t_0 , q) \int_0^t \| \varphi(s) \|_{L^2}^2 \; \d s + \frac{1}{8} \int_0^t \int_{\Omega} \lvert \nabla \varphi (s) \rvert^2 \; \d x \; \d s.
\end{align*}

Next, we estimate the term $\mathrm{I}$: Recall that the initial value for $d$ is denoted by $b$. Let $(b_{n})_{n \in \N} \subset C^{\infty} (\overline{\Omega})^3$ be such that $b_{n} \to b$ in $W^{1 , p} (\Omega)^3$. Using the triangle inequality, H\"older's inequality, and Sobolev's embedding in a row, we estimate for $p < q < 3 + \eps$
\begin{align*}
 \int_0^{t_0} \int_{\Omega} &\lvert \nabla d(s) \rvert^2 \varphi (s)^2 \; \d x \; \d s \\
 &\leq 2 \int_0^{t_0} \int_{\Omega} \lvert \nabla [d(s) - b_{n}] \rvert^2 \varphi (s)^2 \; \d x \; \d s + 2 \int_0^{t_0} \int_{\Omega} \lvert \nabla b_{n} \rvert^2 \varphi (s)^2 \; \d x \; \d s \\
 &\leq 2 \int_0^{t_0} \| \nabla [d(s) - b_{n}] \|_{L^p}^2 \| \varphi (s) \|_{L^{2 \cdot (p / 2)^{\prime}}}^2 \; \d s 
 + \| \nabla b_{n} \|_{L^q}^2 \int_0^{t_0} \| \varphi (s) \|_{L^{2 \cdot (q / 2)^{\prime}}}^2 \; \d s \\
 &\leq C \sup_{0 < s < t_0} \| \nabla [d(s) - b_{n}] \|_{L^p}^2 \int_0^{t_0} \| \varphi(s) \|_{W^{1 , 2}}^2 \; \d s 
 + C \| \nabla b_{n} \|_{L^q}^2 \int_0^{t_0} \| \varphi(s) \|_{L^2}^{2 - 2 \alpha} \| \varphi(s) \|_{W^{1 , 2}}^{2 \alpha} \; \d s,
\end{align*}
where $\alpha$ is chosen as in the estimate of term $\mathrm{II}$. Employing Young's inequality implies
\begin{align*}
 \int_0^{t_0} \int_{\Omega} \lvert \nabla d(s) \rvert^2 \varphi (s)^2 \; \d x &\leq \big\{ C \sup_{0 < s < t_0} \| \nabla [d(s) - b_{ n}] \|_{L^p}^2 + C(q , \| \nabla b_{ n} \|_{L^q}^2) \big\} \int_0^t \| \varphi(s) \|_{L^2}^2 \; \d s \\
 &\qquad+ \Big\{ \frac{1}{16} + C \sup_{0 < s < t_0} \| \nabla [d(s) - b_{ n}] \|_{L^p}^2 \Big\} \int_0^t \| \nabla \varphi (s) \|_{L^2}^2 \; \d s.
\end{align*}
Finally,
\begin{align*}
 \sup_{0 < s < t_0} \| \nabla [d(s) - b_{n}] \|_{L^p}^2 \leq 2 \sup_{0 < s < t_0} \| \nabla [d(s) - b] \|_{L^p}^2 + 2 \| \nabla [b - b_{ n}] \|_{L^p}^2.
\end{align*}
Choose $t_0$ small enough, such that
\begin{align*}
 2 \sup_{0 < s < t_0} \| \nabla [d(s) - b] \|_{L^p}^2 < \frac{1}{32 C}
\end{align*}
and $n$ large enough, such that
\begin{align*}
 2 \| \nabla [b - b_{ n}] \|_{L^p}^2 < \frac{1}{32 C}.
\end{align*}
For these fixed numbers $t_0$ and $n$, we finally find
\begin{align*}
 \int_0^t \int_{\Omega} \lvert \nabla d(s) \rvert^2 &\varphi (s)^2 \; \d x \; \d s \\
 &\leq 2 \bigg\{ \big\{ C(t_0 , q) + C \sup_{0 < s < t_0} \| \nabla [d(s) - b_{ n}] \|_{L^p}^2 + C(q , \| \nabla b_{n} \|_{L^q}^2) \big\} \int_0^t \| \varphi (s) \|_{L^2}^2 \; \d s \\
 &\qquad+ \frac{2}{8} \int_0^t \int_{\Omega} \lvert \nabla \varphi (s) \rvert^2 \; \d x \; \d s \bigg\}.
\end{align*}
By virtue of~\eqref{Eq: Energy of phi}, we can absorb the term involving $\nabla \varphi$ from the right-hand side to the left-hand side, delivering the estimate
\begin{align*}
 \| \varphi (t) \|_{L^2}^2 &+ \int_0^t \int_{\Omega} \lvert \nabla \varphi (s) \rvert^2 \; \d x \; \d s \\
 &\leq 4 \big\{ C(t_0 , q) + C \sup_{0 < s < t_0} \| \nabla [d(s) - b_{ n}] \|_{L^p}^2 + C(q , \| \nabla b_{ n} \|_{L^q}^2) \big\} \int_0^t \| \varphi (s) \|_{L^2}^2 \; \d s.
\end{align*}
Since $t \mapsto \| \varphi (t) \|_{L^2}^2$ is continuous on $[0 , T)$, Gronwall's inequality can be applied and reveals $\varphi \equiv 0$. \qed

\section{A digression on the weak Stokes operator and the proof of Theorem~\ref{Regularity}}
\label{Sec: Regularity}
Now, that we have constructed a mild solution to~\eqref{eq:PLCD} in the sense of~\eqref{eq:mild_solution}, we use the theory of maximal regularity, cf.~Section~\ref{Sec: Preliminaries}, in order to gain some additional regularity properties of the solutions. 
For this purpose, a suitable functional framework is needed. \par
In this section, let $p$ be such that $\lvert 1 / p - 1 / 2 \rvert < 1 / 6 + \eps$, and $p^{\prime}$ always denotes the H\"older conjugate exponent to $p$, i.e. $\tfrac{1}{p}+ \tfrac{1}{p^{\prime}}=1$.
Recall that $W^{- 1 , p}_{\sigma} (\Omega) = [W^{1,p^{\prime}}_{0,\sigma}(\Omega)]^*$ is defined as dual space, and denote the duality pairing by 
\begin{align*}
w(v) =\langle w,v \rangle_{W^{- 1 , p}_{\sigma},W^{1,p^{\prime}}_{0,\sigma}}, \quad  w\in W^{- 1 , p}_{\sigma} (\Omega), \, v \in W^{1,p^{\prime}}_{0,\sigma}(\Omega).
\end{align*}
Moreover, let 
$\Phi:~[L^{p^{\prime}}_{\sigma}(\Omega)]^{*} \rightarrow L^p_{\sigma}(\Omega)$
denote the canonical isomorphism between $[L^{p^{\prime}}_{\sigma}(\Omega)]^{*}$ and $L^p_{\sigma}(\Omega)$ introduced in Section~\ref{Sec: Preliminaries}, and the duality pairing is
\begin{align*}
(\Phi^{-1}u)(v)= \langle \Phi^{-1}u,v \rangle_{[L^{p^{\prime}}_{\sigma}]^{*}, L^{p^{\prime}}_{\sigma}} = \langle u,v \rangle_{L^p_{\sigma}, L^{p^{\prime}}_{\sigma}}= \int_ {\Omega} u \cdot \overline{v} \; \d x, \quad u \in  L^p_{\sigma}(\Omega), \, v\in L^{p^{\prime}}_{\sigma}(\Omega).
\end{align*} 
We regard $\Phi^{-1}$ also  as the canonical inclusion of $L^p_{\sigma} (\Omega)$ into $W^{-1 , p}_{\sigma} (\Omega)$ by 
\begin{align*}
\langle \Phi^{-1}u, v \rangle_{W^{- 1 , p}_{\sigma},W^{1,p^{\prime}}_{0,\sigma}}= \langle u,v \rangle_{L^p_{\sigma}, L^{p^{\prime}}_{\sigma}}, \quad u\in L^p_{\sigma} (\Omega), \, v\in W^{1,p^{\prime}}_{0,\sigma}(\Omega).
\end{align*} 
In this sense, we define the weak Stokes operator $\mathcal{A}_p$ in $W^{-1 , p}_{\sigma} (\Omega)$
by $\dom(\mathcal{A}_p) := \Phi^{-1} W^{1 , p}_{0 , \sigma} (\Omega)$ and 
\begin{equation}
\begin{aligned}
\mathcal{A}_p: \dom(\mathcal{A}_p) \subset W^{-1 , p}_{\sigma} (\Omega) \rightarrow W^{-1,p}_{\sigma}(\Omega), \quad w \mapsto \Big[v \mapsto \int_{\Omega} \nabla \Phi w \cdot \overline{ \nabla v } \; \d x \Big].
\end{aligned}
\label{reg1}
\end{equation}
Recall that, by Proposition~\ref{Prop: Square root properties}, the square root of the Stokes operator satisfies $A^{\frac{1}{2}}_{p^{\prime}} \in \text{Isom}(W^{1,p^{\prime}}_{0,\sigma}(\Omega),L^{p^{\prime}}_{\sigma}(\Omega))$ and hence
\begin{align}
\label{Eq: Square root isomorphism}
 \big[A^{\frac{1}{2}}_{p^{\prime}} \big]^{*}\Phi^{-1}  \in \text{Isom}(L^{p}_{\sigma}(\Omega),W^{-1,p}_{\sigma}(\Omega)).
\end{align}

We now show that the following representations of $\mathcal{A}_{p}$ are valid.
\begin{lemma}
\label{Lem: Representation of weak Stokes}
For $\lvert 1 / p - 1 / 2 \rvert < 1 / 6 + \eps$ the operator $\mathcal{A}_p$ is given by
\begin{align}
\label{Eq: Representation of weak Stokes}
\mathcal{A}_{p}= \big[A^{\frac{1}{2}}_{p'} \big]^* \Phi^{-1} \circ A_{p} \circ A^{-\frac{1}{2}}_{p} \Phi = \big[A^{\frac{1}{2}}_{p'} \big]^* \Phi^{-1} \circ A_{p} \circ \Phi \big[A^{-\frac{1}{2}}_{p'} \big]^*.
\end{align}
\end{lemma}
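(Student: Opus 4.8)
The plan is to establish both identities by a direct computation with the duality pairings, the crucial input being the bilinear-form representation of the square roots. Concretely, the first task is to prove that
\begin{align}
\label{Eq: form identity}
 \int_\Omega \nabla u \cdot \overline{\nabla v} \; \d x = \big\langle A_p^{\frac{1}{2}} u , A_{p'}^{\frac{1}{2}} v \big\rangle_{L^p_\sigma , L^{p'}_\sigma} \qquad \big( u \in W^{1,p}_{0,\sigma}(\Omega) , \ v \in W^{1,p'}_{0,\sigma}(\Omega) \big),
\end{align}
together with its equivalent reformulation $\langle A_p^{1/2} u , v \rangle_{L^p_\sigma , L^{p'}_\sigma} = \langle u , A_{p'}^{1/2} v \rangle_{L^p_\sigma , L^{p'}_\sigma}$ for the same $u , v$, and --- applying the latter to $u = A_p^{-1/2} f$ and $v = A_{p'}^{-1/2} h$ --- the dual statement $\langle A_p^{-1/2} f , h \rangle_{L^p_\sigma , L^{p'}_\sigma} = \langle f , A_{p'}^{-1/2} h \rangle_{L^p_\sigma , L^{p'}_\sigma}$ for $f \in L^p_\sigma(\Omega)$, $h \in L^{p'}_\sigma(\Omega)$. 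To prove \eqref{Eq: form identity} I would start from Kato's representation theorem applied to the symmetric form $\fa$: on $L^2_\sigma(\Omega)$ one has $\dom(A_2^{1/2}) = W^{1,2}_{0,\sigma}(\Omega)$ and $\int_\Omega \nabla u \cdot \overline{\nabla v} \, \d x = (A_2^{1/2} u , A_2^{1/2} v)_{L^2}$. For $u , v \in C^\infty_{c,\sigma}(\Omega)$ the consistency of the family $(A_q)_q$ --- hence of the associated holomorphic functional calculi and, in particular, of the square roots --- gives $A_2^{1/2} u = A_p^{1/2} u$ and $A_2^{1/2} v = A_{p'}^{1/2} v$, while the $L^2$-inner product restricts to the $L^p$--$L^{p'}$ pairing; thus \eqref{Eq: form identity} holds on $C^\infty_{c,\sigma}(\Omega) \times C^\infty_{c,\sigma}(\Omega)$. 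Both sides are continuous on $W^{1,p}_{0,\sigma}(\Omega) \times W^{1,p'}_{0,\sigma}(\Omega)$ --- the left-hand side by H\"older's inequality and the right-hand side because $A_p^{1/2}$, $A_{p'}^{1/2}$ are bounded from the respective Sobolev spaces into $L^p_\sigma(\Omega)$, $L^{p'}_\sigma(\Omega)$ by Proposition~\ref{Prop: Square root properties} --- so \eqref{Eq: form identity} extends by density. (Alternatively one may invoke the standard duality $\big(A_p^{1/2}\big)^* = A_{p'}^{1/2}$ of fractional powers of dual operators, based on $(A_p)^* = A_{p'}$, and combine it with the fact that $A_p$ is the $L^p$-realisation of $\fa$.)

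The first identity in \eqref{Eq: Representation of weak Stokes} then follows by unwinding definitions. Let $w \in \dom(\mathcal{A}_p) = \Phi^{-1} W^{1,p}_{0,\sigma}(\Omega)$, so that $\Phi w \in W^{1,p}_{0,\sigma}(\Omega) = \dom(A_p^{1/2})$; consequently $A_p^{-1/2} \Phi w \in \dom(A_p)$ and $A_p A_p^{-1/2} \Phi w = A_p^{1/2} \Phi w \in L^p_\sigma(\Omega)$. For an arbitrary $v \in W^{1,p'}_{0,\sigma}(\Omega)$ I then compute
\begin{align*}
 \big\langle \big[A_{p'}^{\frac{1}{2}}\big]^* \Phi^{-1} A_p A_p^{-\frac{1}{2}} \Phi w , v \big\rangle_{W^{-1,p}_\sigma , W^{1,p'}_{0,\sigma}}
 &= \big\langle \Phi^{-1} A_p^{\frac{1}{2}} \Phi w , A_{p'}^{\frac{1}{2}} v \big\rangle_{[L^{p'}_\sigma]^* , L^{p'}_\sigma} \\
 &= \big\langle A_p^{\frac{1}{2}} \Phi w , A_{p'}^{\frac{1}{2}} v \big\rangle_{L^p_\sigma , L^{p'}_\sigma}
 = \int_\Omega \nabla \Phi w \cdot \overline{\nabla v} \; \d x ,
\end{align*}
using in turn the definition of the adjoint $\big[A_{p'}^{1/2}\big]^*$, the defining property of $\Phi$, and \eqref{Eq: form identity}. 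By \eqref{reg1} the right-hand side is $\langle \mathcal{A}_p w , v \rangle_{W^{-1,p}_\sigma , W^{1,p'}_{0,\sigma}}$, and as $v$ was arbitrary the first representation is proved; the chain of equivalences $A_p^{-1/2} \Phi w \in \dom(A_p) \Longleftrightarrow \Phi w \in \dom(A_p^{1/2}) \Longleftrightarrow w \in \Phi^{-1} W^{1,p}_{0,\sigma}(\Omega)$ shows at the same time that the two operators have the same domain.

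For the second identity, observe that with $J := \big[A_{p'}^{1/2}\big]^* \Phi^{-1}$, which is an isomorphism $L^p_\sigma(\Omega) \to W^{-1,p}_\sigma(\Omega)$ by \eqref{Eq: Square root isomorphism}, the asserted second representation is precisely $\mathcal{A}_p = J A_p J^{-1}$, since $J^{-1} = \Phi\big[A_{p'}^{-1/2}\big]^*$. Because $A_p$ is injective (its semigroup being exponentially stable) and $J$ is an isomorphism, it therefore suffices to identify the inner factors, i.e.\ to show $A_p^{-1/2} \Phi = \Phi\big[A_{p'}^{-1/2}\big]^*$ on $\Phi^{-1} L^p_\sigma(\Omega)$; equivalently, that $\Phi^{-1} A_p^{-1/2} f = \big[A_{p'}^{-1/2}\big]^* \Phi^{-1} f$ in $[L^{p'}_\sigma(\Omega)]^*$ for every $f \in L^p_\sigma(\Omega)$. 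Testing against $h \in L^{p'}_\sigma(\Omega)$ and using, respectively, the defining property of $\Phi$, the definition of the adjoint, and the canonical identification of $\Phi^{-1} f$ with an element of $W^{-1,p}_\sigma(\Omega)$, the left-hand side becomes $\langle A_p^{-1/2} f , h \rangle_{L^p_\sigma , L^{p'}_\sigma}$ and the right-hand side becomes $\langle f , A_{p'}^{-1/2} h \rangle_{L^p_\sigma , L^{p'}_\sigma}$; these coincide by the dual square-root identity recorded above. Substituting $A_p^{-1/2} \Phi = \Phi\big[A_{p'}^{-1/2}\big]^*$ into the first representation yields the second.

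The only genuinely delicate point is the preliminary step: one must be sure that the square roots $A_p^{1/2}$ form a consistent family over the admissible range $\lvert 1/p - 1/2 \rvert < 1/6 + \eps$ and that \eqref{Eq: form identity} --- equivalently $(A_p^{1/2})^* = A_{p'}^{1/2}$ --- holds in the $L^p$-scale and not merely on $L^2$. Everything beyond that amounts to careful bookkeeping with the several roles played by the symbol $\Phi$ (isomorphism $[L^{p'}_\sigma]^* \to L^p_\sigma$, canonical inclusion $L^p_\sigma \hookrightarrow W^{-1,p}_\sigma$, and its partial inverse), which must be tracked precisely but raises no real difficulty.
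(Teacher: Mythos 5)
Your treatment of the first identity matches the paper's: the computation with the duality pairings is the same, and your explicit justification of the form identity $\int_\Omega \nabla u \cdot \overline{\nabla v}\,\d x = \langle A_p^{1/2}u, A_{p'}^{1/2}v\rangle$ via consistency of the square-root family is a reasonable way to spell out a step the paper leaves tacit.

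For the second identity, however, there is a genuine gap. You show that $A_p^{-1/2}\Phi$ and $J^{-1} = \Phi[A_{p'}^{-1/2}]^*$ agree \emph{on} $[L^{p'}_\sigma(\Omega)]^*$, i.e.\ on the domain of the first of these two maps. But $J^{-1}$ is defined on all of $W^{-1,p}_\sigma(\Omega)$, which is strictly larger, and the domain of $J A_p J^{-1}$ is by definition $\{w \in W^{-1,p}_\sigma(\Omega) : J^{-1}w \in \dom(A_p)\}$. Agreement on the smaller set shows that $J A_p J^{-1}$ \emph{extends} $\mathcal{A}_p = J A_p A_p^{-1/2}\Phi$, but not that the two operators have the same domain; ``substituting'' one inner factor for the other does not by itself exclude that $J A_p J^{-1}$ has extra elements in its domain lying outside $[L^{p'}_\sigma(\Omega)]^*$. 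Your parenthetical appeal to injectivity of $A_p$ and to $J$ being an isomorphism points in the right direction but does not close this: from $\mathcal{A}_p \subset J A_p J^{-1}$ with both operators injective one cannot conclude equality. What \emph{would} finish the argument is the additional observation that $\mathcal{A}_p$ is already \emph{surjective} onto $W^{-1,p}_\sigma(\Omega)$ (each factor $\Phi$, $A_p^{-1/2}$, $A_p$, $J$ in the first representation is a bijection onto its target, using $0 \in \rho(A_p)$), so a proper injective extension cannot exist. The paper instead settles the matter by a direct computation of the reverse domain inclusion: for $w$ with $\Phi[A_{p'}^{-1/2}]^*w \in \dom(A_p)$ it exhibits $u := A_p^{1/2}\Phi[A_{p'}^{-1/2}]^*w \in W^{1,p}_{0,\sigma}(\Omega)$ with $\Phi^{-1}u = w$, so that $w \in \Phi^{-1}W^{1,p}_{0,\sigma}(\Omega) = \dom(\mathcal{A}_p)$. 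Either supplement would complete your proof; as written, the domain equality is asserted but not established.
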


\begin{proof}
By definition, we find for any $u\in \dom(\mathcal{A}_p)$, i.e., $\Phi u\in W^{1 , p}_{0 , \sigma} (\Omega)$, and $v \in W^{1,p^{\prime}}_{0,\sigma}(\Omega)$ that
\begin{align*}
\big\langle [A^{\frac{1}{2}}_{p'}]^* \Phi^{-1} \circ A_{p} \circ A^{-\frac{1}{2}}_{p} \Phi u, v  \big\rangle_{W^{-1 , p}_{\sigma} , W^{1 , p^{\prime}}_{0 , \sigma}} &= \big\langle \Phi^{-1}  A^{\frac{1}{2}}_{p} \Phi u, A^{\frac{1}{2}}_{p'} v  \big\rangle_{[L^{p^{\prime}}_{\sigma}]^* , L^{p^{\prime}}_{\sigma}} \\
&= \int_{\Omega} A^{\frac{1}{2}}_{p} \Phi u \cdot \overline{A^{\frac{1}{2}}_{p'} v} \; \d x= \int_{\Omega} \nabla \Phi u \cdot \overline{\nabla v} \; \d x,
\end{align*}
where one verifies directly that $\dom(\big[A^{\frac{1}{2}}_{p'} \big]^* \Phi^{-1} \circ A_{p} \circ A^{-\frac{1}{2}}_{p} \Phi) =\dom(\mathcal{A}_p)$.
This proves the first identity.

To prove the second identity, 
notice that 
\begin{align}\label{eq:domAp}
\dom\left(\big[A^{\frac{1}{2}}_{p'} \big]^* \Phi^{-1} \circ A_{p} \circ \Phi \big[A^{-\frac{1}{2}}_{p'} \big]^*\right) = \{w\in W^{-1,p}_{\sigma}(\Omega) \mid \Phi \big[A^{-\frac{1}{2}}_{p'} \big]^*w \in \dom (A_p) \}.
\end{align}
To prove inclusions of domains suppose that $w \in \dom(\mathcal{A}_p)$. Then for any $v\in L^{p^{\prime}}_{\sigma} (\Omega)$
\begin{align}
\label{Eq: Identity for weak Stokes}
\begin{aligned}
 \big\langle \Phi \big[A^{-\frac{1}{2}}_{p'} \big]^*  w , v \big\rangle_{L^{ p}_{\sigma} , L^{p^{\prime}}_{\sigma}}
 &= \big\langle \big[A^{-\frac{1}{2}}_{p'} \big]^*  w , v \big\rangle_{[L^{p^{\prime}}_{\sigma}]^* , L^{p^{\prime}}_{\sigma}} =  \big\langle   w , A^{-\frac{1}{2}}_{p'} v \big\rangle_{W^{-1 , p}_{\sigma} , W^{1 , p^{\prime}}_{0 , \sigma}} \\
 &=
 \big\langle   \Phi w , A^{-\frac{1}{2}}_{p'} v \big\rangle_{L^{ p}_{\sigma} , L^{p^{\prime}}_{\sigma}}
 =  \big\langle  A^{-\frac{1}{2}}_{p} \Phi w ,  v \big\rangle_{L^{ p}_{\sigma} , L^{p^{\prime}}_{\sigma}},
\end{aligned}
\end{align}
and since $ A^{-\frac{1}{2}}_{p} \Phi w \in \dom (A_p)$, it follows that $w$ is in the set~\eqref{eq:domAp}.

The other way round, suppose that $w$ is in the set~\eqref{eq:domAp}. Then for any $v\in W^{1,p^{\prime}}_{0,\sigma}(\Omega)$
\begin{align*}
 \big\langle  w , v \big\rangle_{W^{-1 , p}_{\sigma} , W^{1 , p^{\prime}}_{0 , \sigma}}
 &=   \big\langle \big[A^{\frac{1}{2}}_{p'} \big]^* \big[A^{-\frac{1}{2}}_{p'} \big]^* w , v \big\rangle_{W^{-1 , p}_{\sigma} , W^{1 , p^{\prime}}_{0 , \sigma}} 
 = \big\langle \Phi \big[A^{-\frac{1}{2}}_{p'} \big]^* w , A^{\frac{1}{2}}_{p'}v \big\rangle_{L^{ p}_{\sigma} , L^{p^{\prime}}_{\sigma}} \\
&= \big\langle A^{\frac{1}{2}}_{p} \Phi \big[A^{-\frac{1}{2}}_{p'} \big]^* w , v \big\rangle_{L^{ p}_{\sigma} , L^{p^{\prime}}_{\sigma}} 
= \big\langle \Phi^{-1}A^{\frac{1}{2}}_{p} \Phi \big[A^{-\frac{1}{2}}_{p'} \big]^* w , v \big\rangle_{W^{-1 , p}_{\sigma} , W^{1 , p^{\prime}}_{0 , \sigma}}.
\end{align*}
By assumption $\Phi \big[A^{-\frac{1}{2}}_{p'} \big]^* w\in \dom (A_p)$, hence, setting $u = A^{\frac{1}{2}}_{p} \Phi \big[A^{-\frac{1}{2}}_{p'} \big]^* w$, one finds $u\in W^{1 , p}_{0, \sigma} (\Omega)$ with $\Phi u = w$, whence the equality of the domains follows. \par
Finally, the representation~\eqref{Eq: Representation of weak Stokes} follows from the identity~\eqref{Eq: Identity for weak Stokes} together with the first identity established in this proof.
\end{proof}

Since $\mathcal{A}_{p}$ is related to $A_p$ by a similarity transform, 
we can carry 
spectral properties of $A_p$ over to $\mathcal{A}_p$, and we obtain the following proposition as a corollary of Lemma~\ref{Lem: Representation of weak Stokes}, compare e.g.~\cite{DHP}. 

\begin{proposition}
\label{Prop: Analyticity of weak Stokes}
Let $\Omega \subset \R^3$ be a bounded Lipschitz domain. Then there exists $\eps > 0$ such that for $\lvert 1 / p - 1 / 2 \rvert < 1 / 6 + \eps$, it holds $\rho(A_p) = \rho(\mathcal{A}_p)$, 
\begin{itemize}
\item[(a)] $- \mathcal{A}_p$ generates a bounded analytic semigroup on $W^{-1 , p}_{\sigma} (\Omega)$, and for $u \in W^{-1 , p}_{\sigma} (\Omega)$ and $f \in L^p_{\sigma} (\Omega)$ the following two identities hold
\begin{align*}
 \mathrm{(1)} \quad e^{- t \mathcal{A}_p} u = \big[A^{\frac{1}{2}}_{p'}\big]^*\Phi^{-1}  e^{- t A_p} \Phi \big[ A_{p^{\prime}}^{- \frac{1}{2}} \big]^* u, \qquad \qquad \mathrm{(2)} \quad \Phi^{-1} e^{- t A_p} f = e^{- t \mathcal{A}_p} \Phi^{-1} f;
\end{align*}
\item[(b)] $\mathcal{A}_p$ has the maximal regularity property.
\end{itemize}
\end{proposition}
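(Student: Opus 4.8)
The plan is to read off all three assertions from the fact, established in Lemma~\ref{Lem: Representation of weak Stokes}, that $\mathcal{A}_p$ is a similarity transform of the strong Stokes operator $A_p$. Concretely, set $S := \big[A^{\frac{1}{2}}_{p'}\big]^* \Phi^{-1}$. By~\eqref{Eq: Square root isomorphism} this is an isomorphism $S \in \text{Isom}(L^p_\sigma(\Omega), W^{-1,p}_\sigma(\Omega))$ with inverse $S^{-1} = \Phi\big[A^{-\frac{1}{2}}_{p'}\big]^*$, and the second representation in~\eqref{Eq: Representation of weak Stokes} reads $\mathcal{A}_p = S A_p S^{-1}$, with $\dom(\mathcal{A}_p) = S\,\dom(A_p)$. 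Note also that $\lvert 1/p - 1/2\rvert = \lvert 1/p' - 1/2\rvert < 1/6 + \eps$, so all results of Section~\ref{Sec: Preliminaries} are available for $p'$ as well.

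First I would record the elementary consequences of a similarity transform: $\lambda - \mathcal{A}_p = S(\lambda - A_p)S^{-1}$ is invertible precisely when $\lambda - A_p$ is, so $\rho(\mathcal{A}_p) = \rho(A_p)$ and $(\lambda - \mathcal{A}_p)^{-1} = S(\lambda - A_p)^{-1}S^{-1}$. Hence the sectoriality of $-A_p$ and the resolvent bound $\lVert \lambda(\lambda - A_p)^{-1}\rVert \le M$ on a sector strictly larger than the right half-plane, both furnished by Proposition~\ref{Prop: Shen's theorem}, carry over to $\mathcal{A}_p$ up to the factor $\lVert S\rVert\,\lVert S^{-1}\rVert$. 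Therefore $-\mathcal{A}_p$ generates a bounded analytic semigroup given by $e^{-t\mathcal{A}_p} = S\,e^{-tA_p}\,S^{-1}$, and substituting the definitions of $S$ and $S^{-1}$ yields identity~(1) verbatim.

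For identity~(2) the key algebraic fact is the intertwining relation $\Phi\big[A^{-\frac12}_{p'}\big]^* = A^{-\frac12}_p \Phi$ on $[L^{p'}_\sigma(\Omega)]^*$, which follows by the same dualisation used in the proof of Lemma~\ref{Lem: Representation of weak Stokes}: for $v \in L^{p'}_\sigma(\Omega)$ one has $\langle \Phi[A^{-1/2}_{p'}]^* w, v\rangle = \langle w, A^{-1/2}_{p'} v\rangle = \langle \Phi w, A^{-1/2}_{p'} v\rangle = \langle A^{-1/2}_p \Phi w, v\rangle$, using the duality $\langle A^{1/2}_p \cdot, \cdot\rangle = \langle \cdot, A^{1/2}_{p'} \cdot\rangle$. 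Consequently $S^{-1}\Phi^{-1} = A^{-1/2}_p$ on $L^p_\sigma(\Omega)$ and, equivalently, $S\,A^{-1/2}_p = \Phi^{-1}$; inserting these into $e^{-t\mathcal{A}_p}\Phi^{-1}f = S e^{-tA_p} S^{-1}\Phi^{-1}f = S A^{-1/2}_p e^{-tA_p} f = \Phi^{-1}e^{-tA_p}f$, where $A^{-1/2}_p$ commutes with $e^{-tA_p}$ as usual. Finally, part~(b) is a soft consequence of the similarity: maximal regularity is invariant under conjugation by an isomorphism of Banach spaces, so it transfers from $A_p$, which has maximal regularity by Proposition~\ref{Prop: Maximal regularity of strong operators}(a), to $\mathcal{A}_p$; compare~\cite{DHP}.

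The main obstacle I anticipate is purely bookkeeping: keeping the various duality identifications — $\Phi$, the antidual pairings, and the adjoints of the square roots — straight, so that the intertwining relations and the equality of domains are genuinely correct rather than only formal. Once $\mathcal{A}_p = S A_p S^{-1}$ is in hand there is no analytic difficulty: everything is a transport of structure along the isomorphism $S$.
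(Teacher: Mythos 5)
Your proof is correct and takes the same route the paper does: treating $\mathcal{A}_p = S A_p S^{-1}$ with $S = [A^{1/2}_{p'}]^*\Phi^{-1}$ (Lemma~\ref{Lem: Representation of weak Stokes}) and transporting sectoriality, the semigroup formula, and maximal regularity along the similarity, with the intertwining $\Phi[A^{-1/2}_{p'}]^* = A^{-1/2}_p\Phi$ yielding identity~(2). The paper itself merely asserts the proposition as a corollary of that lemma (with a pointer to~\cite{DHP}), so your write-up supplies exactly the omitted details.
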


Now, we come to the proof of Theorem~\ref{Regularity}. 

\begin{proof}[Proof of Theorem~\ref{Regularity}]
Fix $3 \leq p < 3 + \eps$ and let $u$ and $d$ be mild solutions corresponding to Theorem~\ref{thm:main} on $[0 , T)$. In the following, we show that $\Phi^{-1} u$ and $d$ are mild solutions to the (weak) linear Stokes and heat equations with the respective right-hand sides
\begin{align*}
F_u= - \IP \mathrm{div} (u \otimes u + [\nabla d]^{\top} \nabla d) \quad \text{and} \quad F_d=- (u \cdot \nabla) d + \lvert \nabla d \rvert^2 d.
\end{align*}
Since for $d$ this has been proven in Subsection~\ref{Subsec: Retrieving}, we concentrate on $\Phi^{-1} u$. Recall that $e^{- t A_{p / 2}} \IP \divergence$, for $t>0$, \textit{a priori} defines a bounded operator on a dense subset of $L^{p/2}(\Omega)^{3\times 3}$ containing $C_c^{\infty} (\Omega)^{3 \times 3}$, and its closure defines a bounded operator from $L^p(\Omega)^{3 \times 3}$ to $L^p_{\sigma} (\Omega)$.

Furthermore, $\IP \divergence F$ for $F \in L^{p / 2}(\Omega)^{3 \times 3}$ is identified with an element in $W^{-1 , p}_{\sigma}(\Omega)$ by
\begin{align*}
 \big\langle  \IP \divergence F, v \big\rangle_{W^{-1 , p}_{\sigma} , W^{1 , p^{\prime}}_{0 , \sigma}}
 =\big\langle F , \nabla v \big\rangle_{L^{ p} , L^{p^{\prime}}}, \quad v \in W^{1 , p^{\prime}}_{0 , \sigma}(\Omega).
\end{align*}
Combining this together with Proposition~\ref{Prop: Analyticity of weak Stokes}, $\Phi^{-1} u$ satisfies
\begin{align*}
 \Phi^{-1} u(t) = e^{- t \mathcal{A}} \Phi^{-1} a - \int_0^t \e^{- (t - s) \mathcal{A}} \IP \divergence (u \otimes u + [\nabla d]^{\top} \nabla d) \; \d s.
\end{align*}
Now, since by Theorem~\ref{thm:main}
\begin{align*}
 t &\mapsto e^{\frac{\omega t}{2}} u(t) \in BC([0 , T) ; L^p_{\sigma} (\Omega)), \quad 
t \mapsto d(t) \in BC([0 , T) ; L^\infty (\Omega)^{3}), \\
 t &\mapsto e^{\frac{\omega t}{2}} \nabla d(t) \in BC([0 , T) ; L^p (\Omega)^{3\times 3})
\end{align*}
for some $\omega > 0$, we deduce that for all $s\in [1,\infty]$ we have
\begin{align*}
F_u&= - \IP \mathrm{div} (u \otimes u + [\nabla d]^{\top} \nabla d) \in L^{s} (0 , T ; W^{-1 , \frac{p}{2}}_{0 , \sigma} (\Omega)), \\
F_d&= - (u \cdot \nabla) d + \lvert \nabla d \rvert^2 d \in L^{s} (0 , T ; L^{\frac{p}{2}} (\Omega)^3).
\end{align*}
Now, if for some $1 < s < \infty$ the initial conditions $\Phi^{-1} a$ and $b$ satisfy
\begin{align}
\label{Eq: Condition for initial values}
 \Phi^{-1} a \in \big( W^{-1 , \frac{p}{2}}_{\sigma} (\Omega) , \Phi^{-1} W^{1 , \frac{p}{2}}_{0 , \sigma} (\Omega) \big)_{1 - \frac{1}{s} , s} \quad \text{and} \quad b \in (L^{\frac{p}{2}} (\Omega)^3 , \dom(B_{\frac{p}{2}}))_{1 - \frac{1}{s} , s},
\end{align}
then the maximal regularity of $\mathcal{A}_{\frac{p}{2}}$ (see Proposition~\ref{Prop: Analyticity of weak Stokes}) and $B_{\frac{p}{2}}$ (see Proposition~\ref{Prop: Maximal regularity of strong operators}) implies that $\Phi^{-1} u$ and $d$ satisfy
\begin{align}
\label{Eq: Maximal regularity classes for solutions}
\begin{aligned}
 &\Phi^{-1} u \in W^{1 , s} (0 , T ; W^{-1 , \frac{p}{2}}_{\sigma} (\Omega)) \cap L^s (0 , T ; \Phi^{-1} W^{1 , \frac{p}{2}}_{0 , \sigma} (\Omega)), \\
 & d^{\prime} , B_{\frac{p}{2}} d \in L^s (0 , T ; L^{\frac{p}{2}} (\Omega)^3)
\end{aligned}
\end{align}
and that they solve the respective equations~\eqref{eq:PLCD}
for almost every $0 < t < T$. Thus, in order to gain this regularity property, it remains to prove~\eqref{Eq: Condition for initial values}. For $b$, we have by Proposition~\ref{Prop: Square root properties} followed by~\cite[Cor.~6.6.3]{Haase}, and~\cite[p.~25]{Triebel1978}
\begin{align*}
 b \in W^{1 , p} (\Omega)^3 \subset W^{1 , \frac{p}{2}} (\Omega)^3 = \dom(B^{\frac{1}{2}}_{p / 2}) \subset \big(L^{\frac{p}{2}} (\Omega)^3 , \dom(B_{\frac{p}{2}}) \big)_{\frac{1}{2} , \infty} \subset \big(L^{\frac{p}{2}} (\Omega)^3 , \dom(B_{\frac{p}{2}}) \big)_{1 - \frac{1}{s} , s}
\end{align*}
for any $1 < s < 2$. For the weak Stokes operator a similar calculation works on the $L^{\frac{p}{2}}_{\sigma}$-scale instead of the $W^{1 , \frac{p}{2}}$-scale once we know that $\dom(\mathcal{A}_{p / 2}^{\frac{1}{2}}) = \Phi^{-1} L^{\frac{p}{2}}_{\sigma} (\Omega)$. Notice that since $0 \in \rho(A_{\frac{p}{2}})$ we find by Proposition~\ref{Prop: Analyticity of weak Stokes} that $0 \in \rho(\mathcal{A}_{\frac{p}{2}})$. Thus, by definition
\begin{align*}
 \dom(\mathcal{A}_{p / 2}^{\frac{1}{2}}) = \mathrm{Rg} (\mathcal{A}_{p / 2}^{- \frac{1}{2}})
\end{align*}
and the special structure of the similarity transform proven in Lemma~\ref{Lem: Representation of weak Stokes} shows that $$\dom(\mathcal{A}_{p / 2}^{\frac{1}{2}})= \Phi^{-1}(L^{\frac{p}{2}}_{\sigma}(\Omega))=[L^{(\frac{p}{2})^{\prime}}_{\sigma} (\Omega)]^*.$$
Finally, we deduce by the same reasoning as for the Laplacian that
\begin{align*}
 \Phi^{-1} a \in [L^{p^{\prime}}_{\sigma} (\Omega)]^* \subset [L^{(\frac{p}{2})^{\prime}}_{\sigma} (\Omega)]^* = \dom(\mathcal{A}_{p / 2}^{\frac{1}{2}}) &\subset \big(W^{-1 , \frac{p}{2}}_{\sigma} (\Omega) , \Phi^{-1} W^{1 , \frac{p}{2}}_{0 , \sigma} (\Omega) \big)_{\frac{1}{2} , \infty} \\
 &\subset \big(W^{-1 , \frac{p}{2}}_{\sigma} (\Omega) , \Phi^{-1} W^{1 , \frac{p}{2}}_{0 , \sigma} (\Omega) \big)_{1 - \frac{1}{s} , s},
\end{align*}
for any $1 < s < 2$. This proves~\eqref{Eq: Condition for initial values}.
\end{proof}

\section{Proofs of Theorems~\ref{Dirichlet-main} and~\ref{Dirichlet-regularity}}
\label{Sec: Dirichlet boundary conditions}
In this section, we discuss the existence, uniqueness, and regularity of mild solutions when the Dirichlet boundary data for the director field is a constant vector $e$. \par
Let us denote $\delta= d-e $. Then, the system~\eqref{eq:LCD} is equivalent to
\begin{equation}
\left\{
\begin{aligned}
\partial_{t} u + (u\cdot \nabla) u - \Delta u + \nabla \pi &= - \text{div}([\nabla \delta]^{\top} \nabla \delta) \quad &&\text{in} \ (0,T)\times \Omega,\\
\partial_{t} \delta - \Delta \delta + (u \cdot \nabla) \delta &= \vert \nabla \delta \vert^{2} \delta+ \vert \nabla \delta \vert^{2} e \quad &&\text{in} \ (0,T)\times \Omega, \\
\text{div}\ u &=0 \quad &&\text{in} \ (0,T)\times \Omega,\\
\lvert \delta + e \rvert &= 1 &&\text{in} \ (0 , T) \times \Omega, \\
(u, \delta) &= (0,0)  &&\text{on} \ (0,T)\times \partial \Omega, \\
(u,\delta)\Big\vert_{t=0} &= (a, \tilde{b})  &&\text{in}\ \Omega,
\end{aligned}
\right.
\label{model3}
\end{equation}
where $\tilde{b}=b-e $. We would like to emphasise that the system~\eqref{model3} in $(u,\delta)$ has homogeneous Dirichlet boundary conditions and $\nabla \delta=\nabla d $. \par
Dropping the condition $\lvert \delta + e \rvert = 1$ for a moment, we reformulate the problem  as 
\begin{align*}
\left\{
\begin{array}{rll}
\partial_t u   + A u  & = - \PP (u \cdot \nabla) u - \PP \div ([\nabla \delta]^{\top} \nabla \delta ), \quad &\text{ in } \ (0,T) \times \Omega,  \\
\partial_t \delta + B \delta  & = - (u \cdot \nabla) \delta +  \abs{\nabla \delta}^2 (\delta+e), \quad &\text{ in } \  (0,T) \times \Omega, \\
\end{array}\right.
\end{align*}
which defines a system in the space
\begin{align*}
L^q_{\sigma}(\Omega) \times L^q (\Omega)^3,
\end{align*}
where $B$ now denotes the negative Dirichlet Laplacian which is defined similarly to the Neumann Laplacian using the form
\begin{align*}
\fb : W_0^{1 , 2} (\Omega)^3 \times W_0^{1 , 2} (\Omega)^3 \to \IC, \quad &(u , v) \mapsto \int_{\Omega} \nabla u \cdot \overline{\nabla v} \; \d x.
\end{align*} 
Note that due to the availability of heat kernel estimates~\cite[Cor.~3.2.8]{Davies} and the validity of the square root property~\cite[Thm.~7.5]{Jerison_Kenig_Dirichlet} the counterparts of Propositions~\ref{Prop: Square root properties} and~\ref{prop:smoothing} are valid for the Dirichlet Laplacian. Especially, the $L^p$-$L^q$-estimates hold on all of $L^p(\Omega)^3$, i.e., average free spaces need not be considered. Furthermore, the maximal regularity of the negative Dirichlet Laplacian follows by~\cite[Cor.~1.1]{Lamberton}. \par
Denoting the nonlinear terms as 
\begin{align*}
F_u(u, \nabla \delta) &= - \PP \div (u \otimes u + [\nabla \delta]^{\top} \nabla \delta), \\
F_\delta(u, \nabla \delta, \delta)  &= - (u \cdot \nabla) \delta + \abs{\nabla \delta}^2 (\delta+ e ),
\end{align*}
we define the iteration scheme corresponding to the mild formulation~\eqref{eq:mildDirichlet} as follows. For $j\in \N_0$, define
\begin{eqnarray*}
u_0 :=e^{-tA}a,& & u_{j+1}  := u_0 + \int_0^t e^{-(t-s)A} F_u(u_j(s), \nabla \delta_j(s)) \; \d s,\\
\delta_{0} :=e^{-t B} \tilde{b},& & \delta_{j+1}  := \delta_0 + \int_0^t e^{-(t-s)B} F_\delta(u_j(s), \nabla \delta_j(s), \delta_j(s)) \; \d s.
\end{eqnarray*}
The analysis towards the proof of existence and uniqueness follows verbatim the proof for the case of Neumann boundary conditions since $\abs{e}=1 $ (which replaces the $\overline{b} $ in the previous case). Also note that in this case we do not need to split the equation for the director field as the Dirichlet Laplacian generates an exponentially stable semigroup on all of $L^q (\Omega)^3$. \par
Once the existence and uniqueness of $u$ and $\delta$ have been established, we can then return to the original variable $d=\delta+e $ and retrieve the condition $\abs{d}=1 $ by following the same arguments as in the previous case and by noting that $\vert d \vert^2-1=\vert e \vert^2 -1 =0 $ on the boundary $(0,T) \times \partial \Omega $. Finally, the discussion in Section~\ref{Sec: Regularity} stays literally the same.

\section{Concluding remarks}
\label{Sec: Comparison}
We would like to conclude by discussing other results and techniques.

First, in the case of a smooth domain $\Omega$ our approach yields similar results as has been obtained by Hieber \textit{et al.\@} in~\cite{Hieber_et_al} using quasilinear techniques. 
More concretely, the approach in~\cite{Hieber_et_al} requires initial data in Besov spaces
\begin{align*}
a\in B^{2\mu-2/p}_{qp}(\Omega)^3 \cap L^p_{\sigma}(\Omega), \quad
b\in B^{2\mu-2/p}_{qp}(\Omega)^3 , \quad \tfrac{2}{p}+\tfrac{3}{q}<1, \tfrac{1}{2}+ \tfrac{1}{p}+ \tfrac{3}{2q}<\mu\leq 1,
\end{align*}
using the fact that the embedding $B^{2\mu-2/p}_{qp}(\Omega)\hookrightarrow C^1(\overline{\Omega})$ holds. These initial data are much more regular than the ones assumed by us.

Recall that for $\Omega$ smooth, our results are valid for $3\leq p < q <\infty$, cf.\@ Remark~\ref{rem:smooth}. 
Now suppose that we have initial data $(a,d )\in W^{1,\frac{p}{2}}_{\sigma}(\Omega) \times W^{1,p}(\Omega)^3$ for some $p>9$. This choice ensures that $a$ and $d$ are bounded functions. 
Now we can repeat our arguments as before (see Section~\ref{Sec: Regularity}) to first obtain that $B_{\frac{p}{2}} d \in L^s(0,T; L^{\frac{p}{2}} (\Omega)^3)$ and then $u \in L^s(0,T; W^{1,\frac{p}{2}}_{0,\sigma}(\Omega))$ for $s \in (1,2)$. Since $\dom(B_{\frac{p}{2}})$ is a subspace of $W^{2 , \frac{p}{2}} (\Omega)^3$ for smooth domains, it is possible to control two derivatives of $d$. But now, since $\| u \|_{L^{p}_{\sigma}}$ and $\| \nabla d \|_{L^p}$ are also bounded in time, we observe that the right-hand side in the fluid equation is actually in $L^s(0,T; L^{\frac{p}{3}}_{\sigma}(\Omega)) $ and hence we infer that
\begin{align*}
 u \in W^{1 , s} (0 , T ; L^{\frac{p}{3}}_{\sigma} (\Omega)) \cap L^s (0 , T ; \dom(A_{\frac{p}{3}})), \qquad d^{\prime} , B_{\frac{p}{2}} d \in L^s (0 , T ; L^{\frac{p}{2}} (\Omega)^3)
\end{align*}
and thus $u$ and $d$ are strong solutions. Especially, $u(t) \in \dom(A_{\frac{p}{3}})$ and $d(t) \in \dom(B_{\frac{p}{2}})$ for almost every $t \in (0 , T)$ so that both lie in $C^1(\overline{\Omega})$ in the spatial variables as used in~\cite{Hieber_et_al}.

Note that there are also other versions of the simplified Ericksen--Leslie model. For instance, some authors, see, e.g.,~\cite{Lin_Liu, HuWang2010} and the references therein, drop the assumption $\abs{d}=1$ and replace the dynamical equation for the  director field $d$ by
\begin{align*}
\partial_{t} d - \Delta d +(u \cdot \nabla) d =  -\gamma f(d), \quad \gamma>0,
\end{align*}
for a bounded vector valued penalty function $f$. In particular, Hu and Wang considered in~\cite{HuWang2010} the case $f=0$.  The method we presented here can be adapted for this setting as well.

Finally, we would like to remark that our approach, based on the iteration scheme, has been crucially based upon the fact that the right-hand side (nonlinearities) in the fluid equation can be written in a divergence form. Since the same remains true for more general models arising in nematic liquid crystals, we are hopeful that this method shall turn out to be fruitful even in such general situations.

\subsection*{Acknowledgement} 
We would like to thank Matthias Hieber for introducing us to this interesting field of research.

\end{document}